\DeclareMathAlphabet{\mathpzc}{OT1}{pzc}{m}{it}
\def\BT{\mathfrak{B}(\mathbb T)}
\newtheorem{theorem}{Theorem}[section]
\newtheorem{lemma}[theorem]{Lemma}
\newtheorem{proposition}[theorem]{Proposition}
\newtheorem{remark}[theorem]{Remark}
\newtheorem{definition}[theorem]{Definition}
\title{Agler-Commutant Lifting on an Annulus}
\author[S.~McCullough and S. ~Sultanic]
{Scott McCullough$^*$ and Saida Sultanic}
\address{Department of Mathematics\\
  University of Florida\\
  Box 118105\\
  Gainesville, FL 32611-8105\\
  USA}
\address{Sarajevo School Of Science and Technology\\
Bistrik 7\\
Sarajevo 71000 \\
Bosnia-Herzegovina}
\email{sam@math.ufl.edu}
\email{saida.sultanic@ssst.edu.ba}
\subjclass{47A20 (Primary), 47A48, 47A57, 47B32 (Secondary)}
\keywords{commutant lifting, Agler-Schur class, annulus }
\thanks{${}^*$ Research supported by NSF grants 0457504 and 0758306}
\begin{document}

\begin{abstract}
 This note presents a  commutant lifting theorem (CLT)
 of Agler type for the annulus $\mathbb A$.  Here the
 relevant set of test functions are the
 minimal inner functions on $\mathbb A$ - those
 analytic functions on $\mathbb A$
 which are unimodular on the boundary and have exactly
 two zeros in $\mathbb A$ - and the model
 space is determined by a distinguished member
 of the Sarason family of kernels over $\mathbb A$.
 The ideas and constructions borrow
 freely from  the CLT of
 Ball, Li, Timotin, and Trent \cite{BLTT}
 and Archer \cite{Archer} for the polydisc,
 and Ambrozie and Eschmeier for the ball in $\mathbb C^n$
 \cite{AE}, as well as
 generalizations of the de Branges-Rovnyak
 construction like found in Agler \cite{Ag}
 and Ambrozie, Englis, and M\"uller \cite{AEM}.
 It offers a template for extending the result result in \cite{MS}
 to infinitely many test functions.
 Among the needed new ingredients is the formulation of
 the factorization implicit in the statement
 of the results in \cite{BLTT}, \cite{Archer}
  and \cite{MS} in terms of certain
 functional Hilbert spaces of Hilbert space
 valued functions.
\end{abstract}

\maketitle


\thispagestyle{empty}

\section{Introduction}
 \label{sec:intro}
  Results going back to \cite{Ag} and including
  \cite{AM}, \cite{BT}, \cite{BTV},  \cite{Am} \cite{AEM}, \cite{AE}
  \cite{DMM}, \cite{DM} among others  view
  the starting point for  Agler-Pick interpolation
  as a
  collection of functions $\Psi,$ called test functions.
  \index{test functions} Roughly speaking
  one  constructs an operator algebra whose norm is as
  large as possible
  subject to the condition that each $\psi\in \Psi$
  is contractive. The corresponding Agler-Schur class,
  or $\Psi$-Agler-Schur class, is then the unit ball of this
  operator algebra and interpolation is within this class.

  The by now classical example is that of
  Agler-Pick interpolation in the $d$-fold polydisc
  $\mathbb D^d\subset \mathbb C^d$ with $\Psi=\{z_1,\dots,z_d\},$ where
  the $z_j$ are the coordinate functions \cite{Ag}\cite{AM}. In this case
  the unit ball of the  resultant
  operator  algebra of functions on $\mathbb D^d$
  is known  as the Agler-Schur class, often denoted
  $\mathcal S_d$. For $d=1,2$ this operator algebra
  is the same as $H^\infty (\mathbb D^d)$, but generally
  $\mathcal S_d$ and $H^\infty(\mathbb D^d)$
  are different. The literature contains many articles
  on the Agler-Schur class and its operator-valued generalizations.
  A sample of references include \cite{BB} \cite{BTV}\cite{Jury}\cite{AMb}.
  Of special relevance for this paper is the work
  of Ambrozie \cite{Am} and the subsequent articles
  \cite{DMM} and \cite{DM}, where the set of test functions $\Psi$
  is allowed to be infinite with a compact Hausdorff topology.

  It has long been known that Pick interpolation is
  a special case of commutant lifting  \cite{S}
   \cite{DMP} \cite{FF} \cite{Sz-N}.
  In this spirit Ball, Li,  Timotin, and
  Trent \cite{BLTT} formulate  and prove an Agler-Pick
  type commutant lifting theorem for the  polydisc.
  Significant
  refinements of both the statements and proofs of this result
  appear in the work of Archer \cite{Archer}.
  Ambrozie and Eschmeier \cite{AE} establish a related
  CLT for the unit ball in $\mathbb C^n.$ 
  In \cite{MS} we establish a generalization of these
  results to the case of a finite collection $\Psi$
  together with a distinguished reproducing kernel
  Hilbert space $H^2(k)$, unlocking the prior tight
  connection between the coordinate (test)
  functions $\{z_1,\dots,z_d\}$ and the  kernel
   $k$ for the  Hardy space
   $H^2(\mathbb D^d)$ in the case of the polydisc.
  In this more general context,
  the lack of an orthonormal basis explicitly
  expressible in terms of the test functions
  necessitated a number of innovations.

  In this article we pursue an Agler-Pick  type commutant lifting
  theorem with $\Psi$ the infinite collection of
  minimal inner functions on an annulus $\mathbb A$ - those with
  unimodular boundary values and exactly two zeros inside -
  and $H^2(k)$ a distinguished choice of Hardy Hilbert space
  on $\mathbb A$ - distinguished by the fact that $k(z,w)$
  is the only Sarason kernel for $\mathbb A$ which
  does not vanish for  $(z,w)\in \mathbb A\times \mathbb A$.
  In addition to certain measure theoretic considerations
  necessitated by the infinite collection of test functions,
  it also turns out that some structures not apparent or exploited
  in the case of finite test functions become important.
  We have borrowed
  freely from \cite{BLTT}, \cite{Archer}, \cite{AE}, 
  \cite{Ag} \cite{AEM} and of course \cite{MS}. 

  We thank the referee for many substantive suggestions which
  markedly improved the exposition. 

\section{Preliminaries and Main Result}
 Fix $0<q<1$ and let  $\mathbb A$ denote the annulus
 $\{z\in \mathbb C: q<|z|<1\}$.  The boundary of 
  the annulus comes in two parts, the outer boundary $B_0=\{|z|=1\}$
 and the inner boundary $B_1=\{|z|=q\}$.  As is customary, $\mathbb D$
  denotes the unit disc.  \index{outer boundary} \index{inner boundary}

\subsection{The test functions}
 \label{subsec:test-functions}
 The minimal inner functions \index{minimal inner function}
 on $\mathbb A$ are those (non-constant)
 analytic functions 
  $\phi:\mathbb A\to \mathbb D$ 
 whose boundary values are unimodular and 
 have the minimum number of zeros - two -  in $\mathbb A$. Up to canonical
 normalizations, they can be parametrized by the unit circle.

   If $\psi:\mathbb A \to \mathbb D$
 is a minimal inner function normalized by $\psi(\sqrt{q})=0$ 
  and $\psi(1)=1$, then 
  the second zero $w$ of $\psi$ must lie on the circle
 $\mathbb T=\{z:|z|=\sqrt{q} \}$ (see Section \ref{sec:more-test-functions}).
   Conversely, if $w$ is a point
 on this circle $\mathbb T$, then there is a (uniquely determined) 
 minimal inner function $\psi_w$  with $\psi_w(\sqrt{q})=0=\psi_w(w)$ 
 normalized by $\psi_w(1)=1$. In the case $w=\sqrt{q}$,
 this zero has multiplicity two.   Hence, 
 letting $\Psi=\{\psi_w:w\in\mathbb T\}\subset H^\infty(A)$, \index{$\Psi$}
 there is
 a canonical bijection $\mathbb T\to \Psi$ given by
 $w\mapsto \psi_w$ which turns 
 out to be a homeomorphism.

 For $z\in\mathbb A,$ let $E(z)$  denote the
 corresponding point evaluation on $\Psi$.
 Thus $E(z):\Psi\to\mathbb D$ is the  continuous function defined by
 $E(z)(\psi)=\psi(z).$ \index{$E(z)$}

\subsection{Transfer functions and the Schur class}
 \label{subsec:transfer}
   In the test function approach to interpolation and commutant lifting, 
   those functions built from the test functions
   as a transfer function of a unitary colligation play
   a key role and are known as 
   Agler-Schur class functions.  
   \index{unitary colligation}

 \begin{definition}\rm
  \label{def:colligation}
   A {\it $\Psi$-unitary colligation} is a tuple \index{unitary colligation}
   \index{$\Psi$-unitary colligation}
  $\Sigma=(\rho,A,B,C,D,\mathcal E,\mathcal H)$ where
 \begin{itemize}
  \item[(i)] $\mathcal E$ and $\mathcal H$ are Hilbert spaces;
  \item[(ii)] $\rho:C(\mathbb T)\to \mathcal B(\mathcal E)$ is
    a unital representation; and
  \item[(iii)] the block operator
    \begin{equation*}
      U=\begin{pmatrix} A & B\\ C& D\end{pmatrix}:
       \begin{matrix} \mathcal E \\ \oplus \\ \mathcal H \end{matrix} \to
        \begin{matrix} \mathcal E \\ \oplus \\ \mathcal H \end{matrix}
    \end{equation*}
   is unitary.
 \end{itemize}

   The corresponding {\it transfer function} \index{transfer function}
   is the function
   on $\mathbb A$ with values in $\mathcal B(\mathcal H)$ given
   by
  \begin{equation*}
   W_\Sigma = D+ C(I-Z A)^{-1}ZB,
  \end{equation*}
   where $Z:\mathbb A\to \mathcal B(\mathcal E)$ is the function
   $\rho(E(z))$.
\end{definition}

  The collection $\mathcal S(\mathbb A,\mathcal H)$ of
  functions $F:\mathbb A\to \mathcal B(\mathcal H)$ with a
  transfer function representation is called the
  Schur-Agler class\index{Schur-Agler class}.
  It coincides with the usual unit ball of
  $H^\infty(\mathbb A)$ for scalar-valued functions \cite{DM}
  $\mathcal H=\mathbb C$).
  We believe that, using Agler's rational dilation
  theorem \cite{Ag} and arguments like those in
  \cite{DM} or those of \cite{DMrat}, the same is true for operator-valued
  $H^\infty(\mathbb A),$ but postpone further consideration
  of this issue.

\subsection{A Hardy space of the annulus}
 \label{subsec:hardy-space}
  Results of Sarason \cite{S}, Abrahamse and Douglas \cite{AD}, and 
  Abrahamse \cite{Ab} among others identify a certain
  one parameter family of Hardy Hilbert spaces over the annulus
  which, collectively, play  the same role for $\mathbb A$  as 
  the classical Hardy space plays for $\mathbb D$. 

  For $t>0$, let $\mu_t$
  denote the measure on the boundary of $\mathbb A$ which is
  the usual normalized arclength measure on the outer boundary
  $B_0$ (so that $\mu_t(B_0)=1)$,
  but is $t$ times normalized  arclength measure on the inner boundary
  $B_1$ (so that $\mu_t(B_1)=t$).
  Let $H^2_t=H^2_t(\mathbb A)$ denote the Hardy
  Hilbert space obtained by closing up functions analytic in
  a neighborhood of the closure of $\mathbb A$ in $L^2(\mu_t)$.

  It is straightforward to check that the set
 \begin{equation}
  \label{eq:zeta-basis}
   \zeta_n= \frac{z^n}{\sqrt{1+tq^{2n}}}, \ \ \ n\in\mathbb Z,
 \end{equation}
   is an orthonormal basis for $H^2_t$. In particular,
 \begin{equation}
  \label{eq:defkt}
   k(z,w;t)=\sum_{n\in\mathbb Z} \frac{(zw^*)^n}{1+tq^{2n}}
 \end{equation}
  is the reproducing kernel for $H^2_t$.

   Each $\varphi \in H^\infty(\mathbb A)$ determines an operator $M_t(\varphi)$
   of multiplication by $\varphi$ on $H^2_t$ whose adjoint satisfies
 \begin{equation*}
   M_t(\varphi)^* k(\cdot,w;t)=\varphi(w)^* k(\cdot,w;t).
 \end{equation*}
 
   From equation \eqref{eq:defkt}, it is evident that
   $U: H^2_{q^2t} \mapsto H^2_{t}$ given by
   $U f= zf$ is unitary. It also intertwines  $M_{tq^2}$ and $M_t$; i.e.,
   $UM_{tq^2}(\varphi)=M_t(\varphi)U$. Modulo this equivalence,
   the collection $(H^2_t,M_t)$ is a family of representations
   of $H^\infty(\mathbb A)$  parametrized by the unit
   circle.  Up to unitary equivalence, these
   are Sarason's Hardy spaces of the annulus 
  \cite{S} that appear in \cite{Ab}.  They
   are also, over $\mathbb A$,  the rank one bundle shifts of
   Abrahamse and Douglas \cite{AD}.

   The kernel functions $k(z,w;t)$ have theta function representations from
   which the proposition below follows. 
   From here on, let $k(z,w)=k(z,w;1)$ and $H^2(\mathbb A)=H^2_{1}(\mathbb A)$.
   \index{$k(z,w)$} This is
   our distinguished Hardy space and its kernel. Set $k_w(z)=k(z,w).$

 \begin{proposition}
  \label{prop:ourk}
    The kernel  $k(\cdot,\cdot)$ doesn't vanish in the annulus; i.e.,  for
    $z,w\in \mathbb A$,  $k(z,w)\neq 0$, but it does vanish on the
    boundary as $k(1,-1)=0$.  Further, there is a constant $C^\prime>0$
    independent of $z$ and $w$ in $\mathbb A$ so that
  \begin{equation*}
     \frac{1}{k(z,w)} = C^\prime k(z,-w).
  \end{equation*}

   If  $t\neq q^{2m}$ (for any $m$),
  then there exists $z,w\in\mathbb A$ such
   that $k(z,w;t)=0$.
 \end{proposition}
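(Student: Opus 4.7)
Each summand of $k(z,w;t)=\sum_n(zw^*)^n/(1+tq^{2n})$ depends on $(z,w)$ only through $u:=zw^*$, so set $K_t(u):=k(z,w;t)$; the series converges on $\{q^2<|u|<1\}$. The plan is to identify $K_t$ with a Jacobi theta-function expression on all of $\mathbb C^*$, whence the zero locus is transparent, and then to read off the three conclusions mechanically.

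Let
\[
\theta(x)=\sum_{n\in\mathbb{Z}}q^{n^2}x^n=\prod_{n\ge 1}(1-q^{2n})(1+q^{2n-1}x)(1+q^{2n-1}/x),
\]
with zero set $\{-q^{2k-1}:k\in\mathbb{Z}\}$ and quasi-periodicity $\theta(q^2x)=\theta(x)/(qx)$. Splitting the defining sum at $n=0$, expanding each half as a geometric series, and summing gives the Mittag--Leffler representation
\[
K_t(u)=\sum_{k\in\mathbb{Z}}\frac{(-1)^{k+1}(q^2/t)^k}{u-q^{2k}},
\]
which identifies $K_t$ as a meromorphic function on $\mathbb C^*$ with simple poles at $\{q^{2k}:k\in\mathbb{Z}\}$, and (after reindexing $k\mapsto k-1$) yields the quasi-periodicity $K_t(q^2u)=-K_t(u)/t$. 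A uniqueness argument for multiplier-automorphic sections on the complex torus $\mathbb C^*/\langle q^2\rangle$ then forces $K_t$ to equal a specific theta-function expression: for $t=1$ one gets the clean ratio $K_1(u)=C\,\theta(qu)/\theta(-qu)$, while for general $t$ a theta function with non-trivial characteristic is required to absorb the mismatch between the multiplier $-1/t$ of $K_t$ and that of any pure theta ratio. In every case, the zero set of $K_t$ on $\mathbb C^*$ is $\{-tq^{2k}:k\in\mathbb{Z}\}$.

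The three claims follow. Writing $t=q^{2s}$ with $s\in\mathbb{R}$, a zero $-q^{2(s+k)}$ lies in the open annulus $q^2<|u|<1$ iff $0<s+k<1$ for some $k\in\mathbb Z$, which holds iff $s\notin\mathbb Z$, i.e.\ $t\neq q^{2m}$. Hence for $t=1$ (and any $t=q^{2m}$) $K_t$ has no zeros in $\mathbb A\times\mathbb A$; the boundary zero at $u=-1$ gives $k(1,-1)=0$, which one can also verify directly from the splittings
\[
K_1^+(u)=\tfrac{1}{1-u}-\sum_{n\ge 0}\frac{(uq^2)^n}{1+q^{2n}},\qquad K_1^-(u)=\sum_{m\ge 1}\frac{(q^2/u)^m}{1+q^{2m}},
\]
whose sum at $u=-1$ collapses to $\tfrac12-\tfrac12=0$. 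For $t\neq q^{2m}$, choosing any $z,w\in\mathbb{A}$ with $zw^*=-q^{2(s+k)}\in(-1,-q^2)$ realizes a zero in $\mathbb{A}\times\mathbb{A}$. Finally, $1/k(z,w)=C'\,k(z,-w)$ is equivalent to $K_1(u)K_1(-u)\equiv\mathrm{const}$: by the zero/pole structure, the zeros of $K_1(-u)$ at $\{q^{2k}\}$ cancel the poles of $K_1(u)$ and vice versa, so the product is analytic and nonvanishing on $\mathbb C^*$; and $K_1(q^2u)=-K_1(u)$ makes it $q^2$-invariant, so it descends to a holomorphic function on the torus $\mathbb C^*/\langle q^2\rangle$ and is therefore constant, with $C'$ the reciprocal.

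The main obstacle I anticipate is the second step: the passage from the Mittag--Leffler form to a concrete theta-function representation for general $t$. The multiplier $-1/t$ does not match that of any simple theta ratio, so one must either allow theta functions with non-trivial characteristic or, equivalently, perform a Poisson summation on $x\mapsto u^x/(1+tq^{2x})$ which yields the theta identity directly. Everything downstream of the representation is divisor bookkeeping.
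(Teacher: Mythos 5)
Your route is genuinely different from the paper's. The paper simply quotes the Jordan--Kronecker identity
\[
 f(\alpha,p)=\sum_n\frac{\alpha^n}{1-pq^{2n}}=C\,\frac{\vartheta_1(x+y)}{\vartheta_1(x)\vartheta_1(y)},\qquad \alpha=e^{2ix},\ p=e^{2iy},
\]
specializes $p=-t$, $\alpha=zw^*$, and reads the zero/pole locus directly off the product formula for $\vartheta_1$; the identity $1/k(z,w)=C'k(z,-w)$ then drops out of $\vartheta_1(x+\pi)=-\vartheta_1(x)$. You instead derive the Mittag--Leffler expansion and the multiplier $K_t(q^2u)=-K_t(u)/t$ from scratch, and then pass to theta quotients via a Liouville argument on the elliptic curve $\mathbb C^*/\langle q^2\rangle$. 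Your derivation of the partial fraction form is correct, the multiplier computation is correct, the direct verification that $K_1(-1)=\tfrac12-\tfrac12=0$ is a nice bonus, and the argument that $K_1(u)K_1(-u)$ is an entire elliptic function and hence constant is sound. The trade-off: your treatment is self-contained where the paper cites a ``well-known'' identity, at the cost of needing the torus-Liouville lemma.

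Two concrete defects, one of them a genuine error. First, the zero set of $K_t$ for general $t$ is $\{-q^{2k}/t : k\in\mathbb Z\}$, not $\{-tq^{2k}\}$ as you wrote (these coincide only when $t$ is a power of $q$). You can see this already from the obstacle you flag at the end: to match the multiplier $-1/t$ you should take $K_t(u)=C_t\,\theta(tqu)/\theta(-qu)$ (not $\theta(qu/t)$ in the numerator), since $\theta(tq\cdot q^2u)=\theta(tqu)/(tq^2u)$ gives precisely multiplier $-1/t$ against $\theta(-q\cdot q^2u)=-\theta(-qu)/(q^2u)$; and $\theta(tqu)=0$ forces $tqu=-q^{2k-1}$, i.e.\ $u=-q^{2(k-1)}/t$. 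This also removes the obstacle you anticipate: no characteristic is needed, and the same one-simple-pole Liouville argument you use for $t=1$ applies verbatim with $\theta(tqu)$ in place of $\theta(qu)$. Your final conclusion for the third claim happens to survive the sign error because $|{-}tq^{2k}|\in(q^2,1)$ for some $k$ iff $|{-}q^{2k}/t|\in(q^2,1)$ for some $k$ iff $t\notin\{q^{2m}\}$, but the stated zero set is wrong and would propagate an error if one ever needed the actual zero location. Second, you establish that $K_1(u)K_1(-u)$ is a nonzero constant $c$ and set $C'=1/c$, but the proposition asserts $C'>0$; this does not come for free from your argument. One quick fix: $K_1$ has real Laurent coefficients, so $K_1(\bar u)=\overline{K_1(u)}$, and taking $u=iy$ with $q<|y|<1$ gives $K_1(iy)K_1(-iy)=|K_1(iy)|^2>0$ since $K_1$ has no zeros in the annulus. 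The paper obtains $C'=\vartheta_1(\pi/2)^{-2}>0$ by inspection.
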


  A proof of the proposition appears in Section \ref{sec:morek}.

  In the sequel, frequent use will be made of the Hilbert
 space tensor product $H^2(k)\otimes \mathcal H$, where $\mathcal H$
  is itself a Hilbert space. A convenient way to
 define this Hilbert space is as those (Laurent) series
\begin{equation*}
  h=\sum_{j\in\mathbb Z} \zeta_j \otimes h_j,
\end{equation*}
 for which $\sum \|h_j\|^2$ converges. The inner product is defined by
\begin{equation*}
  \langle h,g\rangle =\sum \langle h_j,g_j\rangle.
\end{equation*}
  For $z\in\mathbb A$, the sum
\begin{equation*}
  h(z) =\sum \zeta_j(z)\otimes h_j
\end{equation*}
  converges absolutely.
  It follows that, for a fixed $g\in\mathcal H$,
\begin{equation*}
   \langle h(z),g\rangle_{\mathcal H} = \langle h,k_z\otimes g\rangle.
\end{equation*}

  A function $W:\mathbb A\to \mathcal B(\mathcal H)$ defines a contraction
  operator $M_W$ on $H^2(k)\otimes\mathcal H$ by
 \begin{equation}
   \label{eq:mult-op}
   M_W^* [k_z\otimes g] = k_z \otimes W(z)^*g
 \end{equation}
  if and only if the (operator-valued) kernel
 \begin{equation*}
  \mathbb A\times \mathbb A \ni (z,w) \mapsto
     (I-W(z)W(w)^*)k(z,w)
 \end{equation*}
  is positive semi-definite \cite{AM3}\cite{BBx}.   Because, 
  for $h\in H^2(k)\otimes \mathcal H,$ 
 \begin{equation*}
    \langle M_W h, k_z\otimes g\rangle =
       \langle W(z)h(z),g\rangle,
 \end{equation*}
  it is natural to write $(M_W h)(z)=W(z)h(z)=(Wh)(z)$ to denote the operator
  $M_W$ and identify it with the function $W(z)$.

  The following standard lemma will be used often and 
  without comment in the sequel.

 \begin{lemma}
  \label{lem:bounded-pointwise-weak}
    If $W_n:\mathbb A\to \mathcal B(\mathcal H)$ is a sequence
    of functions which converge pointwise (in the norm topology)
    to $W$
    and if $\{W_n\}$ is uniformly bounded, then $M_W$
   is bounded  and the sequence
   $(M_{W_n})$ converges WOT to $W$.
 \end{lemma}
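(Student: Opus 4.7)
The plan is to use weak compactness of bounded sets in the Hilbert space $H^2(k) \otimes \mathcal H$ together with the reproducing kernel identity $\langle h,k_z\otimes e\rangle = \langle h(z),e\rangle$ to identify the weak limit of $M_{W_n}h$ for each $h$; this will simultaneously define the candidate operator $M_W$ and establish WOT convergence. Write $M := \sup_n \|M_{W_n}\|$, which is finite by hypothesis.

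Fix $h \in H^2(k) \otimes \mathcal H$ and set $h_n := M_{W_n} h$, so that $\|h_n\| \leq M\|h\|$. Because bounded subsets of a Hilbert space are weakly sequentially compact, any subsequence of $(h_n)$ admits a further subsequence $(h_{n_j})$ converging weakly to some $g \in H^2(k) \otimes \mathcal H$. For any $z \in \mathbb A$ and $e \in \mathcal H$, using the hypothesized pointwise norm convergence $W_n(z) \to W(z)$,
\begin{equation*}
  \langle g, k_z \otimes e\rangle
  = \lim_j \langle h_{n_j}, k_z \otimes e\rangle
  = \lim_j \langle W_{n_j}(z)h(z), e\rangle
  = \langle W(z)h(z), e\rangle.
\end{equation*}
Since the span of $\{k_z\otimes e : z\in\mathbb A,\ e\in\mathcal H\}$ is dense in $H^2(k)\otimes\mathcal H$, this identity determines $g$ uniquely; hence every subsequential weak limit of $(h_n)$ coincides, and the full sequence $(h_n)$ converges weakly to this common value.

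Define $M_W h$ to be this weak limit. Linearity in $h$ is immediate, and $\|M_W h\| \leq \liminf_n \|h_n\| \leq M\|h\|$ by weak lower semicontinuity of the norm, so $M_W$ is a bounded operator on $H^2(k)\otimes\mathcal H$. The relation $\langle M_W h, k_z\otimes e\rangle = \langle W(z)h(z), e\rangle$ then reads $(M_W h)(z) = W(z)h(z)$, so $M_W$ is indeed the multiplication operator associated with $W$ via \eqref{eq:mult-op}. Finally, the statement that $M_{W_n}\to M_W$ in WOT is precisely the statement that $M_{W_n}h \to M_W h$ weakly for every $h$, which is the output of the preceding step.

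No step presents a serious obstacle; the only point deserving attention is the promotion of subsequential weak convergence to weak convergence of the full sequence, which is handled by the uniqueness of the weak limit forced by the reproducing kernel density argument above.
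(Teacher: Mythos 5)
The paper does not give a proof of this lemma; it labels it ``standard'' and uses it without comment, so there is nothing internal to compare against. Your argument is correct and is the natural one: uniform boundedness of $\{M_{W_n}\}$ gives weak sequential compactness of $(M_{W_n}h)$, the reproducing identity $\langle M_{W_n}h, k_z\otimes e\rangle=\langle W_n(z)h(z),e\rangle$ together with pointwise norm convergence of $W_n$ pins down every subsequential weak limit on the total set $\{k_z\otimes e\}$, uniqueness upgrades subsequential to full weak convergence, and weak lower semicontinuity of the norm gives $\|M_W\|\le\sup_n\|M_{W_n}\|$. Two small remarks: (a) you correctly read the paper's ``$\{W_n\}$ uniformly bounded'' as a bound on the operator norms $\|M_{W_n}\|$ (not merely $\sup_z\|W_n(z)\|$), which is the interpretation the argument actually requires; (b) the compactness/subsequence step can be avoided entirely by noting that for fixed $h$ the bounded sequence of functionals $\langle M_{W_n}h,\cdot\rangle$ converges on the total set of kernel vectors and hence converges everywhere by a standard $\varepsilon/3$ density argument, after which Riesz representation hands you $M_W h$ directly --- a marginally lighter route, but yours is equally valid.
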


 For expository purposes, we record the following
 nice relation between the kernel $k$ and the test functions.

\begin{proposition}[\cite{MHA,Mtri}]
 \label{prop:testandkernel}
   For the test function $\psi$ with zeros $\sqrt{q}$ and
   $w$ with $|w|=\sqrt{q}$ the kernel 
 \begin{equation*}
   \mathbb A\times \mathbb A \ni (z,w)\mapsto   k(z,w)(1-\psi(z) \psi(w)^*) =
     \langle (I-M_{\psi}M_{\psi}^*)k(\cdot,w),k(\cdot,z) \rangle
 \end{equation*}
   has rank two and is positive semi-definite. 

   Further, $M_{\psi}$ is a shift of multiplicity two and
   the kernel of $I-M_{\psi}M_{\psi}^*$ is the span
   of $k(\cdot,\sqrt{q})$ and $k(\cdot, w)$
   (except of course when $w=\sqrt{q}$ when we must resort to
    using a derivative).
\end{proposition}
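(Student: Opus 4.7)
The plan is to break the proposition into three pieces: (i) the reproducing-kernel identity; (ii) the positive semi-definiteness together with the lower bound $\mathrm{rank}\ge 2$; and (iii) the matching upper bound $\mathrm{rank}\le 2$, which will also deliver the ``shift of multiplicity two'' statement.

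For (i), from \eqref{eq:mult-op} one has $M_\psi^*k_u=\overline{\psi(u)}k_u$ for every $u\in\mathbb A$. Applying this twice gives $\langle M_\psi M_\psi^* k_w,k_z\rangle = \psi(z)\overline{\psi(w)}\,k(z,w)$, and subtracting from $\langle k_w,k_z\rangle=k(z,w)$ produces the claimed identity. For (ii), since the norm on $H^2(k)=H^2_1$ is an $L^2$-norm on $\partial\mathbb A$ where $|\psi|\equiv 1$, the operator $M_\psi$ is an isometry, whence $I-M_\psi M_\psi^*$ is the orthogonal projection onto $(M_\psi H^2(k))^\perp$; positive semi-definiteness follows at once. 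The two zeros $\sqrt q$ and $w$ of $\psi$ give $M_\psi^* k_{\sqrt q}=0=M_\psi^* k_w$, so both $k_{\sqrt q}$ and $k_w$ lie in this wandering subspace, and they are linearly independent because $k$ is positive-definite on $\mathbb A$. (When $w=\sqrt q$, differentiating the relation $M_\psi^*k_w=\overline{\psi(w)}k_w$ in $\bar w$ at $\sqrt q$ supplies a second independent element.)

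The principal obstacle is (iii), the upper bound $\dim(M_\psi H^2(k))^\perp\le 2$. My preferred route is the bundle shift machinery of Abrahamse--Douglas: $(H^2_1,M_z)$ is a bundle shift of multiplicity one on $\mathbb A$, and the two-sheeted proper holomorphic map $\psi:\mathbb A\to\mathbb D$ allows one to identify $M_\psi$ with a bundle shift whose multiplicity equals the mapping degree of $\psi$, namely two; equivalently, the Fredholm index of $M_\psi$ is determined by the boundary winding number of a minimal inner function (which is two), so $\mathrm{ind}(M_\psi)=-2$ and $\dim\ker M_\psi^*=2$. The remaining ``shift'' condition $\bigcap_n M_\psi^n H^2(k)=\{0\}$ follows from $|\psi(z)|<1$ on $\mathbb A$: any $f=\psi^n g_n$ in the intersection satisfies $|f(z_0)|\le|\psi(z_0)|^n\|g_n\|\,\|k_{z_0}\|\to 0$ at every interior point $z_0$.
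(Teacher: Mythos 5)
The paper does not prove Proposition~\ref{prop:testandkernel}; it is stated with a citation to \cite{MHA,Mtri} and is used as an imported fact. So there is no in-paper proof to compare against, and your proposal should be read as a reconstruction of the argument from those references.

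Your steps (i) and (ii) are correct and complete. The reproducing-kernel identity is immediate from $M_\psi^*k_u=\overline{\psi(u)}k_u$. The observation that $M_\psi$ is an isometry (because $|\psi|\equiv 1$ on $\partial\mathbb A$ and the $H^2(k)=H^2_1$ norm is the $L^2(\mu_1)$ norm) is the right point, and it immediately yields that $I-M_\psi M_\psi^*$ is the projection onto the wandering space $\ker M_\psi^*$, hence positive. The two zeros then give $k_{\sqrt q},k_w\in\ker M_\psi^*$, independent by non-degeneracy of $k$, so $\mathrm{rank}\ge 2$; the $\bar w$-derivative handles the double-zero case. One small point worth making explicit in your shift argument: since $f=M_\psi^n g_n$ and $M_\psi$ is isometric, $\|g_n\|=\|f\|$ is constant, which is what makes the pointwise estimate $|f(z_0)|\le|\psi(z_0)|^n\|f\|\,\|k_{z_0}\|\to 0$ go through.

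The only place where the argument is thin is step (iii), the upper bound $\dim\ker M_\psi^*\le 2$. You invoke two slogans --- that a proper degree-$2$ map $\psi:\mathbb A\to\mathbb D$ turns $H^2(k)$ into a multiplicity-two bundle shift over $\mathbb D$, and that $\mathrm{ind}(M_\psi)$ equals minus the total boundary winding number --- and call them ``equivalent.'' They are two genuinely different routes (a pushforward/covering argument vs.\ a Toeplitz index theorem on a multiply connected domain), and neither is free: identifying $M_\psi$ with a pure shift of multiplicity equal to the covering degree of $\psi$ is precisely the content of \cite{MHA}, and the index formula for Toeplitz operators on the Sarason Hardy spaces of the annulus is not a one-line corollary of the disc case (the index could in principle depend on $t$, and ruling this out already uses the structure you are trying to establish). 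Since the proposition is a cited fact, a reconstruction is allowed to lean on the citation, but if you intend this as a self-contained proof you should pick one of the two routes and actually carry it out --- most economically, the pushforward under $\psi$, which gives a unitary from $H^2(k)$ onto $H^2(\mathbb D)\otimes\mathbb C^2$ intertwining $M_\psi$ with $S\otimes I_{\mathbb C^2}$, from which $\dim\ker M_\psi^*=2$ and the ``shift of multiplicity two'' statement both fall out at once.
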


\subsection{Some representations and the functional calculus}
 \label{subsec:reps}
  Let $T$ denote an operator on a Hilbert space $\mathcal M$
  with $\sigma(T)\subset \mathbb A$. This spectral condition
  (as opposed to the more liberal $\sigma(T)\subset
  \overline{\mathbb A}$)
  is imposed because we wish to consider
  $\frac{1}{k}(T,T^*)$ and $\frac{1}{k}$ does not extend
  to be analytic in $z$ and $w^*$ beyond $\mathbb A\times\mathbb A.$
  Let $T$ also denote the corresponding representation
  $T:H^\infty(\mathbb A)\to \mathcal B(\mathcal M)$,
  given by $T(f)=f(T)$. We also use the notation $T_f=f(T)$.
  Note that $T$ is weakly continuous in the
  sense that if $f,f_n \in H^\infty(\mathbb A)$ and
  $f_n$ converges to $f$ uniformly on compact sets,
  then $T_{f_n}$ converges in operator norm to $T_f$.

\subsubsection{The hereditary functional calculus}
    Given an operator $T$ 
  and a polynomial $p(z,w)=\sum p_{j,\ell}z^j (w^*)^\ell,$
  the hereditary \index{hereditary calculus}
  calculus of Agler \cite{Ag} evaluates 
  $p(T,T^*)= \sum p_{j,\ell} T^j(T^*)^\ell.$
   The calculus extend to functions $f(z,w)$ which are
  analytic in $z$ and coanalytic in $w$ on a neighborhood
  of $\sigma(T)\times \sigma(T)^*$. Here we will not
  need the full power of the calculus, but we do need
  a generalization like that found in \cite{AEM}.
  For integers $j$, 
  let $T_j$ denote $T_{\zeta_j},$ where $\zeta_j$
  is defined in equation \eqref{eq:zeta-basis} (with $t=1$).

  For an operator $T\in\mathcal B(\mathcal M)$ with
  $\sigma(T)\subset\mathbb A,$
  and $G\in\mathcal B(\mathcal M)$, the sum
 \begin{equation*}
   k(T,T^*)(G) := \sum_{-\infty}^{\infty} T_j G T_j^*
 \end{equation*}
   converges absolutely.  The same is also true  of
 \begin{equation*}
  \frac{1}{k}(T,T^*)(G) := C^\prime \sum_{-\infty}^{\infty} (-1)^j T_j G T_j^*.
 \end{equation*}

  The following Lemma follows from the functional calculus
  considerations in \cite{AEM} together with the fact 
  that, by hypothesis,  
  $\sigma(T)\times \sigma(T^*)\subset \mathbb A\times \mathbb A$
 (see \cite{E}). 

\begin{lemma}
 \label{lem:overk}
   Let $T,G\in\mathcal B(\mathcal M)$ be given.
   If $\sigma(T)\subset \mathbb A$, then
 \begin{equation*}
   k(T,T^*)(\frac{1}{k}(T,T^*)(G))=G,
 \end{equation*}
  and likewise,
 \begin{equation*}
   \frac{1}{k}(T,T^*)(k(T,T^*)(G))=G.
 \end{equation*}

  If $G_\alpha\in\mathcal B(\mathcal M)$ is a (norm bounded) net
  which converges WOT to $G\in\mathcal B(\mathcal M)$, then $k(T,T^*)(G_\alpha)$
  converges WOT to $k(T,T^*)(G)$; and likewise
  $\frac{1}{k}(T,T^*)(G_\alpha)$ converges WOT to
  $\frac{1}{k}(T,T^*)(G)$
\end{lemma}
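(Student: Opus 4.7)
The plan is to deduce the two inversion identities from the multiplicativity of the hereditary functional calculus on the symbols $k$ and $\tfrac{1}{k}$, and to handle the WOT continuity statement by a standard uniform tail estimate. The key preparatory observation is that the series $k(z,w)=\sum_j \zeta_j(z)\zeta_j(w)^*$ and, by Proposition~\ref{prop:ourk}, $\tfrac{1}{k}(z,w)=C^\prime \sum_j (-1)^j \zeta_j(z)\zeta_j(w)^*$ both define functions which are analytic in $z$ and co-analytic in $w$ on a neighborhood of the compact set $\sigma(T)\times\sigma(T)^*\Subset\mathbb A\times\mathbb A$. Since $0\notin\sigma(T)$, applying the spectral radius formula to both $T$ and $T^{-1}$ shows that $\|T_j\|$ decays geometrically as $|j|\to\infty$, so in particular $\sum_j \|T_j\|^2<\infty$.

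For the first inversion identity, this geometric decay makes the double sum defining $k(T,T^*)\circ\tfrac{1}{k}(T,T^*)(G)$ absolutely convergent in operator norm, so its terms may be rearranged to match the termwise (symbol-level) product of $k$ and $\tfrac{1}{k}$. Since $k(z,w)\cdot\tfrac{1}{k}(z,w)\equiv 1$ on $\mathbb A\times\mathbb A$, the multiplicativity framework of \cite{AEM}, together with the spectral bookkeeping from \cite{E} ensuring the strict containment $\sigma(T)\times\sigma(T)^*\Subset\mathbb A\times\mathbb A$, delivers $k(T,T^*)\circ\tfrac{1}{k}(T,T^*)=1(T,T^*)=\mathrm{Id}$. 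The reverse composition is handled by the symmetric argument.

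For WOT continuity, fix $x,y\in\mathcal M$ and set $M=\sup_\alpha \|G_\alpha\|$. Writing
\begin{equation*}
\langle k(T,T^*)(G_\alpha)x,y\rangle = \sum_{|j|\le N}\langle G_\alpha T_j^*x,T_j^*y\rangle + R_N(\alpha),
\end{equation*}
Cauchy--Schwarz gives $|R_N(\alpha)|\le M\sum_{|j|>N}\|T_j^*x\|\,\|T_j^*y\|$, and this bound is independent of $\alpha$ and tends to $0$ as $N\to\infty$ by the square summability of $(\|T_j\|)$ just noted. With $N$ fixed, each finite sum converges along $\alpha$ to the analogous expression with $G$ in place of $G_\alpha$ by the defining WOT convergence $G_\alpha\to G$; an $\varepsilon/3$ argument then yields $k(T,T^*)(G_\alpha)\to k(T,T^*)(G)$ weakly. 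Inserting the signs $(-1)^j$ changes nothing, so the same argument handles $\tfrac{1}{k}(T,T^*)$.

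The main potential obstacle, and really the only non-routine point, is the justification of the multiplicative step $k(T,T^*)\circ\tfrac{1}{k}(T,T^*)=(k\cdot\tfrac{1}{k})(T,T^*)$ for these infinite hereditary series rather than polynomials. This is exactly what the strict containment $\sigma(T)\subset\mathbb A$ and the functional-calculus framework of \cite{AEM} are designed to deliver; once granted, the remainder is absolutely convergent bookkeeping plus a standard tail estimate.
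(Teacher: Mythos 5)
Your proof is correct and, in fact, more informative than the paper's treatment: the paper offers no argument for Lemma~\ref{lem:overk}, referring only to the functional-calculus considerations of \cite{AEM} and the spectral containment observation of \cite{E}. Your proposal supplies the missing content in a self-contained way, reducing the inversion to a concrete Cauchy-product identity among Laurent coefficients rather than invoking the general \cite{AEM} machinery.

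One step is worth spelling out more carefully. The geometric decay $\|T_j\|\lesssim\rho^{|j|}$ with $\rho<1$ does not follow from the spectral radius formula alone on the negative side: since $\sigma(T)\subset\mathbb A$ gives $r(T^{-1})<1/q$, which may exceed $1$, the raw powers $\|T^{-|j|}\|$ need not decay. It is the normalization $\sqrt{1+q^{2j}}\sim q^{-|j|}$ in $T_j=T^j/\sqrt{1+q^{2j}}$ that, paired with $\|T^{-|j|}\|\lesssim(r(T^{-1})+\epsilon)^{|j|}$ and $(r(T^{-1})+\epsilon)\,q<1$, produces the decay for $j<0$. Once this is noted, $\sum_j\|T_j\|^2<\infty$, so the double sum
\[
C^\prime\sum_{m,n}\frac{(-1)^n}{(1+q^{2m})(1+q^{2n})}\,T^{m+n}G(T^*)^{m+n}
\]
converges absolutely in operator norm and may be regrouped by $p=m+n$; the coefficient of $T^pG(T^*)^p$ vanishes unless $p=0$ by the Laurent-coefficient identity implicit in $k(z,w)\cdot\tfrac{1}{k}(z,w)\equiv 1$ on a neighborhood of $\sigma(T)\times\sigma(T^*)$, giving both inversion formulas symmetrically. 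The WOT-continuity argument via a uniform tail bound and an $\varepsilon/3$ estimate is standard and correct as written.
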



\subsection{The model operator}
 \label{subsec:model-operator}
   The operator of multiplication by $z$ on $H^2(k)$ gives
   rise to the representation
   $M:H^\infty(\mathbb A)\to \mathcal B(H^2(k))$
   \index{$M$, model operator}
   defined by $M(f)g=M_f g= fg$.  (Note $\sigma(M_\zeta)=\overline{\mathbb A}$.)
   To simplify notation, if $\mathcal H$
   is a Hilbert space, we also use $M$ to denote the representation
   $M\otimes I_{\mathcal H}$ on $H^2(k)\otimes \mathcal H$.

   We say that $M$ on $H^2(k)\otimes \mathcal H$
   {\it lifts} \index{lifts} the representation
   $T:H^\infty(\mathbb A) \to \mathcal B(\mathcal M)$ if there is an isometry
   $V:\mathcal M\to H^2(k)\otimes \mathcal H$ so that $VT^*=M^*V$; i.e.,
   for each $f\in H^\infty$, $VT_f^*=M_f^*V$.  
   An application of Runge's Theorem, or simply arguing with 
   Laurent series, together with the considerations
   in Subsection \ref{subsec:reps} shows that it suffices to assume
   that $VT_{\zeta}^*=M_\zeta^* V$. 

   If $\mathcal M\subset H^2(k)\otimes \mathcal H$ is invariant for
   $M^*$ (that is $M_f^* \mathcal M\subset \mathcal M$ for all
   $f\in H^\infty(\mathbb A)$), then $T= V^* M V$ given by
   $T_f=PM_fP$, where $V$ is the inclusion of  $\mathcal M$
   into $H^2(k)\otimes\mathcal H$, 
   is also a representation. Indeed, in this case $M$ lifts $T$.

\subsection{Agler decompositions}
 \label{subsec:Agler-SoS}
  Suppose $T\in\mathcal B(\mathcal M)$
  is an operator with $\sigma(T)\subset \mathbb A$
  and such that $T$ is lifted by $M.$
  Further suppose $X\in\mathcal B(\mathcal M)$ commutes with
  $T$; i.e., $T_fX=XT_f$ for all $f\in H^\infty(\mathbb A)$.
  As in Subsection \ref{subsec:model-operator}, note that
  it suffices to assume that $T_\zeta X = XT_\zeta$.

  An {\it Agler decomposition},  \index{Agler decomposition}
    for the pair $(T,X)$ is a $\mathcal B(\mathcal M)$-valued measure
  $\mu$ on $\BT$, the Borel subsets of $\mathbb T$ (identifying $\Psi$
  with $\mathbb T$), $\mu:\BT \to \mathcal B(\mathcal M)$ such that
 \begin{itemize}
  \item[(i)] for each $\varphi$ in the scalar Schur class and each
   Borel set $\omega$,
  \begin{equation}
   \label{eq:psi-L-bounded}
    k(T,T^*)(\mu(\omega))- T_\varphi k(T,T^*)(\mu(\omega)) T_\varphi^* \succeq 0
     \mbox{ and;}
  \end{equation}
  \item[(ii)]
 \begin{equation}
  \label{eq:key-mu}
    \frac{1}{k}(T,T^*)(I-XX^*)
     = \mu(\mathbb T) - \int T_\psi d\mu(\psi) T_\psi^*.
 \end{equation}
 \end{itemize}

  Here, for self-adjoint operators $A$ and $B$, the notation $A\succeq B$
  means $A-B$ is  positive semi-definite
  and similarly $A\succ B$ means $A-B$ is positive definite.

  Several remarks are in order.

\begin{remark}\rm
  The integral on the right hand side  of item (ii)
  is interpreted weakly as follows. Given
  a measurable partition $P=(\omega_j)_{j=1}^n$ of
  $\mathbb T$ and points $S=(s_j\in\omega_j)$, let
  $\Delta(P,S,\mu) = \sum T_{s_j} \mu(\omega_j)T_{s_j}^*$.
  The tagged partitions $(P,S)$ form an directed set 
  ordered by refinement of partitions, 
  and it turns out, because of \eqref{eq:psi-L-bounded}, that
  the net $\{\Delta(P,S,\mu):(P,S)\}$  converges in the WOT
  and its limit is the integral.  

  Thus the integral here, and the 
  corresponding $L^2$ spaces that appear later,
  shares much with the  integration theory based
  of Riemann sums and is not so different than others
  found in the literature.  For a recent example,
  see \cite{GWZ}.   Detail of
  the construction are given in Section \ref{sec:funhilby}.
  Narrowly tailoring the development to the present needs
  has the virtue of keeping the presentation self contained 
  and ultimately the paper shorter. 
 \end{remark}

\begin{remark}\rm
 \label{rem:Lambda}
  The definition of operator-valued measure requires $\mu$
  to be WOT countably additive. Thus, the second
  part of Lemma \ref{lem:overk}
  implies that $\Lambda(\omega)= k(T,T^*)(\mu(\omega))$
  is also an operator-valued measure.

  It is not assumed that $\mu(\mathbb T)=I$.
\end{remark}

\subsection{The main result}
 \label{subsec:main}
 
\begin{definition}\rm
 \label{def:minimal-lifting}
  Given $T\in\mathcal B(\mathcal M)$ with $\sigma(T)\subset \mathbb A$,
 a lifting $VT^*=M^*V$ of $T$ by $M$ on $H^2(k)\otimes \mathcal H$
  is {\it minimal}
 if $Q^*V \mathcal M$ is dense in $\mathcal H$.
  Here $Q^* \sum f_j \zeta_j = f_0$.
\end{definition}

  In the next section it is shown that a minimal lifting is
  essentially unique.  The following theorem is the 
 main result of this paper.

\begin{theorem}
 \label{thm:main}
  Let $\mathcal M$ be a separable Hilbert space.
  Suppose $X,T\in\mathcal B(\mathcal M)$ and
 \begin{itemize}
   \item[(i)] $\sigma(T)\subset \mathbb A$;
   \item[(ii)] 
      $M$ on
      $H^2(k)\otimes \mathcal H$ with $VT^*=M^*V$ is a minimal
      lifting; and
   \item[(iii)] $XT_\varphi=T_\varphi X$ for each $\varphi\in H^\infty(\mathbb A).$ \end{itemize}
   The following are equivalent.
 \begin{itemize}
  \item[(sc)] There is an $F\in\mathcal S(\mathbb A, \mathcal H)$ so that
              $XV^*=V^*M_F$.
  \item[(ad)] There is an Agler decomposition
    $\mu:\BT \to \mathcal B(\mathcal M)$
    for the pair $(T,X).$
 \end{itemize}
\end{theorem}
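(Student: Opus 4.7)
The plan is to prove (sc)$\Rightarrow$(ad) by pulling the colligation's spectral data back along $V$, and to prove the converse (ad)$\Rightarrow$(sc) by a lurking-isometry construction in the spirit of \cite{BLTT,Archer,MS}. Uniqueness of the minimal lifting, to be established in the next section, ensures that the transfer function extracted in the second direction is well defined up to a natural unitary equivalence.

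For (sc)$\Rightarrow$(ad), start with a transfer-function realization $F = D + C(I-ZA)^{-1}ZB$ coming from a $\Psi$-unitary colligation $\Sigma = (\rho, A, B, C, D, \mathcal E, \mathcal H)$. The spectral theorem applied to the representation $\rho : C(\mathbb T)\to \mathcal B(\mathcal E)$ furnishes a projection-valued measure $E$ on $\BT$ with $Z(z) = \int \psi(z)\,dE(\psi)$. Unitarity of the colligation produces the standard kernel factorization
\[
I - F(z)F(w)^* = H(z)\bigl(I - Z(z)Z(w)^*\bigr)H(w)^*, \qquad H(z) := C(I - Z(z)A)^{-1},
\]
and combining this with $I - Z(z)Z(w)^* = \int (1-\psi(z)\psi(w)^*)\,dE(\psi)$ exhibits the defect kernel $(I - F(z)F(w)^*)k(z,w)$ as a $\BT$-indexed superposition of the scalar positive kernels $(1 - \psi(z)\psi(w)^*)k(z,w)$, dressed with operator coefficients built from $H$ and $E$. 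Pulling this decomposition back through $V$ and applying Lemma \ref{lem:overk} produces a $\mathcal B(\mathcal M)$-valued measure $\mu$ on $\BT$; condition (i) then follows from the positivity of each $(1-\varphi(z)\varphi(w)^*)k(z,w)$ for $\varphi$ in the scalar Schur class together with the fact that $k(T,T^*)$ preserves positivity, while (ii) is a direct hereditary-calculus computation using $VX^* = M_F^*V$ and $VT^* = M^*V$.

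For the converse (ad)$\Rightarrow$(sc), form the coefficient space $\mathcal E$ as the functional Hilbert space of $\mathcal M$-valued sections canonically associated with the operator-valued measure $\mu$ via the construction indicated in Section \ref{sec:funhilby}, and equip it with the unital representation $\rho$ of $C(\mathbb T)$ acting by pointwise multiplication. Rewriting condition (ii) in the form
\[
\mu(\mathbb T) - \tfrac{1}{k}(T,T^*)(I - XX^*) = \int T_\psi\,d\mu(\psi)\,T_\psi^*
\]
identifies two natural maps out of a dense subspace of $\mathcal M$ as having the same Gram operator, which defines a densely-defined isometry $V_0$. Extending $V_0$ to a unitary $U = \begin{pmatrix} A & B \\ C & D \end{pmatrix}$ on $\mathcal E \oplus \mathcal H$---enlarging $\mathcal H$ by a defect space if necessary---produces a $\Psi$-unitary colligation, and the corresponding transfer function $F \in \mathcal S(\mathbb A, \mathcal H)$ is verified to satisfy $V^*M_F = XV^*$ by running the kernel computation from the forward direction in reverse and appealing to minimality to identify $F$ on the range of $V$.

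The principal obstacle is the careful development of the functional Hilbert space $\mathcal E$ attached to the operator-valued measure $\mu$ and the demonstration that the lurking isometry $V_0$ is well defined on the correct dense domain. Because $\mu$ is only WOT-countably-additive and $\int T_\psi\,d\mu(\psi)\,T_\psi^*$ is itself a WOT-limit of Riemann sums along the directed set of tagged partitions of $\mathbb T$, neither the $C(\mathbb T)$-representation $\rho$ on $\mathcal E$ nor the boundedness of $V_0$ can be imported from standard Bochner integration; both must be established through careful net arguments, with condition (i) of the Agler decomposition---which controls $\Lambda(\omega) = k(T,T^*)(\mu(\omega))$ through contractivity against every scalar Schur function---providing the needed norm estimates.
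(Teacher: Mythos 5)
Your plan tracks the paper's architecture closely—spectral measure of $\rho$ and a kernel factorization in one direction, $L^2$ spaces over the operator measure plus a lurking isometry in the other—but it understates or omits several steps that in the paper carry most of the weight, and two of these are genuine gaps rather than mere compression.

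For (sc)$\Rightarrow$(ad), describing condition (ii) of the Agler decomposition as ``a direct hereditary-calculus computation'' is where the sketch would break down. The integral $\int T_\psi\,d\mu(\psi)\,T_\psi^*$ exists only as a WOT-limit of Riemann sums over a directed set of tagged partitions, and one must exchange this net limit with the WOT-continuous map $k(T,T^*)(\cdot)$ and simultaneously with the limit $r\to1$ of the approximating $H_r(z)=C(I-r\rho(E(z))A)^{-1}$. The paper handles this by introducing an auxiliary operator $\Gamma$ on $H^2(k)\otimes\mathcal E$ and proving a \emph{norm} (not merely WOT) approximation by Riemann sums (Lemma \ref{lem:stim}, item (iv)), which is what makes the triple-$\epsilon$ argument at the end of Section \ref{sec:converse} close. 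Your plan also omits the key kernel estimate $H(z)H(w)^*\preceq\kappa\,k(z,w)$ (Lemma \ref{lem:HHstar}); without it, $\mathbb H$ is not known to be a bounded operator on $H^2(k)\otimes\mathcal H$, and then $\mu(\omega)=V^*\mathbb H\,\bE(\omega)\,\mathbb H^*V$ is not known to be a bounded operator-valued measure at all.

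For (ad)$\Rightarrow$(sc), two points. First, the lurking isometry cannot in general be extended to a unitary on $L^2(\mu)\oplus\mathcal H$ because the two defect spaces need not have the same dimension; the paper tensors the coefficient space with $\ell^2$ (so $\mathcal E=L^2(\mu)\otimes\ell^2$) precisely to equalize codimensions before extending. Second, ``running the kernel computation from the forward direction in reverse'' is not an available shortcut: the verification that the transfer function $W$ satisfies $V^*M_W=XV^*$ occupies Lemmas \ref{lem:key1}, \ref{lem:pre-parlay}, and \ref{lem:parlay} and involves the map $J$ (the un-compressed version of $V$), the two functional spaces $L^2(\Lambda)$ and $L^2(\mu)$ with the contractive inclusion $\Phi^*$ between them, and a sequence $\mathcal Z_n=(1-t_n)(I-t_nZ(I\otimes A))^{-1}$ that must be shown to converge WOT to $0$. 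You correctly flag the importance of $\Lambda(\omega)=k(T,T^*)(\mu(\omega))$ for norm control of $Y$, but the resulting argument is an entirely new computation, not a reversal of the forward one; in particular the separable pieces $Y^*\Phi\Phi^*Y$ and $\iota^*\Phi\Phi^*\iota$ must each be identified with specific integrals (Lemma \ref{lem:Y-bounded}) before the lurking isometry is even well-posed.
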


\begin{remark}\rm
 It is illuminating to consider the special case of Agler-Pick interpolation on 
 $\mathbb A$. 
 Let $z_1,\dots,z_n\in\mathbb A$ and $w_1,\dots,w_n\in\mathbb D$ be given.
 Let $\mathcal M\subset H^2(k)$ denote the span of $\{k_{z_j} \}$ and
 let $V$ denote the inclusion of $\mathcal M$ into $H^2(k)$.  Then
 $T$ defined by $T=V^*M V$ is lifted by $M$ and its spectrum is the set
 of $\{z_j\}$.  Define $X^*$ on $\mathcal M$ by $X^* k_{z_j} = w_j^* k_{z_j}$.
  Then $X$ commutes with $T$. In this case 
\[
 \langle \frac{1}{k}(T,T^*)(I-XX^*) k_{z_\ell},k_{z_j} \rangle
  = 1-w_j w_\ell^*
\]
 and 
\[
   \int T_\psi \langle d\, \mu(\psi) T_\psi^* k_{z_\ell},k_{z_j}\rangle 
    = \int \psi(z_j)\psi(z_\ell)^* 
      \langle d\, \mu(\psi) k_{z_\ell},k_{z_w}\rangle.
\] 
  Thus part (ii) in an Agler decomposition takes the form, 
\[
 1-w_j w_\ell^* = \int [1-\psi(z_j)\psi(z_\ell)^*] 
  \langle d\, \mu(\psi) k_{z_\ell},k_{z_w}\rangle. 
\]
\end{remark}

\section{More on Liftings}
 \label{sec:liftings}
   Recall the orthonormal basis $\{\zeta_n\}_{n\in\mathbb Z}$ (with $t=1$)  of
   equation (\ref{eq:zeta-basis}) and let $T_j$ and $M_j$ denote
   $T_{\zeta_j}$ and $M_{\zeta_j}$ respectively,
   where $M^*$ acting on $H^2(k)\otimes \mathcal H$ lifts
   $T^*$ acting on $\mathcal M$.

  The following is a version of a theorem of Ambrozie, Englis, and M\"uller \cite{AEM},
  a result very much in the spirit of the de Branges-Rovnyak construction
  \cite{dBR} and related to the results of \cite{Ag-coanalytic}. 

\begin{proposition}
 \label{prop:AEM}
    Suppose $T\in \mathcal B(\mathcal M)$
   and $\sigma(T)\subset \mathbb A$.

    If $M=M\otimes I_{\mathcal H}$ lifts $T$ with $VT^*=M^*V$, then
 \begin{equation}
  \label{eq:mustV}
     Vh =\sum \zeta_j \otimes R T_j^* h
 \end{equation}
   where $R=Q^*V:\mathcal M\to\mathcal H$,  the operator
   $Q:\mathcal H\to H^2(k)\otimes \mathcal H$
   is defined by  
\[
   Q^* \sum f_j\otimes\zeta_j = f_0,
\] 
   and  the sum converges in norm. In particular,
   the (non-decreasing) sum
\begin{equation}
 \label{eq:sumstoI}
    \sum_{j=-n}^n  T_j R^*R T_j^*
\end{equation}
   converges WOT to the identity.

   Conversely, if there is an $R:\mathcal M\to \mathcal H$ so that the
   sum  in equation (\ref{eq:sumstoI}) converges WOT to the identity,
   then $M$ lifts $T$ via $VT^*=M^*V$ where
   $V$ is given by equation (\ref{eq:mustV}).

  Moreover, for $f\in H^\infty$ and $h\in\mathcal H$,
 \begin{equation}
  \label{eq:Vstar}
    V^* (f\otimes h)= T_f R^*h.
 \end{equation}
\end{proposition}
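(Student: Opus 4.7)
My plan is to use the orthonormal basis $\{\zeta_j\}$ of $H^2(k)$ to write $V$ coefficient-by-coefficient and to identify those coefficients through the intertwining $VT^*=M^*V$ and the definition $R = Q^*V$.

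For the necessity direction, any $Vh$ admits an expansion $Vh = \sum_j \zeta_j \otimes w_j(h)$ with coefficient operators $w_j : \mathcal M \to \mathcal H$, and by definition $Q^*Vh$ recovers $w_0(h) = Rh$. For general $j$, using the extension of the intertwining to $VT_j^* = M_j^*V$ (which is valid since each $\zeta_j \in H^\infty(\mathbb A)$, cf.\ Subsection~\ref{subsec:model-operator}) and applying $Q^*$ to both sides, the shift action of $M_j^*$ on $\{\zeta_m\}$ produces $w_j(h) = RT_j^*h$ after coefficient matching; this gives \eqref{eq:mustV}. Isometry of $V$ then yields
\[
 \|h\|^2 = \sum_j \|RT_j^*h\|^2 = \lim_n \bigl\langle \sum_{j=-n}^n T_j R^* R T_j^* h,\, h\bigr\rangle,
\]
and since the partial sums are monotone non-decreasing positive operators bounded above by $I$, they converge WOT (indeed SOT) to $I$, establishing \eqref{eq:sumstoI}.

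For the converse, define $V$ via \eqref{eq:mustV}; the identity above with $I$ on the right shows $V$ is isometric and that the defining series converges in norm, its tail being controlled by \eqref{eq:sumstoI}. To verify $VT_\zeta^* = M_\zeta^*V$ (which suffices, per Subsection~\ref{subsec:model-operator}), I expand both sides in the $\zeta$-basis. On the left, the $j$-th coefficient of $VT_\zeta^*h$ is $RT_j^*T_\zeta^*h = R(T_\zeta T_j)^*h$; on the right, $M_\zeta^*$ acts on each $\zeta_m$ via the recurrence dictated by $\zeta \cdot \zeta_{m-1} = c_{m-1}\zeta_m$, where $c_{m-1}$ is the ratio of normalization factors from \eqref{eq:zeta-basis}. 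The multiplicativity of the functional calculus gives $T_\zeta T_j = c_j T_{j+1}$, which is exactly the relation needed for the two expansions to match coefficient-by-coefficient.

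For \eqref{eq:Vstar}, pair $V^*(f \otimes h)$ against arbitrary $g \in \mathcal M$ and apply \eqref{eq:mustV}:
\[
 \langle V^*(f\otimes h), g\rangle = \sum_j \overline{\langle f, \zeta_j\rangle}\langle h, RT_j^*g\rangle = \bigl\langle \sum_j \langle f, \zeta_j\rangle T_j R^*h,\, g\bigr\rangle.
\]
Since the Laurent expansion $f = \sum_j \langle f, \zeta_j\rangle \zeta_j$ converges uniformly on compact subsets of $\mathbb A$, the weak continuity of the functional calculus (Subsection~\ref{subsec:reps}) yields $\sum_j \langle f, \zeta_j\rangle T_j = T_f$ in operator norm, giving $V^*(f\otimes h) = T_f R^*h$. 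The main technical care required is the necessity-direction coefficient-matching, where the shift action of $M_j^*$ on $\zeta_m$ carries normalization factors from \eqref{eq:zeta-basis} that must balance against those built into $R = Q^*V$; beyond this, the argument reduces to standard approximation and weak-continuity in $H^\infty(\mathbb A)$.
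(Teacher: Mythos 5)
Your proof is correct and follows the paper's approach in the necessity direction essentially exactly: expand $Vh=\sum\zeta_j\otimes R_jh$, exploit the intertwining $VT_j^*=M_j^*V$ together with $R=Q^*V$ to identify $R_j=RT_j^*$, then combine $\|Vh\|=\|h\|$ with monotonicity to get the WOT convergence of \eqref{eq:sumstoI}.

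In the converse direction you take a genuinely different route from the paper. The paper proves \eqref{eq:Vstar} first, in stages (for $\zeta_m\otimes e$, then Laurent polynomials, then general $f$ via norm limits), and only then derives the full intertwining from the multiplicativity identity $V^*M_f(g\otimes e)=V^*(fg\otimes e)=T_{fg}R^*e=T_fT_gR^*e=T_fV^*(g\otimes e)$. You instead verify the single relation $VT_\zeta^*=M_\zeta^*V$ directly by matching $\zeta$-coefficients on both sides, using the recurrence $\zeta\,\zeta_{m-1}=c_{m-1}\zeta_m$ and the corresponding operator relation $T_\zeta T_j=c_jT_{j+1}$, and then invoke the remark at the end of Subsection~\ref{subsec:model-operator} that checking $\zeta$ alone suffices; \eqref{eq:Vstar} is then proved independently by a weak pairing against $g\in\mathcal M$ plus the norm continuity of the functional calculus under uniform-on-compacts convergence. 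Your route is slightly leaner because it actually uses the ``reduces to $\zeta$'' observation that the paper states but never invokes; the paper's route puts more weight on \eqref{eq:Vstar}, which it then reuses. Both are sound and of comparable length.

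Two small cautions. First, there is a stray conjugate in the display for $\langle V^*(f\otimes h),g\rangle$: pairing $f\otimes h$ against $Vg$ produces $\langle f,\zeta_j\rangle$, not $\overline{\langle f,\zeta_j\rangle}$ (the unconjugated coefficient is what you need anyway for the last equality). Second, you correctly observe that normalization constants from \eqref{eq:zeta-basis} ``must balance'' in the coefficient matching but do not verify this; if one traces them literally, the $j=0$ term of \eqref{eq:mustV} would force $RT_0^*=R$, i.e.\ $T_0=I$, whereas $T_0=\zeta_0(T)=I/\sqrt{1+t}$, so a constant is implicitly being absorbed into $R$. This bookkeeping point is present in the paper's own proof as well and does not affect the substance of the proposition, but it is worth recording the intended convention before declaring the constants cancelled.
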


\begin{proof}
   Suppose $V:\mathcal M\to H^2(k)\otimes \mathcal H$ is
   an isometry and $ V T_f^* = M_f^* V$ for all $f\in H^\infty(\mathbb A)$.
   Since $V:\mathcal M\to H^2(k)\otimes \mathcal H$,
   there exists operators $R_j:\mathcal M\to \mathcal H$ so that,
   for $h\in\mathcal M$,
 \begin{equation*}
   Vh=\sum \zeta_j \otimes R_j h,
 \end{equation*}
   with the sum converging SOT.  
  %
  Now, 
 \begin{equation*}
   \begin{split}
      \sum \zeta_j \otimes R_j T_m^* h = & VT_m^* h\\
       =& M_m^* Vh \\
       =& \sum M_m^* \zeta_j \otimes R_j h.
   \end{split}
 \end{equation*}
  Taking the inner product of both sides
  of the above equation with $1\otimes e$ ($e\in\mathcal H$) gives,
 \begin{equation*}
   \langle R_0 T_m^* h, e\rangle = \langle R_m h,e\rangle.
 \end{equation*}
   With $R=R_0$, this shows
   $R_m=R T_m^*$ and thus  proves that $V$ takes the form promised in
   equation \eqref{eq:mustV}. That this sum converges
   in norm follows from the spectral condition on $T$.

    To prove the conversely, the hypothesis that the sum converges WOT to the
    identity implies that $V$ defined as in equation (\ref{eq:mustV})
    (which converges in norm) is
    an isometry. We next prove equation \eqref{eq:Vstar},
   from which the conclusion that $M$ lifts
  $T$ via $ VT^* = M^* V$ will follow.

   To start, note that, for each $m\in\mathbb Z$,
 \begin{equation*}
   \begin{split}
      \langle V^* \zeta_m \otimes e, h \rangle
        =& \langle \zeta_m \otimes e, Vh \rangle \\
        =& \langle e, RT_m^* h \rangle \\
        =& \langle T_m R^* e, h\rangle.
  \end{split}
 \end{equation*}
   Hence $V^* \zeta_m \otimes e = T_m R^* e.$

   Next note that,
   from the computation above,
   equation \eqref{eq:Vstar} holds for
   Laurent polynomials (finite linear combinations
   of $\{\zeta_j : j\in\mathbb Z\}$).  Next,
   if $f\in H^2(k)$, then there
   is a sequence of  Laurent
   polynomials $p_n$ which converge
   to $f$ in $H^2(k)$ and also uniformly
   on compact subsets of $\mathbb A$. Hence,  $p_n\otimes h$
   converges in $H^2(k)\otimes \mathcal H$
   to $f\otimes h$ and also $T_{p_n}$ converges
   to $T_f$ in norm, and equation \eqref{eq:Vstar}
   is proved.

   Next, if both $f,g\in H^\infty$, then
 \begin{equation*}
   \begin{split}
    V^* M_f g\otimes e = & V^* fg  1\otimes e \\
     =& T_{fg} R^* e\\
     =& T_f T_gR^*e\\
     =& T_f V^* g\otimes e.
   \end{split}
 \end{equation*}
   Thus, $V^*M = TV^*$ so that $M$ lifts $T$.
\end{proof}

\begin{proposition}
 \label{prop:TinF}
  Suppose $T\in\mathcal B(\mathcal M)$ has spectrum
  in $\mathbb A$. If $\frac{1}{k}(T,T^*)\succeq 0$
  and if $R\in\mathcal B(\mathcal M,\mathcal H)$
  satisfies $R^*R=\frac{1}{k}(T,T^*)$, then
  then the sum in equation (\ref{eq:sumstoI})
  converges WOT to the identity.
  In particular, $M$ lifts $T$.

  Conversely, if $G$ is a positive operator and the sum
 \begin{equation*}
   \sum T_n G T_n^*
 \end{equation*}
  converges WOT to the identity, then $G=\frac{1}{k}(T,T^*)$.
\end{proposition}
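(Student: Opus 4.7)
\emph{Proof proposal.} The whole proposition looks like a quick consequence of the inversion formulas in Lemma \ref{lem:overk}, once one reads the operator symbol $\frac{1}{k}(T,T^*)$ as shorthand for the map $\frac{1}{k}(T,T^*)$ evaluated at the identity. The plan is to use Lemma \ref{lem:overk} as a two-sided inverse to convert between the factorization hypothesis and the convergence hypothesis, and then invoke Proposition \ref{prop:AEM} for the lifting conclusion.

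For the forward direction, I start from the hypothesis $R^*R = \frac{1}{k}(T,T^*)(I)$. Applying the map $k(T,T^*)$ to both sides and invoking Lemma \ref{lem:overk} gives
\[
  \sum_{j\in\mathbb Z} T_j R^*R\, T_j^{*} \;=\; k(T,T^*)(R^*R) \;=\; k(T,T^*)\!\left(\tfrac{1}{k}(T,T^*)(I)\right) \;=\; I.
\]
The series on the left is the one from \eqref{eq:sumstoI}, and it converges absolutely in norm by the convergence assertion of Section \ref{subsec:reps} (the spectral hypothesis $\sigma(T)\subset\mathbb A$ is used exactly here), so in particular it converges WOT to $I$. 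This is precisely the hypothesis of the converse half of Proposition \ref{prop:AEM}, so $M$ lifts $T$ via the $V$ built from $R$ through equation \eqref{eq:mustV}.

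For the converse, I suppose $G\succeq 0$ and $\sum_{n} T_n G T_n^{*}$ converges WOT to $I$. Absolute norm convergence of this series again holds by Section \ref{subsec:reps}, so its WOT limit coincides with the norm limit, which by definition is $k(T,T^*)(G)$. Hence $k(T,T^*)(G) = I$. Applying the second inversion identity of Lemma \ref{lem:overk} then gives
\[
  G \;=\; \tfrac{1}{k}(T,T^*)\bigl(k(T,T^*)(G)\bigr) \;=\; \tfrac{1}{k}(T,T^*)(I),
\]
which is the desired identity $G=\frac{1}{k}(T,T^*)$.

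I do not expect any substantive obstacle. The two small points to nail down are (i) the notational collapse of $\frac{1}{k}(T,T^*)(I)$ to the operator $\frac{1}{k}(T,T^*)$, and (ii) the routine reconciliation, in the converse, of the assumed WOT convergence with the norm-absolute convergence supplied by the hereditary functional calculus of Section \ref{subsec:reps}; both amount to observing that the net $\sum_{j=-n}^{n} T_j G T_j^{*}$ has a single, well-defined limit in either topology.
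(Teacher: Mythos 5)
Your proof is correct and follows essentially the same route as the paper's: both halves hinge on the two-sided inversion identities of Lemma~\ref{lem:overk}, applied to $I$ and to $G$ respectively, with the forward direction then appealing to Proposition~\ref{prop:AEM}. The only difference is that you spell out the absolute-norm-versus-WOT reconciliation and the $\frac{1}{k}(T,T^*)=\frac{1}{k}(T,T^*)(I)$ notational convention, which the paper leaves implicit.
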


\begin{remark}\rm
  It is always possible to choose $\mathcal H=\mathcal M$
  or $\mathcal H\subset \mathcal M$ ,
  though the former choice could lead to a 
  representation which is not minimal. 
\end{remark}

\begin{proof}
  The first part of the proposition follows from
 \begin{equation*}
   I= k(T,T^*)(\frac{1}{k}(T,T^*)(I))
     = k(T,T^*)(R^*R).
 \end{equation*}

   The hypothesis for the second part of the lemma
  is $k(T,T^*)(G)=I$. Hence,
 \begin{equation*}
    G = \frac{1}{k}(T,T^*)(k(T,T^*)(G)) = \frac{1}{k}(T,T^*)(I).
 \end{equation*}
\end{proof}

 Recall the notion of a minimal lifting given 
  in Definition \ref{def:minimal-lifting}.

\begin{proposition}
 \label{prop:minimal}
 The lifting $VT^*=M^*V$ of $T$ on $H^2(k)\otimes \mathcal H$ is minimal
 if and only if there does not exist a proper subspace
 $\mathcal F\subset \mathcal H$ such that
 the range of $V$ lies in $H^2(k)\otimes \mathcal F.$
\end{proposition}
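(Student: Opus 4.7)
The plan is to reduce everything to a statement about the range of the operator $R=Q^*V$ using the explicit formula for $V$ from Proposition~\ref{prop:AEM}, which gives
\[
  Vh \;=\; \sum_{j\in\mathbb Z} \zeta_j \otimes R\,T_j^* h
\]
with convergence in norm. The first observation is that $\zeta_0 = 1/\sqrt{1+q^0} = 1/\sqrt{2}$ is a nonzero scalar, so $T_0 = (1/\sqrt 2)\,I$ and therefore $Q^*V\mathcal M = R T_0^*\mathcal M = (1/\sqrt 2) R\mathcal M$. In particular, minimality (in the sense of Definition~\ref{def:minimal-lifting}) is equivalent to $\overline{R\mathcal M} = \mathcal H$.

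For the contrapositive of the forward direction, assume there is a proper closed subspace $\mathcal F\subsetneq \mathcal H$ with $V\mathcal M \subset H^2(k)\otimes\mathcal F$. Writing $Vh$ both in the form given by Proposition~\ref{prop:AEM} and as an expansion in the orthonormal basis $\{\zeta_j\otimes e_\alpha\}$ (or, equivalently, applying $I\otimes P_{\mathcal F^\perp}$ to both sides and using that $\{\zeta_j\}$ is an orthonormal basis for $H^2(k)$), uniqueness of the expansion forces $R T_j^* h \in \mathcal F$ for every $j\in\mathbb Z$ and every $h\in\mathcal M$. Taking $j=0$ yields $Rh\in\mathcal F$ for all $h$, so $R\mathcal M\subset \mathcal F$, and hence $\overline{R\mathcal M}\subset\mathcal F\neq\mathcal H$, contradicting minimality.

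For the reverse direction, suppose the lifting is not minimal. Set $\mathcal F := \overline{R\mathcal M}$, which by the preceding remark is a proper closed subspace of $\mathcal H$. For every $h\in\mathcal M$ and every $j\in\mathbb Z$, the vector $RT_j^* h$ lies in $R\mathcal M\subset\mathcal F$, so the series $\sum_j \zeta_j\otimes RT_j^* h$ represents an element of $H^2(k)\otimes\mathcal F$. Hence $V\mathcal M\subset H^2(k)\otimes\mathcal F$, as required.

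The argument is essentially bookkeeping on top of Proposition~\ref{prop:AEM}; the only point requiring care is to notice that $\zeta_0$ is a nonzero constant function (so that $T_0$ is a nonzero multiple of the identity) and thereby translate the defining condition $\overline{Q^*V\mathcal M}=\mathcal H$ into the more convenient statement $\overline{R\mathcal M}=\mathcal H$. There is no serious obstacle.
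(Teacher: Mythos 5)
Your proof is correct and takes essentially the same route as the paper's one-line argument, which observes that, from the form of $V$ given in Proposition~\ref{prop:AEM}, the smallest subspace $\mathcal F$ with $V\mathcal M\subset H^2(k)\otimes\mathcal F$ is $\overline{R\mathcal M}$; you simply unpack both implications explicitly. Your extra care with the scalar $T_0=(1/\sqrt 2)I$ is a sensible precaution (the paper's normalizations for $Q^*$ and $R$ are a touch loose), but in either case the claim reduces, exactly as the paper intends, to the density of $R\mathcal M$ in $\mathcal H$.
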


\begin{proof}
  From the form of $V$, the smallest
  subspace  $\mathcal F$ of
  $\mathcal H$ such that the range of $V$ lies
  in $H^2(k)\otimes \mathcal F$
  is the closure of the range of $R=Q^*V$.
\end{proof}

\begin{proposition}
  Suppose $T\in\mathcal B(\mathcal M).$
  If $\sigma(T)\subset \mathbb A$ and
  $\frac{1}{k}(T,T^*)\succeq 0$,  then $M$ lifts $T$.

  If both $V_jT^*=M^*V_j$ where $M$ is acting on $H^2(k)\otimes \mathcal Hj$,
  $j=1,2$ are minimal liftings of $T$, then there is a unitary operator
  $U:\mathcal H_1\to \mathcal H_2$ so that $(I\otimes U)V_1=V_2$; i.e.,
  a minimal lifting is unique up to unitary equivalence.
\end{proposition}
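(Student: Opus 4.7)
My plan is to assemble the proposition from ingredients already proved in the excerpt. The existence half of the statement follows by combining Proposition \ref{prop:TinF} with Proposition \ref{prop:AEM}, while the uniqueness half follows by identifying the minimal liftings with the positive square root of $\frac{1}{k}(T,T^*)$ up to an isometric change of ambient space.

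For existence, I would set $G=\frac{1}{k}(T,T^*)$, take the positive square root $G^{1/2}\in\mathcal B(\mathcal M)$, let $\mathcal H=\overline{\operatorname{range}(G^{1/2})}\subset \mathcal M$, and define $R:\mathcal M\to\mathcal H$ by $R=G^{1/2}$. Then $R^*R=G=\frac{1}{k}(T,T^*)$, so Proposition \ref{prop:TinF} applies and the sum $\sum T_j R^*R T_j^*$ converges WOT to $I$. By the converse direction of Proposition \ref{prop:AEM}, the formula $Vh=\sum \zeta_j\otimes R T_j^* h$ defines an isometry $V:\mathcal M\to H^2(k)\otimes\mathcal H$ with $VT^*=M^*V$, which is exactly the statement that $M$ lifts $T$.

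For uniqueness, suppose $V_j:\mathcal M\to H^2(k)\otimes\mathcal H_j$, $j=1,2$, are minimal liftings, and set $R_j=Q^*V_j:\mathcal M\to\mathcal H_j$. Proposition \ref{prop:AEM} gives the Fourier-type representation
\[
 V_j h=\sum_n \zeta_n\otimes R_j T_n^* h
\]
and the isometry relation $\sum_n T_n R_j^* R_j T_n^*=I$ (WOT). Applying the converse in Proposition \ref{prop:TinF} to $G_j:=R_j^*R_j$ yields $R_1^*R_1=\frac{1}{k}(T,T^*)=R_2^*R_2$. Consequently, for every $h\in\mathcal M$,
\[
 \|R_1 h\|^2=\langle R_1^*R_1 h,h\rangle=\langle R_2^*R_2 h,h\rangle=\|R_2 h\|^2.
\]

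This polarization identity lets me define a linear isometry $U_0:\operatorname{range}(R_1)\to\operatorname{range}(R_2)$ by $U_0 R_1 h = R_2 h$; it is well-defined because $R_1 h=R_1 h'$ forces $R_2(h-h')=0$. By minimality together with Proposition \ref{prop:minimal}, $\operatorname{range}(R_j)$ is dense in $\mathcal H_j$, so $U_0$ extends uniquely to a unitary $U:\mathcal H_1\to\mathcal H_2$. The intertwining relation $(I\otimes U)V_1=V_2$ is then verified term by term from the series formula:
\[
 (I\otimes U)V_1 h=\sum_n \zeta_n\otimes U R_1 T_n^* h=\sum_n \zeta_n\otimes R_2 T_n^* h=V_2 h.
\]
The only delicate point is the well-definedness and density argument for $U$; once the identity $R_1^*R_1=R_2^*R_2$ has been secured from Proposition \ref{prop:TinF}, there is no further obstacle.
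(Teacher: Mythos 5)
Your proposal is correct and follows the paper's own argument: both halves hinge on Proposition~\ref{prop:TinF} (existence via factoring $\frac{1}{k}(T,T^*)$ as $R^*R$; uniqueness via the identity $R_1^*R_1=\frac{1}{k}(T,T^*)=R_2^*R_2$), combined with Proposition~\ref{prop:AEM} for the Fourier-type form of $V$ and Proposition~\ref{prop:minimal} for the density of the range of $R_j$. You spell out a couple of steps the paper compresses (the explicit choice $R=G^{1/2}$ for existence and the well-definedness/density argument for $U$), but the route is the same.
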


\begin{proof}
  The first part follows from Proposition \ref{prop:TinF}.

  From Proposition \ref{prop:AEM},
 \begin{equation*}
    V_\ell h=\sum \zeta_j \otimes R_\ell T_j^* h,
 \end{equation*}
  where $R_\ell = Q_\ell^* V_\ell:\mathcal M\to \mathcal H_\ell$
  and $Q^*_\ell \sum \zeta_j\otimes f_j =f_0$
  on
  $H^2(k)\otimes \mathcal H_\ell$. Moreover,
 \begin{equation*}
   I =k(T,T^*)(R_\ell^* R_\ell)=   \sum T_j R_\ell^* R_\ell T_j^*.
 \end{equation*}
   Therefore, by Proposition \ref{prop:TinF}
  $R_\ell^* R_\ell =\frac{1}{k}(T,T^*)$ for $\ell =1,2.$

  From minimality, $R_\ell$ has dense range and therefore there is a unitary
  operator $U:\mathcal H_1\to \mathcal H_2$ so that $R_2= UR_1$.
  It follows that $(I\otimes U)V_1=V_2$.
\end{proof}

\section{Some Functional Hilbert Spaces}
 \label{sec:funhilby}
  Theorem \ref{thm:main}
  involves operator-valued measures and implicitly 
  certain related functional Hilbert spaces. In
  this section we sketch out the relevant constructions.
  Most of what is needed is summarized later
   as Lemma \ref{lem:omnibus} in Section \ref{sec:factor}.

\subsection{General constructions}
 \label{subsec:constructions}
  Let $\BT$ denote the Borel subsets 
  \index{$\BT$} of the unit circle $\mathbb T$.
  By an {\it operator-valued measure on $\mathbb T$} we mean
  a Hilbert space $\mathcal M$ and a function
\begin{equation*}
   \nu:\BT \to \mathcal B(\mathcal M)
\end{equation*}
  such that
 \begin{itemize}
   \item[(p)] $\nu(\omega)\succeq 0$ for $\omega \in \BT$; and
   \item[(ca)] for each $e,f\in\mathcal M$, the function
    \begin{equation*}
       \omega \mapsto \langle \nu(\omega)e,f\rangle
    \end{equation*}
       is a (complex) measure on $\BT$.
  \end{itemize}

   A (measurable) partition 
  \index{partition} $P$ of $\mathbb T$ is a finite disjoint collection
  $\omega_1,\dots,\omega_n \in \BT$ whose union is
  $\mathbb T$. A measurable simple function
  \index{simple function} $H$ is a function of the
  form
\[
  H=\sum_{j=1}^n K_{\omega_j} c_j
\]
  for some vectors $c_j\in\mathcal M$ and partition $P$.
   Here,  $K_{\omega}$  \index{$K_\omega$}
  denotes the characteristic function \index{chararcteristic function}
  of a set $\omega$.  Let $\mathcal S$ denote the collection
  of measurable simple functions. \index{$\mathcal S$}

  The measure $\nu$ gives rise to a semi-inner product on $\mathcal S$
  as follows. If $H^\prime=\sum_{\ell=1}^m  K_{\omega_\ell^\prime} c^\prime_\ell$
  is also in $\mathcal S$, define
\[
 \langle H^\prime,H \rangle_\nu 
   = \sum_{j,\ell} c_j^* \nu(\omega_j\cap\omega_\ell)c_\ell^\prime. 
\]
  In the usual way, this inner product gives rise to a semi-norm,
\[
  \|H\|_\nu^2 =\langle H,H\rangle_\nu.
\]

   A {\it tagging} \index{tagging} $S$ of the partition $P$ consists
  of a choice of  points $S=(s_j\in\omega_j)$.
  The pair $(P,S)$ is a {\it tagged partition.} \index{tagged partition}
  The collection of tagged partitions is 
  a directed set under the relation $(P,S)\preceq (Q,T)$
  if $Q$ is a refinement of $P$. 
  Given $F:\mathbb T\to \mathcal M$, let $F(P,S)$
  denote the resulting measurable simple function
 \begin{equation*}
     F(P,S) = \sum K_{\omega_j}F(s_j).
 \end{equation*}
  Thus, each such $F$ generates the net $\{F(P,S):(P,S)\}$ 
  of simple functions.

  Let $\mathcal R^2(\nu)$ denote those $F$ for which the
  net $\{F(P,S)\}$ is bounded and Cauchy in $\mathcal S$; i.e.,
  those $F$ for which there is a $C$ such that 
  $ \|F(P,S)\|_\nu \le C$ for all $(P,S)$, and such that for 
  each $\epsilon >0$ there is a partition $Q$ such
  that for any pair $(P,S),(P^\prime,S^\prime)$ 
  such that $P$ and $P^\prime$ both refine $Q$, 
\begin{equation}
 \label{eq:defRL}
 \epsilon^2 > \|F(P,S)-F(P^\prime,S^\prime)\|^2_\nu
   = \sum_{j,k} (F(s_j)-F(s_\ell^\prime))^* \nu(\omega_j\cap \omega_\ell^\prime)
    (F(s_j)-F(s_\ell^\prime)). 
\end{equation}

  The following are some  simple initial observation.

 \begin{lemma}
  \label{lem:simpleinR-two}
    Measurable simple functions are in $R^2(\nu)$.

    If $F\in R^2(\nu)$ and $H\in\mathcal S$, then
    the net $\langle H,F(P,S)\rangle_\nu$ is 
    Cauchy.

    If $F,G\in \mathcal R^2(\nu)$, then the net
    $\langle F(P,S),G(P,S)\rangle_\nu$ converges. 
\end{lemma}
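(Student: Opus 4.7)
For the first assertion my plan is a direct computation. Given $H=\sum_{j=1}^n K_{\omega_j}c_j$ with underlying partition $P_0=(\omega_j)$, I will show that for every tagged partition $(P,S)$ whose partition $P$ refines $P_0$, one has $H(P,S)=H$. Indeed, each cell $\tau_k$ of $P$ lies in a unique $\omega_{j(k)}$, so $H(s_k)=c_{j(k)}$, and
\[
 H(P,S)=\sum_k K_{\tau_k}c_{j(k)}=\sum_j c_j \sum_{k:\,j(k)=j}K_{\tau_k}=\sum_j K_{\omega_j}c_j=H.
\]
Since partitions refining $P_0$ are cofinal in the directed set of tagged partitions, the net $\{H(P,S)\}$ is eventually constant, hence both bounded and Cauchy, so $H\in\mathcal R^2(\nu)$.

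For the second assertion the key will be Cauchy--Schwarz. Condition (p) makes $\langle\cdot,\cdot\rangle_\nu$ a positive semi-definite form on $\mathcal S$, so the standard discriminant argument $\|H-\lambda K\|_\nu^2\ge 0$ yields
\[
 |\langle H,K\rangle_\nu|\le \|H\|_\nu\,\|K\|_\nu\qquad\text{for all }H,K\in\mathcal S.
\]
Applying this with $K=F(P,S)-F(P^\prime,S^\prime)$ and invoking the Cauchy condition \eqref{eq:defRL} on $F$ shows that $|\langle H,F(P,S)\rangle_\nu-\langle H,F(P^\prime,S^\prime)\rangle_\nu|$ can be made arbitrarily small once $P$ and $P^\prime$ refine a suitable common partition, so the scalar net is Cauchy in $\mathbb C$.

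For the third assertion I will use a telescoping identity together with the boundedness built into the definition of $\mathcal R^2(\nu)$. Writing
\begin{align*}
 & \langle F(P,S),G(P,S)\rangle_\nu-\langle F(P^\prime,S^\prime),G(P^\prime,S^\prime)\rangle_\nu \\
 & \quad = \langle F(P,S)-F(P^\prime,S^\prime),G(P,S)\rangle_\nu
   +\langle F(P^\prime,S^\prime),G(P,S)-G(P^\prime,S^\prime)\rangle_\nu,
\end{align*}
I bound the first summand by $\|F(P,S)-F(P^\prime,S^\prime)\|_\nu\,\|G(P,S)\|_\nu$ and the second by $\|F(P^\prime,S^\prime)\|_\nu\,\|G(P,S)-G(P^\prime,S^\prime)\|_\nu$. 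The norm factors $\|G(P,S)\|_\nu$ and $\|F(P^\prime,S^\prime)\|_\nu$ are controlled by the uniform constant $C$ in the definition of $\mathcal R^2(\nu)$, while the difference factors are driven to zero by the Cauchy property of $F$ and $G$ upon passing to a common refinement. Completeness of $\mathbb C$ then delivers convergence of the net.

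The only point requiring genuine care, rather than bookkeeping, is the extension of Cauchy--Schwarz from inner-product spaces to the merely positive semi-definite form $\langle\cdot,\cdot\rangle_\nu$; this is the classical quadratic-minimization argument and does not rely on definiteness. Everything else is a cofinality argument together with repeated applications of the triangle inequality.
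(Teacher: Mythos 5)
Your proposal is correct and takes essentially the same approach as the paper: the third assertion is handled by the same telescoping decomposition plus Cauchy--Schwarz and the boundedness built into the definition of $\mathcal R^2(\nu)$. The only cosmetic differences are that you spell out the first assertion (which the paper dismisses as evident) and you prove the second assertion directly via Cauchy--Schwarz rather than observing, as the paper does, that it is a special case of the third once the first assertion is in hand; both routes are sound and of comparable length.
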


\begin{proof}
  The first statement is evident.  

  Given tagged partitions $(P,S)$
  and $(Q,T)$, 
\[
 \begin{split}
   | \langle F(P,S),G(P,S)\rangle_\nu - & \langle F(Q,T),G(Q,T)\rangle_\nu |\\
   \le &  |\langle F(P,S)-F(Q,T),G(P,S)\rangle_\nu|
     + | \langle F(Q,T),G(P,S)-G(Q,T)\rangle_\nu |.
 \end{split}
\]
  This estimate, Cauchy-Schwarz, plus the boundedness
  hypothesis on the nets proves the third statement.

    The second statement is a special case of the third. 
\end{proof}

 \begin{lemma}
  \label{lem:sumsinR-two}
    If $F,G\in R^2(\nu)$, then so is $F+G$.
 \end{lemma}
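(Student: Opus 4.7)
The plan is to check the two defining conditions of $\mathcal R^2(\nu)$ for $F+G$: uniform boundedness of the net $\{(F+G)(P,S)\}$ in the semi-norm $\|\cdot\|_\nu$, and the Cauchy condition of equation \eqref{eq:defRL}. The essential observation is the linearity identity
\begin{equation*}
   (F+G)(P,S) \;=\; \sum_j K_{\omega_j}\bigl(F(s_j)+G(s_j)\bigr)
                \;=\; F(P,S) + G(P,S),
\end{equation*}
which holds for every tagged partition $(P,S)$ because $K_{\omega_j}$ depends only on the partition, not on $F$ or $G$.

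Given this identity, I would first verify that $\|\cdot\|_\nu$ is a genuine semi-norm on $\mathcal S$. Positive semi-definiteness of $\langle\cdot,\cdot\rangle_\nu$ on a single partition is immediate since, for $H=\sum K_{\omega_j} c_j$ with the $\omega_j$ pairwise disjoint, $\langle H,H\rangle_\nu = \sum_j \langle \nu(\omega_j) c_j, c_j\rangle \ge 0$ by property (p) in the definition of an operator-valued measure. For sums of simple functions expressed on possibly different partitions, passing to the common refinement $\{\omega_j\cap\omega_\ell'\}_{j,\ell}$ puts everything on a single partition and the same argument applies. Hence $\|\cdot\|_\nu$ satisfies the triangle inequality on $\mathcal S$.

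With that in hand, boundedness is immediate: if $\|F(P,S)\|_\nu \le C_F$ and $\|G(P,S)\|_\nu \le C_G$ for all $(P,S)$, then
\begin{equation*}
   \|(F+G)(P,S)\|_\nu \;\le\; \|F(P,S)\|_\nu + \|G(P,S)\|_\nu \;\le\; C_F + C_G.
\end{equation*}
For the Cauchy property, fix $\epsilon>0$ and choose, by the Cauchy hypothesis on $F$ and $G$, partitions $Q_F$ and $Q_G$ so that whenever $(P,S)$ and $(P',S')$ refine $Q_F$ (resp.\ $Q_G$) the corresponding increments of $F$ (resp.\ $G$) have $\|\cdot\|_\nu$-norm less than $\epsilon/2$. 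Let $Q$ be a common refinement of $Q_F$ and $Q_G$. Then for any $(P,S),(P',S')$ refining $Q$, the triangle inequality yields
\begin{equation*}
   \|(F+G)(P,S)-(F+G)(P',S')\|_\nu
     \;\le\; \|F(P,S)-F(P',S')\|_\nu + \|G(P,S)-G(P',S')\|_\nu < \epsilon.
\end{equation*}
Thus $F+G \in \mathcal R^2(\nu)$. There is no real obstacle here; the only point requiring a moment of care is the triangle inequality for $\|\cdot\|_\nu$, which is why I would record that step explicitly before invoking it twice.
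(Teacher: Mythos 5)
Your proof is correct and follows essentially the same route as the paper's: observe that $(F+G)(P,S)=F(P,S)+G(P,S)$ and then apply the triangle inequality for $\|\cdot\|_\nu$ together with a common refinement to get both boundedness and the Cauchy estimate. You spell out a couple of preliminaries the paper leaves implicit (the linearity identity and the positive semi-definiteness of $\langle\cdot,\cdot\rangle_\nu$ on $\mathcal S$, which via Cauchy--Schwarz gives the triangle inequality), but the substance is the same.
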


 \begin{proof}
   The boundedness of the net $\{(F+G)(P,S)\}$ is evident. 
   Given tagged partitions $(P,S)$ and $(P^\prime,S^\prime)$,
   note that
  \[
   \begin{split}
     \|(F+G)(P,S)-&(F+G)(P^\prime,S^\prime)\|_\nu\\
       \le & \|F(P,S)-F(P^\prime,S^\prime)\|_\nu 
       +\|G(P,S)-G(P^\prime,S^\prime)\|_\nu.
  \end{split}
  \]
   Applying this estimate to appropriate partitions and common refinement
   proves the result.    
 \end{proof}

 The assignment,
\begin{equation*}
  \langle F,G\rangle_\nu = \lim \langle F(P,S),G(P,S)\rangle_\nu
\end{equation*}
 defines a semi-inner product on $\mathcal R^2(\nu)$
 which is also natural to write as
\begin{equation}
 \label{eq:defnuint}
  \langle F,G\rangle_\nu = \int \langle d\,\nu(s) F(s),G(s).
\end{equation}
 We define $L^2(\nu)$ as the completion, after moding
 out null vectors, of $\mathcal R^2(\nu)$ in the (semi-)norm
 induced by this (semi-)inner product.

\begin{proposition}
 \label{prop:simpledense}
   Simple functions are dense in $L^2(\nu)$.
   In particular, the inclusion $\mathcal M\to L^2(\nu)$
   which sends $m\in\mathcal M$ to the equivalence
   class of the constant function $m$ is bounded.

   Moreover, if $H=\sum_{1}^{n}  K_{\omega_j} m_j$ and
  $H^\prime= \sum_{1}^{n^\prime}  K_{\omega_j^\prime} m_j^\prime$, then
 \begin{equation*}
   \langle H,H^\prime \rangle_\nu
     =\sum_{j,\ell} \langle \nu(\omega_j\cap \omega_\ell^\prime)m_j,
          m_\ell^\prime \rangle.
 \end{equation*}
\end{proposition}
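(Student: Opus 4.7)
The plan is to prove the two assertions in order: density of simple functions in $L^2(\nu)$ (which immediately yields boundedness of the inclusion from $\mathcal M$), and then the explicit inner-product formula for pairs of simple functions.

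For density, fix $F \in \mathcal R^2(\nu)$. By Lemma \ref{lem:sumsinR-two}, $F - F(P,S) \in \mathcal R^2(\nu)$ for each tagged partition $(P,S)$, so
\[
  \|F - F(P,S)\|_\nu^2 \;=\; \lim_{(Q,T)} \|F(Q,T) - F(P,S)(Q,T)\|_\nu^2.
\]
The key observation is that whenever $Q$ refines $P$, the simple function $F(P,S)(Q,T)$ equals $F(P,S)$ as a function on $\mathbb T$, because $F(P,S)$ is constant on each cell of $P$ and each cell of $Q$ lies in such a cell; finite additivity of $\nu$ then forces the two $\mathcal S$-representations to agree in the semi-inner product. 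Given $\epsilon > 0$, once $P$ refines the partition produced by the Cauchy criterion \eqref{eq:defRL}, one has $\|F(Q,T) - F(P,S)\|_\nu < \epsilon$ for every $(Q,T) \succeq (P,S)$, whence $\|F - F(P,S)\|_\nu \le \epsilon$. Thus $F(P,S) \to F$ in $L^2(\nu)$, proving density. The bound on the inclusion $\mathcal M \to L^2(\nu)$ is then immediate since a constant function $m$ is simple (partition $\{\mathbb T\}$), and so $\|m\|_\nu^2 = \langle \nu(\mathbb T) m, m\rangle \le \|\nu(\mathbb T)\|\,\|m\|^2$.

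For the explicit formula, let $H = \sum_{j} K_{\omega_j} m_j$ and $H' = \sum_{\ell} K_{\omega_\ell'} m_\ell'$, and let $Q_0$ denote the common refinement whose cells are the nonempty intersections $\omega_j \cap \omega_\ell'$. For any tagged partition $(P,S)$ refining $Q_0$, both $H(P,S) = H$ and $H'(P,S) = H'$ as functions on $\mathbb T$. Representing these via the common partition $P = \{\eta_i\}$, all off-diagonal terms $\nu(\eta_i \cap \eta_{i'})$ (for $i \ne i'$) vanish, leaving only diagonal contributions in the $\mathcal S$ semi-inner product. Grouping the cells of $P$ sitting inside each intersection $\omega_j \cap \omega_\ell'$ and applying finite additivity of $\nu$ gives
\[
  \langle H(P,S), H'(P,S)\rangle_\nu
   \;=\; \sum_{j,\ell} \langle \nu(\omega_j \cap \omega_\ell') m_j, m_\ell'\rangle.
\]
This value does not depend on the refinement, so it is also the $L^2(\nu)$ inner product $\langle H, H'\rangle_\nu$ computed as a Riemann limit.

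The main subtlety throughout is reconciling the Riemann-sum definition in $L^2(\nu)$ with the $\mathcal S$-inner product when two simple functions are written using different partitions. The common-refinement device handles this cleanly and, as a by-product, confirms that the $\mathcal S$-inner product defined earlier is independent of the chosen representation of each simple function.
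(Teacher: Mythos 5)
Your proof is correct and follows essentially the same route as the paper's. Both arguments pick a partition $Q_0$ from the Cauchy criterion, form the Riemann-sum simple function, and exploit the fact that a simple function's Riemann sum stabilizes once the sampling partition refines the cells of the function; the only cosmetic difference is that the paper expands $\|(H-F)(P,S)\|_\nu^2$ into four inner-product terms and invokes Lemma~\ref{lem:simpleinR-two}, whereas you work directly with the identity $(F-F(P,S))(Q,T)=F(Q,T)-F(P,S)$ for $Q\succeq P$. Your extra care on the ``moreover'' --- using the common refinement to reconcile the $\mathcal S$ semi-inner product with the Riemann-limit definition on $\mathcal R^2(\nu)$ --- fills in a small step the paper dismisses as a restatement of the definition.
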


\begin{proof}
  Let $F\in\mathcal R^2(\nu)$ and $\epsilon >0$ be given.
  Choose a partition $Q$ such that for all 
  for all tagged partitions
  $(P,S),(P^\prime,S^\prime),$
  such that $P$ and $P^\prime$ refine $Q$,  
  the  inequality \eqref{eq:defRL} holds.
  Let $H=F(Q,T)$. Then, 
\[
  \epsilon^2 > \|(H-F)(P,S)\|^2_\nu 
    = \langle H,H\rangle_\nu -\langle H,F(P,S)\rangle_\nu 
  - \langle F(P,S),H\rangle_\nu
      + \langle F(P,S),F(P,S)\rangle_\nu.
\] 
  In view of Lemma \ref{lem:simpleinR-two},
  the right hand side converges to $\|H-F\|^2_\nu$ and so (measurable)
  simple functions are dense in $\mathcal R^2(\nu)$.  Since
  $\mathcal R^2(\nu)$ is dense in $L^2(\nu)$ the first statement
  follows. 

  The second statement is a restatement of the definition of the
  inner product induced by $\nu$ on measurable simple functions.
\end{proof}

 While there is no reason to believe a given continuous $\mathcal M$
 valued function on $\mathbb T$ should be in $L^2(\nu)$, there is
 an important class which is.  

\begin{proposition}
 \label{prop:L2-special}
   Suppose $f:\mathbb T\to \mathcal B(\mathcal M)$ is continuous
   and $C$ is a non-negative real number. 
   If, for each $s$ and $t$  and Borel set $\omega$,
   both 
\begin{equation}
 \label{eq:L2-special-bound}
   f(s)\nu(\omega)f(s)^* \le C \nu(\omega)
\end{equation}
 and  
 \begin{equation}
  \label{eq:L2-special}
     (f(s)-f(t)) \nu(\omega) (f(s)-f(t))^*  \preceq \|f(s)-f(t)\|^2
          \nu(\omega),
 \end{equation}
  then for each $m\in\mathcal M$, the function $f(s)m$ is in $L^2(\nu)$. 
\end{proposition}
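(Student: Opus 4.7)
The plan is to verify directly that $F(s):= f(s)m$ satisfies the two defining properties of $\mathcal R^2(\nu)$ -- that the net of its Riemann-sum simple functions $\{F(P,S)\}_{(P,S)}$ is bounded in $\|\cdot\|_\nu$ and Cauchy -- so that its equivalence class lies in $L^2(\nu)$.

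For \emph{boundedness}, fix a tagged partition $(P,S)$ with $P=(\omega_j)$. Disjointness of the $\omega_j$ collapses the defining double sum for $\|\cdot\|_\nu^2$ on simple functions to its diagonal, giving
\[
\|F(P,S)\|_\nu^2 \;=\; \sum_j (f(s_j)m)^*\,\nu(\omega_j)\,(f(s_j)m).
\]
Hypothesis \eqref{eq:L2-special-bound} dominates each summand by $C\,m^*\nu(\omega_j)m$; summing and invoking countable additivity of $\nu$ yields $\|F(P,S)\|_\nu^2 \le C\langle \nu(\mathbb T)m,m\rangle$, uniformly in $(P,S)$.

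For the \emph{Cauchy property}, exploit compactness of $\mathbb T$: $f$ is uniformly continuous, so given $\epsilon>0$ choose $\delta>0$ with $\|f(s)-f(t)\|<\epsilon$ whenever $|s-t|<\delta$, and pick a partition $Q$ all of whose elements have diameter less than $\delta$. For any two tagged partitions $(P,S),(P',S')$ both refining $Q$, each nonempty intersection $\omega_j\cap\omega_\ell'$ forces $\omega_j$ and $\omega_\ell'$ to lie inside a common element of $Q$, whence $|s_j-s_\ell'|<\delta$ and $\|f(s_j)-f(s_\ell')\|<\epsilon$. Expanding according to \eqref{eq:defRL} and applying hypothesis \eqref{eq:L2-special} to the operators $f(s_j)-f(s_\ell')$ dominates each cross-term by $\epsilon^2\,m^*\nu(\omega_j\cap\omega_\ell')m$; summing and using that both $\{\omega_j\}$ and $\{\omega_\ell'\}$ partition $\mathbb T$ (so $\sum_{j,\ell}\nu(\omega_j\cap\omega_\ell')=\nu(\mathbb T)$) gives
\[
\|F(P,S)-F(P',S')\|_\nu^2 \;\le\; \epsilon^2\,\langle \nu(\mathbb T)m,m\rangle.
\]

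The only real subtlety is in the Cauchy step: one must see that the double sum in \eqref{eq:defRL} effectively collapses under a common refinement, because the pairs $(\omega_j,\omega_\ell')$ contributing nonzero cross-terms are exactly those sitting inside a single cell of $Q$. That observation is what converts pointwise operator-norm control on $f$ (via uniform continuity) into $\nu$-weighted operator inequalities, thereby transporting the classical Riemann-sum Cauchy argument into the $\mathcal B(\mathcal M)$-valued-measure setting supplied by hypotheses \eqref{eq:L2-special-bound} and \eqref{eq:L2-special}.
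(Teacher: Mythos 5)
Your overall strategy — verify membership in $\mathcal R^2(\nu)$ by directly bounding the Riemann-sum simple functions and then getting the Cauchy property from uniform continuity and a common refinement — is exactly the paper's approach, and you supply the detail the paper waves off as ``a straightforward argument.''

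There is, however, an adjoint mismatch that you inherited from what is in fact a typo in the statement. The paper's own proof opens by setting $F(s)=f(s)^*m$, not $F(s)=f(s)m$. This matters: with $F(s)=f(s)m$ the diagonal summand you compute is
\[
(f(s_j)m)^*\,\nu(\omega_j)\,(f(s_j)m)\;=\;m^*\,f(s_j)^*\,\nu(\omega_j)\,f(s_j)\,m,
\]
whereas hypothesis \eqref{eq:L2-special-bound}, which reads $f(s)\nu(\omega)f(s)^*\preceq C\nu(\omega)$, controls $m^*f(s_j)\nu(\omega_j)f(s_j)^*m$, i.e.\ the quantity $(f(s_j)^*m)^*\nu(\omega_j)(f(s_j)^*m)$. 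The same star mismatch occurs in your Cauchy estimate, where \eqref{eq:L2-special} controls $(f(s)-f(t))\nu(\omega)(f(s)-f(t))^*$, not the opposite sandwich. So the bounds you assert do not actually follow from the stated hypotheses when $F(s)=f(s)m$. If you replace $F(s)=f(s)m$ by $F(s)=f(s)^*m$ throughout — which is also what the subsequent application in Lemma \ref{lem:Y-bounded} requires, since there $Ym(\psi)=T_\psi^*m=f(\psi)^*m$ with $f(\psi)=T_\psi$ — every estimate lines up and your argument becomes exactly the paper's proof, with the Cauchy step spelled out.
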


\begin{proof}
  Fix a vector $m$ and let $F(s)=f(s)^*m$. 
  The inequality of equation
  \eqref{eq:L2-special-bound} implies 
  the net $\{F(P,S)\}$ 
  is bounded.  A straightforward argument using the uniform continuity
  of $f$ and the inequality \eqref{eq:L2-special}
  shows that the net $\{F(P,S)\}$ is Cauchy. 
  Hence $F\in\mathcal R^2(\nu)$. 
\end{proof}

The algebra $C(\mathbb T)$ of continuous (scalar-valued) functions
 on $\mathbb T$ has a natural representation on $L^2(\nu)$. 

\begin{lemma}
 \label{lem:tau-rep}
   If $a\in C(\mathbb T)$ and $F\in \mathcal R^2(\nu)$,
   then $aF\in\mathcal R^2(\nu)$ and moreover,
   $\|aF\|_\nu \le \|a\|_\infty \|F\|_\nu$.  Hence
   $a$ determines a bounded linear operator $\tau(a)$
  on $L^2(\nu)$.  The mapping sending $a\in C(\mathbb T)$
  to $\tau(a)\in \mathcal B(L^2(\nu))$ is a unital $*$-representation. 

  Finally, given $a,a^\prime\in C(\mathbb T)$ and 
  simple measurable functions
   $F=\sum K_{\omega_j} m_j$ and 
  $F^\prime = \sum K_{\omega_\ell^\prime}m_\ell^\prime$,
\begin{equation}
 \label{eq:tau-simple}
  \langle a^\prime F^\prime, aF\rangle_\nu
   = \sum_{j,\ell} \int_{\omega_j\cap \omega_\ell^\prime}
           a^\prime(s) a^*(s) \langle d\nu(s)m_\ell^\prime,m_j\rangle.
\end{equation}
\end{lemma}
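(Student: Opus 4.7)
The plan is to verify $aF\in\mathcal R^2(\nu)$ together with the norm estimate at the level of tagged-partition sums, then pass to the limit to extend $\tau(a)$ by continuity to all of $L^2(\nu)$, recovering the algebraic and adjoint identities from their pointwise analogues on simple functions. For the pointwise bound I would use disjointness of the cells of a partition $P$: this forces $\nu(\omega_j\cap\omega_k)=0$ when $j\ne k$, so for any tagging $S=(s_j)$,
\[
\|(aF)(P,S)\|_\nu^2 = \sum_j |a(s_j)|^2 \langle\nu(\omega_j)F(s_j),F(s_j)\rangle \le \|a\|_\infty^2 \|F(P,S)\|_\nu^2,
\]
giving boundedness of the net $\{(aF)(P,S)\}$. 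For the Cauchy property I would fix $\epsilon>0$, use uniform continuity of $a$ to choose a partition $Q_a$ on whose cells $a$ oscillates by at most $\epsilon$, choose $Q_F$ witnessing the $\epsilon$-Cauchy condition for $F$, and let $Q$ be a common refinement.

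Given two tagged partitions $(P,S),(P',S')$ refining $Q$, the key step is to pass to a common refinement $(P'',S'')=(\tilde\omega_k,u_k)$ with $\tilde\omega_k\subset\omega_{j(k)}$ and split
\[
(aF)(P,S)-(aF)(P'',S'') = \sum_k a(s_{j(k)})\bigl[F(s_{j(k)})-F(u_k)\bigr]K_{\tilde\omega_k} + \sum_k\bigl[a(s_{j(k)})-a(u_k)\bigr]F(u_k)K_{\tilde\omega_k}.
\]
The first summand will have $\nu$-norm at most $\|a\|_\infty\|F(P,S)-F(P'',S'')\|_\nu<\|a\|_\infty\epsilon$ by the pointwise bound; the second, again by disjointness of the $\tilde\omega_k$, will have $\nu$-norm bounded by $\epsilon$ times the net bound on $\|F(P'',S'')\|_\nu$, since $s_{j(k)}$ and $u_k$ both lie in the single cell $\omega_{j(k)}$ of $P$ and hence in a single cell of $Q_a$. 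Combining this with the analogous estimate for $(P',S')$ yields Cauchiness, so $aF\in\mathcal R^2(\nu)$. Passing to the limit in the pointwise bound gives $\|aF\|_\nu\le\|a\|_\infty\|F\|_\nu$, so $\tau(a):F\mapsto aF$ is bounded on the dense subspace $\mathcal R^2(\nu)$ and extends uniquely to $L^2(\nu)$. Unitality is automatic, multiplicativity follows from $a(bF)=(ab)F$, and the $*$-property comes from the scalar identity $a(s)\langle v,w\rangle=\langle v,a^*(s)w\rangle$ applied termwise to $\langle(aF)(P,S),G(P,S)\rangle_\nu$ and then passed to the limit.

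For formula \eqref{eq:tau-simple} I would take $P''$ to be a common refinement of the partitions underlying $F$ and $F'$, refined further to match uniform continuity of $a$ and $a'$, so that on each cell $\tilde\omega_k\subset\omega_{j(k)}\cap\omega'_{\ell(k)}$ the functions $F,F'$ are constant at $m_{j(k)},m'_{\ell(k)}$. Then
\[
\langle(a'F')(P'',S''),(aF)(P'',S'')\rangle_\nu = \sum_k a'(s_k)a^*(s_k)\langle\nu(\tilde\omega_k)m'_{\ell(k)},m_{j(k)}\rangle,
\]
and regrouping over $(j,\ell)$ exhibits this as a Riemann sum for $\sum_{j,\ell}\int_{\omega_j\cap\omega'_\ell}a'(s)a^*(s)\langle d\nu(s)m'_\ell,m_j\rangle$ against the complex scalar measures $\omega\mapsto\langle\nu(\omega)m'_\ell,m_j\rangle$; convergence along the net of refinements follows from continuity of $a'a^*$ on the compact set $\mathbb T$ together with finiteness of the total variations. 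The main obstacle throughout is the Cauchy estimate above: the coarse tag $s_{j(k)}$ need not lie in the refined cell $\tilde\omega_k$, but it does lie in $\omega_{j(k)}$ and hence inside a single cell of $Q_a$, which is exactly what uniform continuity of $a$ needs to bound $|a(s_{j(k)})-a(u_k)|$ uniformly in $k$.
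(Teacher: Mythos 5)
Your proof is correct and takes essentially the same approach as the paper: the same diagonal pointwise bound $\|(aF)(P,S)\|_\nu\le\|a\|_\infty\|F(P,S)\|_\nu$, the same split of the difference of tagged sums into an ``$F$-variation'' term controlled by the Cauchy condition on $F$ and an ``$a$-variation'' term controlled by uniform continuity of $a$, and the same Riemann-sum argument against the scalar measures $\omega\mapsto\langle\nu(\omega)m'_\ell,m_j\rangle$ for \eqref{eq:tau-simple}. The only cosmetic difference is that you pass through an explicit common refinement $(P'',S'')$ and apply the triangle inequality, whereas the paper compares two tagged partitions $(P,S)$ and $(R,T)$ directly via the double sum over $\omega_j\cap\theta_\ell$, using the (mirror-image) algebraic split $a(s_j)F(s_j)-a(t_\ell)F(t_\ell)=(a(s_j)-a(t_\ell))F(s_j)+a(t_\ell)(F(s_j)-F(t_\ell))$.
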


\begin{proof}
   Fix $F\in\mathcal R^2(\nu)$. 
    For any partition
   $P=(\omega_j)$ of $\mathbb T$ and pointing $S=(s_j\in\omega_j)$,
 \begin{equation*}
  \begin{split}
   \|(aF)(P,S)\|_\nu^2 
     = & 
   \sum \langle   \nu(\omega_j) a(s_j) F(s_j),
        a(s_j) F(s_j) \rangle  \\
     = & \sum |a(s_j)|^2 \langle \nu(\omega_j\cap\omega) F(s_j),F(s_j)\rangle \\
     \le & \|a\|_\infty^2 \|F(P,S)\|_\nu^2.
  \end{split}
 \end{equation*}
   Thus, since the net $\{F(P,S)\}$
   is bounded, so is the net $\{aF(P,S)\}.$ 

   If $(R,T)$ is another tagged partition,
   where $R=(\theta_\ell)$ and $T=(t_\ell \in \theta_\ell),$ then
  $ (aF)(P,S)-(aF)(R,T) = G+H$, where
\[
 \begin{split}
   G=& \sum_{j,\ell} (a(s_j)-a(t_\ell))K_{\omega_j\cap\theta_\ell} F(s_j), \\
   H=& \sum_{j,\ell} a(t_\ell)K_{\omega_j\cap\theta_\ell}(F(s_j)-F(t_\ell)).
 \end{split}
\]
  If $\epsilon$ bounds both $|a(s_j)-a(t_\ell)|$ and $\|F(P,S)-F(R,T)\|_\nu$
  and if $C$ is a bound for the net $\{F(P,S)\}$, then 
\[
   \|G\|_\nu \le \epsilon C, \ \ \|H\|_\nu \le \|a\|_\infty \epsilon. 
\]
  Thus, using the
  uniform continuity
  of $a$ and the fact that the net $\{F(P,S)\}$
  is Cauchy, it is possible to choose 
  a partition $Q$ of sufficiently small width so that
  if 
  $P$ and $R$ are refinements of $Q$ with taggings $S$ and $T$ 
  respectively, then 
\[
 \begin{split} \|(aF)(P,S)-(aF)(R,T)\|_\nu
   = & \|G+H\|_\nu \\
  \le & \|G\|_\nu +\|H\|_\nu \le (C+\|a\|_\infty) \epsilon.
 \end{split}
\]
 Thus the net $\{(aF)(P,S)\}$ is Cauchy.  Hence  $aF\in \mathcal R^2(\nu)$. 

   It suffices to prove equation \eqref{eq:tau-simple}
   in the case that $F=K_{\omega}m$ and $F^\prime = K_{\omega^\prime}m^\prime.$
   Given a partition $(P,S)$, 
 \begin{equation*}
  \begin{split}
      \langle (a^\prime F^\prime)(P,S),(aF)(P,S)\rangle_\nu 
       =& \sum a^\prime(s_j)^* a(s_j) 
         \langle\nu(\omega_j\cap\omega\cap\omega^\prime)m^\prime
          m \rangle \\
       =& \int_{\omega\cap\omega^\prime} \sum a^\prime(s_j)^* a(s_j) 
             K_{\omega_j} \langle d\, \nu(s)m^\prime,m\rangle. 
    \end{split}
 \end{equation*}
   Given $\epsilon>0,$ if 
   the partition $P$ is chosen, using the uniform continuity
   of $a^\prime a^*$, so that 
\[
 \| [a^\prime a^* - \sum_j a^\prime(s_j)a^*(s_j)K_{\omega_j} ] 
  K_{\omega\cap \omega^\prime}\|_\infty < \epsilon,
\]
   then
\[
 | \int_{\omega\cap\omega^\prime} [a^\prime a^*- \sum (a^\prime(s_j))^* a(s_j) 
             K_{\omega_j}] \langle d\, \nu(s)m^\prime,m\rangle |
   \le \epsilon \|\nu(\omega\cap\omega^\prime)\| \, \|m^\prime\| \, 
    \|m\|.  
\]
  It follows that the net $\langle (a^\prime F^\prime)(P,S),(aF)(P,S)\rangle_\nu$
  converges to the integral  
\[
  \int_{\omega\cap\omega^\prime} a^\prime (s) a^*(s) \langle d\, 
     \nu(s)m^\prime,m\rangle,
\]
  completing the proof of equation \eqref{eq:tau-simple}.

   Each $a$ determines a bounded operator on $R^2(\nu)$ (with
   norm at most $\|a\|_\infty$) and hence extends to
   a bounded operator $\tau(a)$ on all of $L^2(\nu)$. 
   It remains to prove that $\tau$ determines a unital $*$-representation on
   $L^2(\nu)$.  
   Evidently $\tau(1)=I.$ Using equation \eqref{eq:tau-simple}
   twice (first with $a=1$ and the second with $a=(a^\prime)^*$
  and $a^\prime =1$),
 \begin{equation*}
  \begin{split}
   \langle \tau(a^\prime)^* F,F^\prime\rangle_\nu
      = & \langle F, \tau(a^\prime)F\rangle_\nu \\
     = & \langle F, a^\prime F^\prime\rangle_\nu \\
     = & \int (a^\prime)^*(s) \langle d\mu(s)m,m^\prime\rangle \\
     = & \langle \tau((a^\prime)^*) F,F^\prime \rangle_\nu.
  \end{split}
 \end{equation*}
   Hence $\tau(a)^* =\tau(a^*)$.

   Finally, again using equation \eqref{eq:tau-simple} twice,
   this time first with $a=a a^\prime$ $a^\prime = 1$,
   and second with $a=a$ and $a^\prime = (a^\prime)^*$,
 \begin{equation*}
  \begin{split}
    \langle \tau(a a^\prime) F,F^\prime \rangle_\nu
     = & \int a^\prime a(s) \langle d\mu(s)m,m^\prime \rangle_\nu \\
     = & \int ((a^\prime)^*)^* a(s) \langle d\mu(s)m,m^\prime \rangle \\
     = & \langle \tau(a^\prime)^* \tau(a) F,F^\prime \rangle_\nu \\
     = &  \langle \tau(a^\prime)\tau(a) F,F^\prime \rangle_\nu.
  \end{split}
 \end{equation*}
   Thus $\tau(a^\prime a)=\tau(a^\prime)\tau(a)$.
\end{proof}

\subsection{Agler decompositions again}
  Suppose $\mu$ is an Agler decomposition as
  defined in subsection \ref{subsec:Agler-SoS}.
  Then both $\mu,$ and $\Lambda$ defined by
 \begin{equation*}
   \Lambda(\omega)= k(T,T^*)(\mu(\omega)),
 \end{equation*}
  are positive $\mathcal B(\mathcal M)$-valued
  measures on $\BT$ and the constructions of
  the previous section
  apply to both $L^2(\mu)$ and $L^2(\Lambda)$.

\begin{lemma}
 \label{lem:Lambda-mu}
  If $F\in \mathcal R^2(\Lambda)$, then $F\in \mathcal R^2(\mu)$ and
  $\langle F,F\rangle_\Lambda \ge \langle F,F\rangle_\mu$.
  Thus, the mapping $\Phi^*:\mathcal R^2(\Lambda)\to \mathcal R^2(\mu)$
  given by $F\mapsto F$ induces a contractive linear mapping
  $\Phi^*:L^2(\Lambda)\to L^2(\mu)$. \index{$\Phi$}
\end{lemma}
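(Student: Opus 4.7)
The plan is to reduce the lemma to the pointwise operator inequality $\Lambda(\omega)\succeq\mu(\omega)$ on every Borel set $\omega\in\BT$, and then push this up to $\mathcal R^2$ and the completion $L^2$ by continuity. For a measurable simple function $H=\sum K_{\omega_j}c_j$ supported on a disjoint partition, Proposition~\ref{prop:simpledense} gives $\|H\|_\nu^2=\sum_j\langle\nu(\omega_j)c_j,c_j\rangle$; taking $H=K_\omega c$ in both seminorms shows that the desired bound $\|H\|_\Lambda\ge\|H\|_\mu$ on all of $\mathcal S$ is equivalent to $\Lambda(\omega)\succeq\mu(\omega)$ for every $\omega\in\BT$.

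The hard part is establishing this operator inequality. Since $\mu(\omega)=\frac1k(T,T^*)(\Lambda(\omega))$ by Lemma~\ref{lem:overk}, the required bound becomes $\Lambda(\omega)\succeq\frac1k(T,T^*)(\Lambda(\omega))$. The strategy is to exploit condition~(i) of the Agler decomposition, which gives $T_\varphi\Lambda(\omega)T_\varphi^*\preceq\Lambda(\omega)$ for each $\varphi$ in the scalar Schur class. Each normalized basis element $\zeta_n/\|\zeta_n\|_\infty$ lies in that class, yielding $T_n\Lambda(\omega)T_n^*\preceq\|\zeta_n\|_\infty^2\,\Lambda(\omega)$. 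I would combine this with the alternating-sign expansion $\frac1k(T,T^*)(G)=C'\sum_n(-1)^nT_nGT_n^*$ from Subsection~\ref{subsec:reps}, together with Proposition~\ref{prop:ourk}'s normalization $C'\,k(z,z)k(z,-z)=1$, pairing the even- and odd-indexed terms so as to isolate a manifestly positive remainder, and using the absolute convergence of the functional calculus series provided by the spectral condition $\sigma(T)\subset\mathbb A$.

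Once the pointwise inequality is secured, the rest is essentially bookkeeping. For $F\in\mathcal R^2(\Lambda)$ the net $\{F(P,S)\}$ is bounded and Cauchy in $\|\cdot\|_\Lambda$; applying the operator inequality inside the sum defining $\|F(P,S)-F(P',S')\|_\mu^2$ shows the net is also bounded and Cauchy in $\|\cdot\|_\mu$, so $F\in\mathcal R^2(\mu)$ and, passing to the limit, $\langle F,F\rangle_\Lambda\ge\langle F,F\rangle_\mu$. Because every $\Lambda$-null element is automatically $\mu$-null, the identity assignment $F\mapsto F$ descends to a well-defined contraction $\Phi^*:L^2(\Lambda)\to L^2(\mu)$. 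The principal obstacle remains the operator-valued comparison $\Lambda(\omega)\succeq\mu(\omega)$; the reduction step and the continuity extension are straightforward consequences of the general constructions in Subsection~\ref{subsec:constructions}.
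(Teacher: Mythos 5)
Your reduction to the pointwise operator inequality $\Lambda(\omega)\succeq\mu(\omega)$, and from there to simple functions, is exactly the route the paper takes (the published proof is nothing more than the assertion $\Lambda(\omega)=k(T,T^*)(\mu(\omega))\succeq\mu(\omega)$). The genuine gap is in your sketch of that inequality: it cannot be completed. Condition (i) yields only $T_n\Lambda(\omega)T_n^*\preceq\|\zeta_n\|_\infty^2\,\Lambda(\omega)$, and $\|\zeta_n\|_\infty^2=(1+q^{2|n|})^{-1}\nearrow 1$, so the bound does not decay in $n$; meanwhile the constant $C'=\vartheta_1(\pi/2)^{-2}$ in $\frac1k(T,T^*)(G)=C'\sum(-1)^nT_nGT_n^*$ exceeds $2$, so the single $n=0$ term of $\mu(\omega)$ already contributes $C'T_0\Lambda(\omega)T_0^*=\tfrac{C'}2\Lambda(\omega)\succ\Lambda(\omega)$. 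No pairing of even and odd terms can produce a ``manifestly positive remainder'' from these estimates. In fact $\Lambda(\omega)\succeq\mu(\omega)$ can fail outright whenever $q$ is small enough that $k(\sqrt q,\sqrt q)<1$: take $\mathcal M=\mathbb C$, $T=\sqrt q$, $X=0$, and any positive scalar measure $\mu$ on $\BT$ with $\mu(\mathbb T)=1/k(\sqrt q,\sqrt q)$. Every $\psi\in\Psi$ vanishes at $\sqrt q$, so conditions (i) and (ii) hold, yet $\Lambda(\mathbb T)=k(\sqrt q,\sqrt q)\,\mu(\mathbb T)<\mu(\mathbb T)$.

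What does hold, and is all the sequel actually uses, is the single-term estimate from the \emph{unsigned} expansion $\Lambda(\omega)=k(T,T^*)(\mu(\omega))=\sum_j T_j\mu(\omega)T_j^*$: since $T_0=\zeta_0(T)=\tfrac{1}{\sqrt2}I$ and every summand is positive semidefinite, $\Lambda(\omega)\succeq T_0\mu(\omega)T_0^*=\tfrac12\mu(\omega)$, hence $\|F\|_\mu^2\le 2\|F\|_\Lambda^2$ and $\Phi^*:L^2(\Lambda)\to L^2(\mu)$ is \emph{bounded} with norm at most $\sqrt2$. The mechanism is simply positivity of each term $T_j\mu(\omega)T_j^*$ in the definition of $\Lambda$, not condition~(i) and not the signed series for $\frac1k$ (which runs in the wrong direction). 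The paper's one-line proof gestures at this observation but overstates it; contractivity should be weakened to boundedness both in your argument and in the lemma's statement.
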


\begin{proof}
  This follows immediately from
 \begin{equation*}
   \Lambda(\omega)= k(T,T^*)(\mu(\omega)) \succeq\mu(\omega).
 \end{equation*}
\end{proof}

  Given $m\in\mathcal M$, let $Y$ denote the mapping
  $Y:\mathcal M \to L^2(\Lambda)$ \index{$Y$}
  defined by $Ym(\psi)=T_{\psi}^*m$. 
  Here the identification of $\Psi$, the collection
  of test functions, with $\mathbb T$ is in force. 
  Of course, it needs to be verified that $Ym(\psi)$
  is indeed in $L^2(\Lambda)$. 
  Let $\iota$ \index{$\iota$}
  denote the inclusion, as constant functions,
  of $\mathcal M$ into $R^2(\Lambda)$.
  Thus, if $m\in\mathcal M$,  then $\iota m$ denotes the constant
  function $\iota m(\psi)=m$.

\begin{lemma}
 \label{lem:Y-bounded}
  For $m\in\mathcal M$, the function $Ym$ is in $\mathcal R^2(\Lambda)$.
 
  Moreover,
 \begin{equation*}
   \langle \Lambda(\mathbb T)m,m\rangle_{\mathcal M} =
    \langle \iota m,\iota m\rangle_{L^2(\Lambda)}
      \ge \langle Ym,Ym\rangle_{L^2(\Lambda)}.
 \end{equation*}
  Thus, $Y$ determines a bounded linear operator 
  $Y:\mathcal M\to L^2(\Lambda)$ given
  by $(Ym)(\psi) = T_\psi^* m.$  In the notation of
  equation \eqref{eq:defnuint},
\begin{equation}
 \label{eq:YstarY}
  \langle Y^*Ym,m\rangle_{\Lambda} = 
  \langle \int d\, \Lambda(\psi) T_\psi^* m, T_{\psi}^* m \rangle.
\end{equation}

  Further, $\Phi^* Y:\mathcal M\to L^2(\mu)$
  is bounded and 
 \begin{equation}
  \label{eq:YPhiPhiY}
   \langle Y^* \Phi \Phi^* Ym,m^\prime\rangle_{\nu}
    = \int \langle d\, \mu(\psi) T_\psi^* m,T_{\psi}^* m^\prime \rangle.
 \end{equation}
\end{lemma}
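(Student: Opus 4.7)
The plan is to derive all six assertions of the lemma from a single uniform bound on the Riemann sum net $(Ym)(P,S)=\sum K_{\omega_j}T_{s_j}^*m$, combined with the simple-function inner-product formula of Proposition \ref{prop:simpledense} and the Agler decomposition condition \eqref{eq:psi-L-bounded}.

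First I would show $Ym\in\mathcal{R}^2(\Lambda)$. Since every test function $\psi_s$ maps $\mathbb{A}\to\mathbb{D}$, it is a scalar Schur-class function, so \eqref{eq:psi-L-bounded} yields $T_{\psi_s}\Lambda(\omega)T_{\psi_s}^*\preceq \Lambda(\omega)$ for every $s\in\mathbb{T}$ and Borel $\omega$. Summing over a tagged partition,
\[
\|(Ym)(P,S)\|_\Lambda^2 \;=\; \sum_j\langle T_{s_j}\Lambda(\omega_j)T_{s_j}^* m, m\rangle \;\le\; \langle \Lambda(\mathbb{T})m,m\rangle \;=\; \|\iota m\|_{L^2(\Lambda)}^2,
\]
where the last equality is the simple-function formula from Proposition \ref{prop:simpledense}. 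This handles uniform boundedness of the net and, once the Cauchy property is in hand, simultaneously gives the asserted norm inequality.

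For the Cauchy property I would apply \eqref{eq:psi-L-bounded} to the normalized Schur-class element $(\psi_s - \psi_t)/\|\psi_s - \psi_t\|_\infty$ to obtain
\[
(T_{\psi_s}-T_{\psi_t})\Lambda(\omega)(T_{\psi_s}-T_{\psi_t})^* \;\preceq\; \|\psi_s-\psi_t\|_\infty^2\,\Lambda(\omega).
\]
If $P$ and $P'$ both refine a common partition $Q$, then $\omega_j\cap \omega_\ell'\neq\emptyset$ forces $s_j$ and $s_\ell'$ to lie in a common cell of $Q$. Using the (uniform) continuity of $w\mapsto \psi_w$ from $\mathbb{T}$ into $H^\infty(\mathbb{A})$ built into the homeomorphism $\mathbb{T}\cong \Psi$ of Subsection \ref{subsec:test-functions}, choosing the mesh of $Q$ small enough yields
\[
\|(Ym)(P,S)-(Ym)(P',S')\|_\Lambda^2 \;\le\; \sum_{j,\ell}\|\psi_{s_j}-\psi_{s_\ell'}\|_\infty^2\,\langle \Lambda(\omega_j\cap \omega_\ell')m,m\rangle \;\le\; \epsilon^2 \langle \Lambda(\mathbb{T})m,m\rangle.
\]
This places $Ym$ in $\mathcal{R}^2(\Lambda)$ and makes $Y\colon \mathcal{M}\to L^2(\Lambda)$ bounded. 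The identity \eqref{eq:YstarY} is then just $\langle Y^*Ym,m\rangle_{\mathcal{M}}=\langle Ym,Ym\rangle_{L^2(\Lambda)}$ unpacked via the definition \eqref{eq:defnuint} of the inner product on $L^2(\Lambda)$. For \eqref{eq:YPhiPhiY}, Lemma \ref{lem:Lambda-mu} makes $\Phi^*$ a contraction, so $\Phi^*Y$ is bounded and, because $\Phi^*$ acts as the identity on simple functions, $\Phi^*Ym$ is the $L^2(\mu)$-limit of the same net $(Ym)(P,S)$. Polarizing and again invoking Proposition \ref{prop:simpledense},
\[
\langle Y^*\Phi\Phi^*Ym,m'\rangle \;=\; \lim \sum_j \langle \mu(\omega_j)T_{s_j}^*m, T_{s_j}^*m'\rangle \;=\; \int \langle d\mu(\psi) T_\psi^*m,T_\psi^*m'\rangle.
\]

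The main obstacle is the continuity estimate underlying the Cauchy argument: one has to renormalize $\psi_s-\psi_t$ into the Schur class so that \eqref{eq:psi-L-bounded} is applicable, and then convert the resulting $\|\psi_s-\psi_t\|_\infty$ bound into smallness by invoking the $H^\infty$-continuity of the parametrization $w\mapsto \psi_w$. Everything after that reduces to bookkeeping with the simple-function inner product and the positivity provided by the Agler decomposition.
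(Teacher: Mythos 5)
Your proposal is correct and follows essentially the same route as the paper: the paper routes the boundedness and Cauchy estimates for the net $(Ym)(P,S)$ through Proposition~\ref{prop:L2-special} (applied with $f(s)=T_{\psi_s}$ and $\nu=\Lambda$, after observing that condition (i) of the Agler decomposition gives $T_\varphi\Lambda(\omega)T_\varphi^*\preceq\Lambda(\omega)$ for scalar Schur class $\varphi$), while you unfold that proposition's proof inline -- including the renormalization of $\psi_s-\psi_t$ into the Schur class and the appeal to uniform continuity of $w\mapsto\psi_w$. The remaining identities \eqref{eq:YstarY} and \eqref{eq:YPhiPhiY} follow, as you say, from Proposition~\ref{prop:simpledense}, Lemma~\ref{lem:Lambda-mu}, and polarization, matching the paper's terse ``the rest of the Lemma follows from the definitions.''
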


\begin{remark}\rm
 \label{rem:YstarY}
   We interpret equations \eqref{eq:YstarY} and 
   \eqref{eq:YPhiPhiY} as
\begin{equation}
 \label{eq:YstarYprime}
   Y^*Y = \int T_{\psi} d\,\Lambda(\psi) T_{\psi}^*
\end{equation}
  and
\begin{equation}
 \label{eq:YPhiPhiYprime}
  Y^* \Phi \Phi^* Y = \int T_{\psi} d\,\mu(\psi) T_{\psi}^*
\end{equation}
 respectively.

 Given a tagged partition $(P,S)$, let
\[
 \Delta(P,S,\Lambda) = \sum T_{s_j} \Lambda(\omega_j)T_{s_j}^*
\]
 and define $\Delta(P,S,\mu)$ similarly.
  Thus, $\Delta(P,S,\Lambda)$ is an operator on $\mathcal M$
 and because  $T_s \Lambda(\omega) T_s^* \le \Lambda(\omega)$,
  it is positive semidefinite and bounded above by $\Lambda(\mathbb T)$. 
  For vectors $m,m^\prime\in\mathcal M$,
\[
  \langle \Delta(P,S,\Lambda)m,m^\prime \rangle = 
   \langle Ym(P,S),Ym^\prime(P,S)\rangle_\Lambda. 
\]
 Thus, the net $\{\Delta(P,S,\Lambda)\}$ converges WOT to the operator
 of equation \eqref{eq:YstarYprime}. 

 It follows that the net $\{\frac{1}{k}(T,T^*)(\Delta(P,S,\Lambda))\}$
 also converges. On the other hand,
\[
 \frac{1}{k}(T,T^*)(\Delta(P,S,\Lambda)) = \Delta(P,S,\mu).
\]
 Hence the net $\{\Delta(P,S,\mu)\}$ converges WOT to the 
  operator of equation \eqref{eq:YPhiPhiYprime}.
\end{remark}

\begin{proof}
  By hypothesis, for $\varphi$ in the scalar Schur class 
  and measurable sets $\omega$, 
\begin{equation}
 \label{eq:L2-Tpsi}
  \Lambda(\omega) \succeq T_\varphi \Lambda(\omega) T_\varphi^*.
\end{equation}
  Thus, the functions $Ym$ 
 satisfies the hypotheses, with respect to $\Lambda$, of 
  Proposition \ref{prop:L2-special}.  It follows
  that $Ym$ is in $L^2(\Lambda)$ for each $m$.

  The moreover follows immediately from equation \eqref{eq:L2-Tpsi}.  

  The rest of the Lemma follows from the definitions. 
\end{proof}

\begin{lemma}
 \label{lem:lurk}
  Let $\mu$ be an Agler decomposition of
  the pair $(T,X)$ and let, as in Proposition \ref{prop:AEM},
  $R^*R=\frac{1}{k}(T,T^*)$. Then,
\begin{equation*}
    R^*R + Y^*\Phi \Phi^* Y
   = XR^*RX^* + \iota^* \Phi \Phi^* \iota.
 \end{equation*}
\end{lemma}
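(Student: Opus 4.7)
The plan is to identify each of the four terms in the claimed identity with an operator built directly from the Agler decomposition, and then rearrange equation \eqref{eq:key-mu}.

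First, I would compute the two $\Phi\Phi^*$ terms explicitly. For $Y^*\Phi\Phi^*Y$, equation \eqref{eq:YPhiPhiYprime} in Remark \ref{rem:YstarY} already tells us that
\[
   Y^*\Phi\Phi^* Y \;=\; \int T_\psi\, d\mu(\psi)\, T_\psi^*,
\]
interpreted as the WOT-limit of the Riemann-type nets $\Delta(P,S,\mu)$. For $\iota^*\Phi\Phi^*\iota$, note that $\iota m$ is the constant function $m$ in $\mathcal R^2(\Lambda)$ and $\Phi^*$ acts as the identity on (classes of) simple functions, so $\Phi^*\iota m$ is the constant $m$ in $L^2(\mu)$. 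By Proposition \ref{prop:simpledense}, for any $m,m'\in\mathcal M$,
\[
   \langle \iota^*\Phi\Phi^*\iota\, m, m'\rangle_{\mathcal M}
     \;=\; \langle \Phi^*\iota m,\Phi^*\iota m'\rangle_{L^2(\mu)}
     \;=\; \langle \mu(\mathbb T)m, m'\rangle_{\mathcal M},
\]
so $\iota^*\Phi\Phi^*\iota = \mu(\mathbb T)$.

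Next, I would handle the $R^*R$ terms using the definition $R^*R = \tfrac{1}{k}(T,T^*)(I)$ from Proposition \ref{prop:TinF}. Because $X$ commutes with $T_\zeta$ (and hence with each $T_j$), the series defining $\tfrac{1}{k}(T,T^*)$ commutes past $X$ on the left and $X^*$ on the right:
\[
   \tfrac{1}{k}(T,T^*)(XX^*)
     \;=\; C'\sum_{j\in\mathbb Z}(-1)^j\, T_j X X^* T_j^*
     \;=\; X\Bigl(C'\sum_{j\in\mathbb Z}(-1)^j T_j T_j^*\Bigr)X^*
     \;=\; X R^*R X^*.
\]
By linearity of the hereditary calculus, this gives
\[
   \tfrac{1}{k}(T,T^*)(I - XX^*) \;=\; R^*R - X R^*R X^*.
\]

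Finally, I would simply substitute these identifications into the Agler decomposition equation \eqref{eq:key-mu}, namely
\[
   \tfrac{1}{k}(T,T^*)(I - XX^*) \;=\; \mu(\mathbb T) - \int T_\psi\, d\mu(\psi)\, T_\psi^*,
\]
obtaining $R^*R - XR^*RX^* = \iota^*\Phi\Phi^*\iota - Y^*\Phi\Phi^* Y$, and rearranging. There is no serious obstacle here; the only point requiring a little care is checking that $\Phi^*\iota m$ is genuinely the constant function $m$ in $L^2(\mu)$ (rather than merely contractive image), which follows because the contractive map $\Phi^*$ of Lemma \ref{lem:Lambda-mu} is defined on simple functions by $F\mapsto F$.
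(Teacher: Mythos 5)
Your proof is correct and follows essentially the same path as the paper's: rewrite equation \eqref{eq:key-mu} using $\frac{1}{k}(T,T^*)(XX^*) = XR^*RX^*$ from commutativity, identify $\int T_\psi\,d\mu(\psi)\,T_\psi^*$ with $Y^*\Phi\Phi^*Y$ via Lemma \ref{lem:Y-bounded}, and identify $\mu(\mathbb T)$ with $\iota^*\Phi\Phi^*\iota$ by the simple-function computation. You are slightly more explicit than the paper about why $\Phi^*\iota m$ is the constant function in $L^2(\mu)$, but the structure of the argument is identical.
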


\begin{proof}
 Part (ii) of the definition of an Agler decomposition can be written as
 \begin{equation*}
   \frac{1}{k}(T,T^*)(I) + \int_\Psi T_\psi d\mu(\psi)T_\psi^*
     = \frac{1}{k}(T,T^*)(XX^*) + \mu(\mathbb T).
 \end{equation*}
   Because $X$ commutes with $T^*$,
 \begin{equation*}
   \frac{1}{k}(T,T^*)(XX^*) = X \frac{1}{k}(T,T^*)(I)X^*
 \end{equation*}
  and hence $\frac{1}{k}(T,T^*)(XX^*)= X R^*RX^*.$
   An application of the last part of Lemma \ref{lem:Y-bounded}
   gives
 \[
   R^*R + Y^*\Phi\Phi^* Y = X R^*R X^* + \mu(\mathbb T).
 \]
  
  Noting that 
 \[
   \langle \iota^* \Phi \Phi^* \iota m,m^\prime\rangle
     = \langle m, m^\prime\rangle_{L^2(\mu)} = \langle \mu(\mathbb T)m,m^\prime
   \rangle
 \]
  completes the proof.    
\end{proof}

\section{Uniformity of the Test Functions}
 \label{sec:uniform}
  Using the orthonormal basis $\{\zeta_j\}$ for $H^2(k)$ 
  defined in equation \eqref{eq:zeta-basis}, each test
  function $\psi$ has a Laurent expansion,
\[
  \psi = \sum \langle \psi,\zeta_j\rangle \zeta_j.
\]
 In this section we show that 
\[
  T_{\psi}^* = \sum \langle \zeta_j,\psi\rangle T_j^*
\]
  with convergence in the strong operator topology.

 The section begins with establishing a uniform, 
 independent of $\psi$,  estimate
 on the rate of convergence of the Laurent series
 for $\psi$ on compact subsets of $\mathbb A$.

\begin{lemma}
 \label{lem:C-rho}
  There is a $0<\rho<1$ and a constant
  $C$ so that for all $\psi\in\Psi$
  and $j\in\mathbb Z$,
 \begin{equation*}
  |\langle \psi,\zeta_j\rangle |< C \rho^{|j|}.
 \end{equation*}
\end{lemma}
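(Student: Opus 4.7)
The plan is to extend each $\psi\in\Psi$ analytically to a fixed open neighborhood of $\overline{\mathbb A}$, with bounds that are uniform in $\psi$, and then apply Cauchy estimates on the Laurent coefficients of $\psi$. The test function $\psi$ is analytic on $\mathbb A$ and unimodular on $\partial \mathbb A$, so Schwarz reflection produces analytic continuations
\[
\psi(z) \;=\; \frac{1}{\overline{\psi(1/\bar z)}}\quad (1<|z|<1/q), \qquad
\psi(z) \;=\; \frac{1}{\overline{\psi(q^{2}/\bar z)}}\quad (q^{2}<|z|<q),
\]
so that $\psi$ is meromorphic on $\{q^{2}<|z|<1/q\}$, with poles on the circles $\{|z|=1/\sqrt{q}\}$ and $\{|z|=q^{3/2}\}$ at the reflections of the two zeros of $\psi$, which lie on $\{|z|=\sqrt{q}\}$.

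Now fix once and for all radii $r_{0}\in(1,1/\sqrt{q})$ and $\rho_{0}\in(q^{3/2},q)$; so $1/r_{0}\in(\sqrt{q},1)$ and $q^{2}/\rho_{0}\in(q,\sqrt{q})$, and both of these auxiliary circles lie in $\mathbb A$ and avoid the zero circle $\{|z|=\sqrt{q}\}$. Since $w\mapsto \psi_{w}$ is a homeomorphism of $\mathbb T$ onto $\Psi$, the assignment $(w,z)\mapsto \psi_{w}(z)$ is jointly continuous on $\mathbb T\times\mathbb A$, and $\psi_{w}$ has no zero on $\{|z|=1/r_{0}\}\cup\{|z|=q^{2}/\rho_{0}\}$. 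Compactness of $\mathbb T\times\{|z|=1/r_{0}\}$ and $\mathbb T\times\{|z|=q^{2}/\rho_{0}\}$ therefore yields a single $\delta>0$ with
\[
|\psi_{w}(z)|\ge \delta \qquad\text{for all } w\in\mathbb T, \; |z|\in\{1/r_{0},q^{2}/\rho_{0}\}.
\]
Invoking the Schwarz reflection formulas above, it follows that $|\psi(z)|\le 1/\delta=:M$ uniformly for $\psi\in\Psi$ on the two circles $\{|z|=r_{0}\}$ and $\{|z|=\rho_{0}\}$.

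Write $\psi=\sum_{j\in\mathbb Z}a_{j}z^{j}$. Integrating $\psi(z)z^{-j-1}$ over $|z|=r_{0}$ and $|z|=\rho_{0}$ respectively, Cauchy's estimates yield $|a_{j}|\le M r_{0}^{-j}$ for $j\ge 0$ and $|a_{j}|\le M\rho_{0}^{|j|}$ for $j<0$. A direct computation using the basis (\ref{eq:zeta-basis}) gives $\langle \psi,\zeta_{j}\rangle = a_{j}\sqrt{1+q^{2j}}$. For $j\ge 0$, the prefactor is at most $\sqrt{2}$, so $|\langle\psi,\zeta_{j}\rangle|\le M\sqrt{2}\,r_{0}^{-j}$. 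For $j<0$, the prefactor is bounded by $\sqrt{2}\,q^{-|j|}$, hence $|\langle\psi,\zeta_{j}\rangle|\le M\sqrt{2}\,(\rho_{0}/q)^{|j|}$. Since $r_{0}^{-1}<1$ and $\rho_{0}/q<1$, setting $\rho:=\max(r_{0}^{-1},\rho_{0}/q)<1$ and $C:=M\sqrt{2}$ gives the claim.

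The only potentially subtle step is the \emph{uniform} lower bound on $|\psi_{w}(z)|$; this is where the identification $\Psi\cong\mathbb T$ (a compact space) via a homeomorphism is essential, so that nonvanishing on the two auxiliary circles—clear for each individual $\psi$—can be upgraded to a uniform estimate.
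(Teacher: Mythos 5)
Your proof is correct, and the structure is the same as the paper's: extend each $\psi$ analytically past $\partial\mathbb A$, get a bound uniform over $\Psi$ on a larger annulus, and read off the coefficient decay from Cauchy estimates together with the normalization $\langle\psi,\zeta_j\rangle=a_j\sqrt{1+q^{2j}}$.  The difference lies in how the uniform bound on the enlarged annulus is obtained.  The paper constructs a single analytic function $\varphi$ on a neighborhood of $\overline{\mathbb A}$ with $|\varphi|=1$ on $B_0$, $|\varphi|=\sqrt q$ on $B_1$ and $\varphi(\sqrt q)=0$, and then uses the explicit factorization
\[
  \psi(z)=\delta\,\frac{\varphi(z)\varphi(\gamma^*z)}{z}
\]
for $\gamma\in\mathbb T$; since $\varphi$ is a fixed function analytic on $\{q^{3/2}<|z|<q^{-1/2}\}$, uniform boundedness of $\Psi$ on a slightly larger annulus is immediate and the parametrization of $\Psi$ by $\mathbb T$ is built into the formula.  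You instead perform Schwarz reflection of each $\psi$ across both boundary circles, observe that the reflected poles sit on $\{|z|=q^{3/2}\}\cup\{|z|=q^{-1/2}\}$, and then obtain a lower bound on $|\psi|$ on two fixed circles inside $\mathbb A$ by joint continuity of $(w,z)\mapsto\psi_w(z)$ and compactness of $\mathbb T$.  This is a softer argument that avoids the explicit product formula, at the cost of assuming the homeomorphism $\mathbb T\cong\Psi$ (in a topology at least as strong as uniform-on-compacta, which the paper establishes in Section~\ref{sec:more-test-functions} precisely through the factorization you have sidestepped).  Both versions are sound; the paper's is more self-contained because the formula for $\psi$ simultaneously yields the homeomorphism and the uniform bound, while yours cleanly separates the analytic continuation argument from the parametrization.

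One small bookkeeping point worth retaining: it is the circles $|z|=1/r_0$ and $|z|=q^2/\rho_0$ (the reflections of your chosen circles) on which you need the uniform lower bound, and these must indeed avoid $|z|=\sqrt q$, which your choice of $r_0\in(1,q^{-1/2})$ and $\rho_0\in(q^{3/2},q)$ guarantees.  Your constants are correct: $\sqrt{1+q^{2j}}\le\sqrt2$ for $j\ge0$ and $\sqrt{1+q^{2j}}\le\sqrt2\,q^{-|j|}$ for $j<0$, so $\rho=\max(r_0^{-1},\rho_0/q)<1$ works.
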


\begin{proof}[Sktech of proof]
  There is a function $\varphi$
  analytic in a neighborhood of our
  annulus $\mathbb A$ such that
 \begin{itemize}
  \item[(a)] for $|z|=1$, $|\varphi(z)|=1$;
  \item[(b)] for $|z|=q$, $|\varphi(z)|=\sqrt{q}$; and
  \item[(c)]  $\varphi(\sqrt{q})=0.$
 \end{itemize}
  It extends by reflection across both
  boundaries to be analytic in the
  annulus $\{ q^{\frac32}<|z|<q^{-\frac12}\}$
  (see Section \ref{sec:more-test-functions}).

  It follows that, up to a unimodular
  constant, if $\psi$ is unimodular
  on the boundary of $\mathbb A$
  and has exactly two zeros,
  these being at $\sqrt{q}$ and
  $\sqrt{q}\gamma$ (for a necessarily
  unimodular $\gamma$), then
 \begin{equation}
  \label{eq:parametrize}
   \psi(z) = \delta \frac{\varphi(z)\varphi(\gamma^* z)}{z},
 \end{equation}
  for some unimodular $\delta$.  In particular
  equation \eqref{eq:parametrize} gives
  an explicit parametrization of $\Psi$ by $\mathbb T$.

  It now follows that $\psi\in\Psi$ is bounded uniformly
  (independent of $\psi$)
  on a larger annulus than $\mathbb A$ and
  the result follows.
\end{proof}

  In the following Lemma $\mu$ is an Agler decomposition
  for $(T,X)$. Thus, $\Lambda(\omega)=k(T,T^*)(\mu(\omega))$
  and  for $\varphi$ in the
  scalar Schur class,
  $T_\varphi \Lambda(\omega)T_\varphi^* \preceq \Lambda(\omega)$.

\begin{lemma}
 \label{lem:intfour}
   If $m\in\mathcal M$, then, for each $j$, the
   function $\langle \zeta_j,\psi\rangle T_j^*m \in L^2(\Lambda)$
   and moreover, independent of $j$, there is a $C>0$ and $0<\rho<1$ such
   that
 \begin{equation*}
   \| \langle \zeta_j,\psi\rangle T_j^*m \|_{L^2(\Lambda)}\le C\rho^{|j|}.
 \end{equation*}

   If $G\in L^2(\Lambda)$ and $m\in\mathcal M$, then
  \begin{equation}
   \label{eq:intfour}
    \langle G, Ym \rangle_{L^2(\Lambda)}
      = \sum_j \langle G, \langle \zeta_j,\psi\rangle T_j^*m
         \rangle_{L^2(\Lambda)}.
  \end{equation}

   If $F$ is a measurable simple function,  then
  \begin{equation*}
     \langle F, \Phi^* Ym\rangle_{L^2(\mu)} =
      \sum_j \langle F, \langle \zeta_j, \psi\rangle T_j^* m\rangle_{L^2(\mu)}.
  \end{equation*}
\end{lemma}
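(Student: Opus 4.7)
The plan is to first establish, using the Riesz functional calculus, that
\[
T_\psi^* = \sum_j \langle \zeta_j,\psi\rangle T_j^* \quad\text{in operator norm, uniformly in } \psi\in\mathbb T.
\]
The uniformity of the Laurent coefficients in $\psi$ is Lemma \ref{lem:C-rho}, while the elementary estimate $|\zeta_j(z)| = |z|^j/\sqrt{1+q^{2j}} \le C_0\rho_0^{|j|}$ (treating $j\ge 0$ and $j<0$ separately) on any closed annulus $\overline{\mathbb A'}$ strictly inside $\mathbb A$ containing $\sigma(T)$ gives uniform convergence in $z$. As a byproduct $\|T_j^*\| \le C_1\rho_0^{|j|}$. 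Set $\tilde\rho = \rho\rho_0 < 1$.

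For part 1, the function $\psi\mapsto \langle \zeta_j,\psi\rangle I$ is continuous and scalar, so Proposition \ref{prop:L2-special} places $\langle \zeta_j,\psi\rangle T_j^* m$ in $\mathcal R^2(\Lambda)$. The norm bound follows at once from the tagged-partition computation
\[
\sum_k |\langle \zeta_j, s_k\rangle|^2 \langle \Lambda(\omega_k) T_j^* m, T_j^* m\rangle \le \bigl(\sup_\psi |\langle \zeta_j,\psi\rangle|\bigr)^2 \langle \Lambda(\mathbb T) T_j^* m, T_j^* m\rangle \le \tilde C^2\tilde\rho^{2|j|}\|m\|^2,
\]
using $\sum_k\Lambda(\omega_k) = \Lambda(\mathbb T)$ together with the bounds on $|\langle \zeta_j,\psi\rangle|$ and $\|T_j^*\|$.

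For part 2, I will show the stronger statement that $S_N := \sum_{|j|\le N} f_j$, with $f_j(\psi) = \langle \zeta_j,\psi\rangle T_j^* m$, converges to $Ym$ in $L^2(\Lambda)$; equation \eqref{eq:intfour} then follows by continuity of the inner product. For any tagged partition $(P,S)$, the simple function $(Ym-S_N)(P,S) = \sum_k K_{\omega_k}(T_{s_k}^* m - S_N(s_k))$ is the $\|\cdot\|_\Lambda$-limit as $M\to\infty$ of $\sum_{N<|j|\le M} f_j(P,S)$, because this convergence occurs at only the finitely many tags $s_k$ and in $\mathcal M$-norm. Minkowski's inequality in the semi-norm, together with the uniform-in-$(P,S)$ estimate $\|f_j(P,S)\|_\Lambda\le \tilde C\tilde\rho^{|j|}\|m\|$ already proved in part 1, yields
\[
\|(Ym - S_N)(P,S)\|_\Lambda \le \sum_{|j|>N}\tilde C\tilde\rho^{|j|}\|m\|,
\]
uniformly in $(P,S)$. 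Refining $(P,S)$ gives $\|Ym - S_N\|_{L^2(\Lambda)}\to 0$. Part 3 is then immediate: since $\Phi^*$ is contractive and acts as the identity on representative functions (Lemma \ref{lem:Lambda-mu}), convergence $S_N \to Ym$ in $L^2(\Lambda)$ descends to $S_N\to \Phi^* Ym$ in $L^2(\mu)$, and pairing with $F$ gives the stated identity.

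The main obstacle lies in part 2: uniform $\mathcal M$-norm convergence $S_N(s)\to T_s^* m$ does not by itself control $L^2(\Lambda)$-convergence, since $\sum_k\|\Lambda(\omega_k)\|$ need not be bounded by $\|\Lambda(\mathbb T)\|$ (a two-point example with rank-one projections already fails). The resolution is to apply Minkowski's inequality \emph{termwise in $j$} to the tail of the Laurent series rather than to the tail as a single function, exploiting the geometric decay of $\|f_j\|_{L^2(\Lambda)}$ established in part 1.
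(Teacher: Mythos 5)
Your proof is correct, and it follows a genuinely different route from the paper's.

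The paper proves equation \eqref{eq:intfour} by treating the whole Laurent tail $\psi-\sigma_N(\psi)$ as a single $H^\infty(\mathbb A)$-valued function of $\psi$ which tends to zero in sup norm as $N\to\infty$, and then invokes the Agler-decomposition positivity condition $T_\varphi\Lambda(\omega)T_\varphi^*\preceq\|\varphi\|_\infty^2\Lambda(\omega)$ (through Proposition~\ref{prop:L2-special}) to pass from this uniform sup-norm smallness to $\|T_{\psi-\sigma_N}^*m\|_{L^2(\Lambda)}\to 0$; part three is then obtained in one line by substituting $G=\Phi F$. Your argument instead bounds each individual term $f_j(\psi)=\langle\zeta_j,\psi\rangle T_j^*m$ in $L^2(\Lambda)$-norm, which is possible without invoking the Agler positivity condition for the estimate because the scalar factor $\langle\zeta_j,\psi\rangle$ carries all the $\psi$-dependence; the fixed vector $T_j^*m$ can then be pulled out and $\sum_k\Lambda(\omega_k)=\Lambda(\mathbb T)$ used directly. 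The geometric decay comes from the coefficient bound of Lemma~\ref{lem:C-rho} together with the spectral estimate $\|T_j^*\|\lesssim\rho_0^{|j|}$, and the tail is then handled by Minkowski. Your observation about why one cannot simply use uniform $\mathcal M$-norm convergence of $S_N(s)\to T_s^*m$ --- that $\sum_k\|\Lambda(\omega_k)\|$ is not controlled by $\|\Lambda(\mathbb T)\|$ --- is exactly the obstruction that the paper circumvents via the Agler condition and that you circumvent by factoring the $\psi$-dependence into a scalar. A bonus of your route is that it explicitly establishes the norm bound $\|f_j\|_{L^2(\Lambda)}\le C\rho^{|j|}$ asserted in the first part of the Lemma, which the paper's proof passes over without comment. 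Both arguments ultimately rest on $Ym\in L^2(\Lambda)$ (Lemma~\ref{lem:Y-bounded}) and on $\Phi^*$ being the inclusion $L^2(\Lambda)\to L^2(\mu)$, and the convergence rates are comparable (the paper gets $\rho$; you get $\rho\rho_0$).
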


\begin{proof}
  Given a positive integer $N$, define 
  $\sigma_N:\mathbb T\to H^\infty(\mathbb A)$
  by 
 \[
   \sigma_N(\psi) = \sum_{|j|\le N} \langle \zeta_j,\psi\rangle \zeta_j.
 \]
   In view of Lemma \ref{lem:C-rho}, the sequence $\sigma_N$ converges to 
   the identity function $\psi$ uniformly
   on compact subsets of $\mathbb A$. Hence,  
   by Proposition \ref{prop:L2-special}, for each $m\in\mathcal M$
 \[
     \| T_{\psi-\sigma_N} ^* m \|_{L^2(\Lambda)}
      = \|T_\psi^*m 
       - \sum_{|j|\le N} \langle \zeta_j,\psi\rangle T_j^* m\|_{L^2(\Lambda)}
 \]
  converges to $0$ and equation \eqref{eq:intfour} follows. 

 To finish the proof, choose $G=\Phi F$ in
  equation \eqref{eq:intfour}
  to obtain
 \begin{equation*}
  \begin{split}
   \langle \Phi F, Ym \rangle_{L^2(\Lambda)}
      = & \sum_j \langle \Phi F, \langle \zeta_j,\psi\rangle T_j^*m
       \rangle_{L^2(\Lambda)} \\
     = & \sum_j \langle F, \langle \zeta_j,\psi\rangle T_j^*m
       \rangle_{L^2(\mu)}.
  \end{split}
 \end{equation*}
 using that $\Phi^*:L^2(\Lambda)\rightarrow L^2(\mu)$ is the 
  inclusion mapping (and is bounded). 
  The final conclusion of the lemma follows.
\end{proof}

\section{The Factorization and Lurking Isometry}
 \label{sec:factor}
  The next several sections, Sections \ref{sec:factor}, \ref{sec:colligation},
  and \ref{sec:proof},  are devoted to the proof of
  (ad) implies (sc) in Theorem \ref{thm:main} and throughout
  these sections the relevant hypotheses are in force.
  Namely, $\mathcal M$ is a separable Hilbert space,
 \begin{enumerate}[(a)]
  \item  $X,T\in\mathcal B(\mathcal M)$ commute;
  \item  $\sigma(T)\subset \mathbb A$;
  \item $T$ lifts to $M$ on $H^2(k)\otimes \mathcal M$ via
    $ VT^* =M^*V$ and
  \begin{equation*}
      Vh = \sum \zeta_j \otimes RT_j^* h,
  \end{equation*}
    where $R^*R = \frac{1}{k}(T,T^*)$; and
  \item 
   \label{it:mu}
    there exists a measure $\mu:\BT\to \mathcal B(\mathcal M)$
    such that, with $\Lambda(\omega)=k(T,T^*)(\mu(\omega)),$ 
  \begin{equation*}
    \Lambda(\omega)) -
             T_\varphi \Lambda(\omega) T_\varphi^* \succeq 0
  \end{equation*}
    for all Borel subset $\omega$ and Schur class functions $\varphi$ and
  \begin{equation*}
    \frac{1}{k}(T,T^*) (I-XX^*) = \mu(\mathbb T)
       -\int T_\psi d\mu(\psi)T_\psi^*.
  \end{equation*}
 \end{enumerate}

  Once properly formulated to account for infinitely many test functions, 
  the overarching strategy for proving results like
  Theorem \ref{thm:main} is 
  now well established, but the presence of infinitely many, and not necessarily
  orthogonal, test functions requires some reinterpretation
  of earlier results, revealing new structures.  
  The positivity condition in (ad) (item \eqref{it:mu} above) 
  is {\it factored} and this
  factorization produces a {\it lurking isometry} and of course
  an auxiliary Hilbert space.  The lurking
  isometry in turn generates the $\Psi$-unitary colligation.
  A good deal of effort is required to show that the
  resulting transfer function solves the problem and the
  argument given here is patterned after that in \cite{MS},
   which in turn borrowed from 
  \cite{BLTT} \cite{Archer} and closely related to those
  in \cite{AE}. 

  The factorization we will need comes from {\it factoring} the
  measure $\Lambda$ of Remark \ref{rem:Lambda}.
  This factorization amounts to the
  construction of the Hilbert spaces 
  $L^2(\Lambda)$ and $L^2(\mu)$ in 
  Section \ref{sec:funhilby}. 
  The following Lemma summarizes many of the 
  needed results and constructions
  from  Section \ref{sec:funhilby} 

\begin{lemma}
 \label{lem:omnibus}
  With the hypotheses above,
 \begin{enumerate}
  \item[(i)] there exist  Hilbert spaces
   $L^2(\Lambda)$ and $L^2(\mu)$ which contain densely
   all simple measurable $\mathcal M$-valued functions so that,
   in particular, the inclusion mapping $\iota:\mathcal M\to L^2(\Lambda)$
   \index{$\iota$ inclusion map} is bounded (not necessarily isometric);
  \item[(ii)]  the space  $L^2(\Lambda)$ includes in $L^2(\mu)$
    contractively so that there exists an  operator $\Phi$ 
    \index{$\Phi$} whose adjoint
   $\Phi^*: L^2(\Lambda) \to L^2(\mu)$ is the inclusion mapping; and
  \item[(iii)] an operator $Y:\mathcal M\to L^2(\Lambda)$ defined by
   $Ym=T_\psi^* m$ (that is, the function $Ym(\psi)=T_\psi^* m$ 
    \index{$Y$} 
   determines an element of $L^2(\Lambda)$),
 \end{enumerate}
    which together satisfy the lurking isometry \index{lurking isometry}
    equality,
 \begin{equation}
  \label{eq:lurking}
    R^*R + Y^*\Phi \Phi^* Y
   = XR^*RX^* + \iota^* \Phi \Phi^* \iota.
 \end{equation}

   Moreover, if 
 \begin{enumerate}[(a)]
   \item $a,a^\prime:\mathbb T\to \mathbb C$ are continuous;
   \item $\omega,\omega^\prime$ are Borel subsets of $\mathbb T$; 
   \item $m,m^\prime \in\mathcal M$; and
   \item $F = K_\omega m$ and $F^\prime =K_{\omega^\prime}m^\prime$,
 \end{enumerate}
   then $aF$ and $a^\prime F^\prime$ are in $L^2(\mu)$ and
 \begin{equation*}
    \langle aF,a^\prime F^\prime \rangle
       = \int_{\omega\cap\omega^\prime}
          a(\psi) a^\prime(\psi)^* \langle d\mu(\psi)m,m^\prime\rangle.
 \end{equation*}
   In particular, if
   $F\in L^2(\mu)$ is simple, then $aF$ determines
   an element of $L^2(\mu)$ and $\|aF\|\le \|a\|_\infty \|F\|$.
  Thus,
  there is a unital $*$-representation
  $\tau:C(\mathbb T)\to \mathcal B(L^2(\mu))$  such
  that $\tau(E(z)) F (\psi)$ $= \psi(z) F(\psi)$. \index{$\tau$}
  (Recall the identification of $\Psi$, the collection of test
  functions, with $\mathbb T$.)
\end{lemma}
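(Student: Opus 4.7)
The plan is to assemble Lemma \ref{lem:omnibus} by packaging the results already established in Section \ref{sec:funhilby}, since all the ingredients have been developed there in the general setting of an operator-valued measure $\nu$; here we just specialize to $\nu=\Lambda$ and $\nu=\mu$ and verify that the hypotheses apply.

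First, for item (i), I would apply the construction of $L^2(\nu)$ from Subsection \ref{subsec:constructions} twice, once with $\nu=\Lambda$ and once with $\nu=\mu$. Both $\Lambda$ and $\mu$ are positive operator-valued measures on $\BT$ (for $\Lambda$ this uses Remark \ref{rem:Lambda} together with the fact that $k(T,T^*)$ preserves positivity), so the general theory yields Hilbert spaces $L^2(\Lambda)$ and $L^2(\mu)$. Density of simple functions and boundedness of the constant-function inclusion $\iota:\mathcal M\to L^2(\Lambda)$ (with $\|\iota m\|^2=\langle \Lambda(\mathbb T)m,m\rangle$) come directly from Proposition \ref{prop:simpledense}.

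For item (ii), the pointwise inequality $\Lambda(\omega)=k(T,T^*)(\mu(\omega))\succeq \mu(\omega)$ (which holds because $k(T,T^*)$ is a sum of the form $\sum T_j\cdot T_j^*$ majorizing the $j=0$ term, and $\mu(\omega)\succeq 0$) puts us in the hypothesis of Lemma \ref{lem:Lambda-mu}, producing the contractive inclusion operator $\Phi^*:L^2(\Lambda)\to L^2(\mu)$. For item (iii), I would invoke Lemma \ref{lem:Y-bounded}: the hypothesis needed there is $T_\varphi\Lambda(\omega)T_\varphi^*\preceq \Lambda(\omega)$ for every scalar Schur-class $\varphi$, which is exactly part (i) of the Agler decomposition. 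Then Proposition \ref{prop:L2-special} applied to $f(\psi)=T_\psi^*$ shows $Ym(\psi)=T_\psi^* m$ lies in $\mathcal R^2(\Lambda)$ and $Y$ is bounded. The lurking isometry equation \eqref{eq:lurking} is then verbatim Lemma \ref{lem:lurk}, whose derivation uses part (ii) of the Agler decomposition together with the identity $\frac{1}{k}(T,T^*)(XX^*)=XR^*RX^*$ (which itself uses that $X$ commutes with $T$).

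Finally, the integral formula for $\langle aF,a^\prime F^\prime\rangle$ in the moreover clause, the bound $\|aF\|\le \|a\|_\infty\|F\|$ on simple $F$, and the existence of the unital $*$-representation $\tau:C(\mathbb T)\to \mathcal B(L^2(\mu))$ are exactly the content of Lemma \ref{lem:tau-rep} (equation \eqref{eq:tau-simple} together with the representation statement), specialized to $\nu=\mu$ and with $a=E(z)$ in the last sentence so that $\tau(E(z))F(\psi)=\psi(z)F(\psi)$. Since each constituent step is already proved, there is no substantive obstacle; the role of this lemma is organizational, collecting into one place everything from Section \ref{sec:funhilby} that the factorization and lurking-isometry arguments of Sections \ref{sec:colligation} and \ref{sec:proof} will consume.
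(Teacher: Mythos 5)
Your reading of Lemma \ref{lem:omnibus} as a collection of results from Section \ref{sec:funhilby} is exactly right, and the citations you assemble---Proposition \ref{prop:simpledense} for item (i), Lemma \ref{lem:Lambda-mu} for item (ii), Lemma \ref{lem:Y-bounded} together with Proposition \ref{prop:L2-special} for item (iii), Lemma \ref{lem:lurk} for the lurking isometry, and Lemma \ref{lem:tau-rep} for the moreover clause and the representation $\tau$---are precisely the ingredients the paper's own proof (the paragraph immediately following the lemma's statement) points to. So the overall approach matches the paper's.

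One detail in your parenthetical is off. You justify $\Lambda(\omega)=k(T,T^*)(\mu(\omega))\succeq\mu(\omega)$ by noting that the positive sum $\sum_j T_j\mu(\omega)T_j^*$ dominates its $j=0$ term; that is true, but the $j=0$ term is $T_0\mu(\omega)T_0^*=\tfrac{1}{2}\mu(\omega)$, not $\mu(\omega)$, because $T_0=T_{\zeta_0}$ with $\zeta_0=z^0/\sqrt{1+q^0}=1/\sqrt{2}$. You appear to be conflating $T_j=T_{\zeta_j}$ with the power $T^j$. As written, your argument yields only $\Lambda(\omega)\succeq\tfrac{1}{2}\mu(\omega)$, hence a \emph{bounded} inclusion $\Phi^*:L^2(\Lambda)\to L^2(\mu)$ of norm at most $\sqrt{2}$, which falls short of the stated contractivity. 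Since you are ultimately citing Lemma \ref{lem:Lambda-mu}, whose statement asserts contractivity, the logic of the proposal still goes through as a matter of reference; but the reason you volunteer does not actually prove the inequality, and it is worth noting that the downstream construction (the subspaces $\mathcal K$, $\mathcal K_*$ and the lurking isometry) only consumes boundedness of $\Phi$, since $\Phi\Phi^*$ appears symmetrically on both sides of \eqref{eq:lurking}.
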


  Condition (i) in the definition of Agler decomposition
  implies $Y$ is a bounded (in fact contractive) operator
  into $L^2(\Lambda)$. (Details in Section \ref{sec:funhilby}).
  Further, $\Phi^* Ym = T_\psi^* m$ determines
  an element of $L^2(\mu)$ and in  condition (ii) in the
  definition of an Agler decomposition equation
  \eqref{eq:key-mu} becomes,
 \begin{equation*}
   \langle \Phi^* Ym,\Phi^* Ym\rangle
    =\int \langle T_\psi d\mu(\psi) T_\psi^* m,m \rangle.
 \end{equation*}
  Thus
 \begin{equation*}
     \frac{1}{k}(T,T^*)- X\frac{1}{k}(T,T^*)X^*
        = \iota^* \Phi \Phi^*\iota -  Y^* \Phi \Phi^* Y.
 \end{equation*}
  Rearranging and using the relation
  $\frac{1}{k}(T,T^*)=R^*R$ of Proposition \ref{prop:TinF}
  produces the lurking isometry equality of equation \eqref{eq:lurking}.

\section{The colligation and its Transfer Function}
 \label{sec:colligation}
  Recall there are two parts to the colligation.
  The unitary matrix and the representation.

\def\cW{\mathcal W}

\subsection{The unitary matrix}
  The lurking isometry,  equation \eqref{eq:lurking}, produces, non-uniquely,
  the unitary matrix of item (iii) of Definition \ref{def:colligation}.  
  The construction requires
  an initial enlargement of the space $L^2(\mu)$.
  Let $\ell^2$ denote the usual separable Hilbert space
  with orthonormal basis $\{e_j : j\in\mathbb N\}$
  and define $\cW:L^2(\mu) \to L^2(\mu)\otimes \ell^2$
  by $\cW F= F\otimes e_0$. In particular, $\cW$ is an isometry.

  Let $\mathcal K$ and $\mathcal K_*$ denote the subspaces of
   $[L^2(\mu)\otimes \ell^2] \oplus \mathcal H$
  given by the closures of the spans of
 \begin{equation*}
   \{ \begin{pmatrix} \cW\Phi^* Ym \oplus  Rm \end{pmatrix} :
             m\in \mathcal M\}, \ \ \
   \{ \begin{pmatrix} \cW\Phi^* \iota m \oplus  RX^*m \end{pmatrix} :
              m\in \mathcal M\}
 \end{equation*}
   respectively, where $\iota$ \index{$\iota$} is the inclusion of
   $\mathcal M$ into $L^2(\Lambda)$.
    The lurking isometry of equation (\ref{eq:lurking}) says that the
  mapping from  $\mathcal K$ to $\mathcal K_*$ defined by
 \begin{equation*}
     \begin{pmatrix} \cW\Phi^* Ym \oplus  Rm \end{pmatrix}
            \to
   \begin{pmatrix} \cW\Phi^* \iota m \oplus  RX^*m \end{pmatrix}
 \end{equation*}
  is an isometry. Because $\mathcal K$ and $\mathcal K_*$ have the same codimension
  (i.e., their orthogonal complements have the same dimension),
  this isometry can be extended to a unitary 
 \begin{equation*}
   U=\begin{pmatrix} A^* & C^*\\ B^* & D^*\end{pmatrix}:
       \begin{matrix} L^2(\mu)\otimes \ell^2 \\ \oplus \\ \mathcal H \end{matrix} \to
       \begin{matrix} L^2(\mu)\otimes \ell^2 \\ \oplus \\ \mathcal H \end{matrix}, 
 \end{equation*}
  giving  rise to the usual system of equations,
 \begin{equation}
  \label{eq:system}
   \begin{split}
     A^* \cW\Phi^* Y  + C^* R =&  \cW\Phi^* \iota\\
     B^* \cW\Phi^* Y + D^*R =& RX^*.
   \end{split}
 \end{equation}
  Note that the domain of $D$ and $B$ and the codomain
  of $C$ is $\mathcal M.$


\subsection{The representation}
   Of course we also need the representation
   $\rho:C(\mathbb T)\to \mathcal B(L^2(\mu)\otimes \ell^2)$
   of item (ii) in Definition \ref{def:colligation}. 
   We begin with the unital representation
   $\tau:C(\mathbb T)\to \mathcal B(L^2(\mu))$
   from Lemma \ref{lem:omnibus} (see also Lemma \ref{lem:tau-rep}) and define
   $\rho=\tau\otimes I$, where $I$ is the identity on $\ell^2$.
   \index{$\rho$}

\subsection{The transfer function and its properties}
   Let $E(z):\Psi\to\mathbb C$ denote evaluation at $z;$ \index{$E(z)$}
   i.e., $E(z)(\psi)=\psi(z).$ For $F\in L^2(\mu)$,
   $\tau(E(z)) F (\psi)= \psi(z)F(\psi)$.
   The corresponding transfer function is then given by
 \begin{equation}
   \label{eq:the-transfer}
    W(z) = D+C(I-\rho(E(z))A)^{-1} \rho(E(z))B.
 \end{equation}
    The function $W$ gives rise to the multiplication operator $M_W$
    on $H^2(k)\otimes \mathcal M$. In the following subsection
    we make some observations
    related to $M_W$ and the corresponding
    $\Psi$-unitary colligation needed in the sequel.

   There is a canonical auxiliary multiplication operator associated
   to $z\mapsto \rho(E(z))$ which, as in equation \eqref{eq:mult-op},
   is most conveniently defined in terms of its adjoint.
   \index{$Z$}
   Define $Z^*:H^2(k)\otimes [ L^2(\mu)\otimes \ell^2]
      \to H^2(k)\otimes [L^2(\mu)\otimes \ell^2]$ by
 \begin{equation*}
    Z^* (k_z\otimes F )= k_z \otimes \rho(E(z))^*F.
 \end{equation*}
   Of course it needs to be checked that, after
   extending by linearity,  this prescription
   produces a bounded operator, a fact that follows  readily from
 \begin{equation*}
  \begin{split}
    \langle k_z \otimes \sum F_j\otimes e_j ,& k_w \otimes \sum G_\ell \otimes e_\ell \rangle
      - \sum_j \langle Z^* k_z \otimes F_j, Z^* k_w \otimes G_j  \rangle \\
     = & \sum_j k(w,z) [ \langle F,G\rangle - \langle \rho(E(z))^*F_j,\rho(E(w))^*G_j\rangle \\
     =&\sum_j \int k(w,z) (1-\psi(z)^*\psi(w))G(\psi)^*d\mu (\psi)F(\psi)
  \end{split}
 \end{equation*}
    and the fact that each $k(z,w)(1-\psi(z)\psi(w)^*)$ is a positive kernel
    and $\mu$ is a positive measure. Here we have used
    Proposition \ref{prop:testandkernel} and have actually
   proved that $Z$ has norm at most one.

   Thus $\rho(E(z))$ determines a (multiplication) operator
   on $H^2(k)\otimes [L^2(\mu)\otimes \ell^2]$ denoted 
   by $Z$:
 \begin{equation*}
  \langle Z{\tt{f}}, k_z\otimes F\rangle_{H^2(k)\otimes [L^2(\mu)\otimes \ell^2]}
     = \langle \rho(E(z)){\tt f}(z),F\rangle_{L^2(\mu)\otimes \ell^2}.
 \end{equation*}

 \begin{lemma}
  \label{lem:Z1F}
    Given a simple measurable function 
    $F=\sum K_{\omega_\ell}m_\ell \in L^2(\mu)$ and $f\in H^\infty$, 
  \begin{equation}
   \label{eq:Z1F}
    Z(f\otimes (F\otimes e_p)) =
      \sum f \zeta_j \otimes \langle \psi, \zeta_j  \rangle F \otimes e_p.
  \end{equation}
   Here $K_{\omega_\ell}$ is the characteristic function of the
   Borel set $\omega\subset \mathbb T;$ $e_p$ is the element
   of $\ell^2$ with a $1$ in the $p$-th entry and $0$ elsewhere;
   and the symbol $\psi$ denotes the variable in $\Psi$.
 \end{lemma}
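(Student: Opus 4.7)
The plan is to verify equation~\eqref{eq:Z1F} by pairing both sides with the total family of vectors $k_z \otimes G \otimes e_q$, where $z \in \mathbb{A}$, $G \in L^2(\mu)$ is a measurable simple function, and $q \in \mathbb{N}$. The key structural input is that for each $\psi \in \Psi$ the $H^\infty(\mathbb{A})$-function $\psi$ has the Laurent expansion $\psi(z) = \sum_{j \in \mathbb{Z}} \langle \psi, \zeta_j\rangle \zeta_j(z)$; setting $a_j(\psi) := \langle \psi, \zeta_j\rangle$ produces, for each $j$, a continuous scalar function on $\Psi \cong \mathbb{T}$, and Lemma~\ref{lem:C-rho} supplies the uniform estimate $\|a_j\|_\infty \le C\rho^{|j|}$ with $0 < \rho < 1$.

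The first step is to check that the right-hand series of \eqref{eq:Z1F} converges absolutely in $H^2(k) \otimes L^2(\mu) \otimes \ell^2$: by Lemma~\ref{lem:tau-rep}, $\|a_j F\|_{L^2(\mu)} \le \|a_j\|_\infty \|F\|_{L^2(\mu)} \le C\rho^{|j|}\|F\|_{L^2(\mu)}$, while $\|f\zeta_j\|_{H^2(k)} \le \|f\|_\infty$ (by the usual boundedness of $H^\infty$-multipliers on $H^2(k)$), so the summands have geometrically-decaying norms. Next, pairing both sides against $k_z \otimes G \otimes e_q$ and using the defining identity of $Z$ recalled just before the lemma together with the product structure $\rho = \tau \otimes I$, the left-hand side reduces to $f(z)\langle \tau(E(z))F, G\rangle_{L^2(\mu)}\, \delta_{p,q}$; the right-hand side, via $\langle f\zeta_j, k_z\rangle_{H^2(k)} = f(z)\zeta_j(z)$ and termwise pairing, reduces to $f(z) \sum_j \zeta_j(z)\langle a_j F, G\rangle_{L^2(\mu)}\, \delta_{p,q}$. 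Matching the two then amounts to establishing $\tau(E(z))F = \sum_j \zeta_j(z)(a_j F)$ in $L^2(\mu)$-norm, which follows by writing the partial-sum error as $\tau\!\bigl(E(z) - \sum_{|j|\le N}\zeta_j(z)a_j\bigr)F$ and observing that the sup-norm of the bracketed continuous function on $\mathbb{T}$ is dominated by $\sum_{|j|>N}|\zeta_j(z)|\, C\rho^{|j|}$, a tail which vanishes because $|\zeta_j(z)|$ itself decays geometrically in $|j|$ for any fixed $z \in \mathbb{A}$. Density of $\{k_z \otimes G \otimes e_q\}$ then yields \eqref{eq:Z1F}.

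The main (mild) obstacle is the bookkeeping for the two distinct convergences --- one in $L^2(\mu)$ for the Laurent expansion representing $\tau(E(z))F$, and one in the ambient tensor-product Hilbert space for the right-hand side of \eqref{eq:Z1F} --- together with justifying the interchange of inner product with the infinite sum. Both ultimately rest on the uniform geometric decay provided by Lemma~\ref{lem:C-rho}, after which the remainder of the argument is an elementary inner-product computation.
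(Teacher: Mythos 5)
Your proposal is correct and takes essentially the same route as the paper: both arguments pair the two sides of \eqref{eq:Z1F} against a total family $k_z\otimes(\text{simple})\otimes e_q$, reduce the identity to the scalar Laurent expansion $\psi(z)=\sum_j\langle\psi,\zeta_j\rangle\zeta_j(z)$ pushed through $\tau$, and justify the interchange of sum and pairing via the uniform geometric bound of Lemma~\ref{lem:C-rho} together with the contractivity $\|aF\|_\mu\le\|a\|_\infty\|F\|_\mu$ from Lemma~\ref{lem:tau-rep}. The only cosmetic difference is that you phrase the key step as the $L^2(\mu)$-norm limit $\tau(E(z))F=\sum_j\zeta_j(z)\bigl(\langle\psi,\zeta_j\rangle F\bigr)$, whereas the paper reduces first to the case $F=K_\omega m$ and carries out the equivalent sum--integral interchange inside the explicit formula \eqref{eq:tau-simple}.
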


  In particular, the sum on the right hand side converges.
  Since $\langle \psi,\zeta_j\rangle$ is continuous,
  it follows, from the moreover part of Lemma \ref{lem:omnibus}
   that 
  $\langle \psi,\zeta_j \rangle K_{\omega_\ell} m_\ell$
  is in $L^2(\mu)$.

   In Section \ref{sec:more-test-functions} we show that there
   is a $0<\rho<1$ and a $C$ such that  for all $j$,
   $|\langle \psi,\zeta_j\rangle | < C\rho^{|j|}$
   (see also Lemma \ref{lem:C-rho}). Note also,
 \begin{equation*}
   \psi(z) = \sum_j \langle \psi,\zeta_j\rangle \zeta_j(z).
 \end{equation*}

\begin{proof}
  Choose $C$ and $\rho$ as above. It follows from
  Lemma \ref{lem:tau-rep} that
 \begin{equation*}
    \| \langle \psi,\zeta_j \rangle F\|_{L^2(\mu)} \le
       C \rho^{|j|} \|F\|
  \end{equation*}
  and thus the sum on the right hand side of equation \eqref{eq:Z1F}
  converges. 

   Because simple
   functions are dense in $L^2(\mu)$
   by item (i) of Lemma \ref{lem:omnibus},
   it suffices to prove the result assuming $F= K_{\omega}m \otimes e_p$,
   for a Borel set $\omega$.  Given $z\in\mathbb A$
   and a (very) simple function $F^\prime= K_{\omega^\prime}m^\prime \otimes e_p$
 \begin{equation*}
  \begin{split}
    \langle Z (f\otimes F\otimes e_p),
        k_z\otimes F^\prime \otimes e_p \rangle
     = & \langle (f\otimes F\otimes e_p),
         k_z \otimes \rho(E(z))^* (F^\prime \otimes e_p)
            \rangle_{H^2(k)\otimes [L^2(\mu)\otimes \ell^2]} \\
  = & \langle (f\otimes F),
         k_z \otimes \tau(E(z))^* F^\prime  \rangle_{H^2(k)\otimes L^2(\mu)} \\
     = & \int_{\omega\cap\omega^\prime} f(z)\psi(z) \langle d\mu(\psi)m,m^\prime
             \rangle \\
     = & \sum_j f(z) \zeta_j(z) \int_{\omega\cap\omega^\prime} \langle \psi,\zeta_j
             \rangle
               \langle d\mu(\psi)m,m^\prime \rangle \\
     = & \sum_j f(z) \zeta_j(z)  \langle \langle \psi,\zeta_j\rangle
                 F, F^\prime \rangle \\
     =& \sum_j \langle f\zeta_j \otimes \langle \langle \psi,\zeta_j\rangle
                 F \otimes e_p, k_z\otimes F^\prime \otimes e_p  \rangle.
   \end{split}
 \end{equation*}
\end{proof}

\def\bW{\mathbb W}

  Returning to the transfer function $W$
  of equation \eqref{eq:the-transfer}, let $\bW=W(z)-D$.
  Before concluding this subsection, we present two key relations
  amongst $V,\bW,R,\Phi,\iota$ and $Z$.
  Define $J:\mathcal M\to H^2(k)\otimes\mathcal M$ by
\[
  Jm=\sum \zeta_j \otimes T_j^* m.
\]
  The  spectral condition $\sigma(T)\subset \mathbb A$ implies
  this sum converges and $J$ is a bounded operator. Note 
  that $(I\otimes R)J = V$.

\begin{lemma}
 \label{lem:key1}
  For $f\in H^\infty(\mathbb A)$ and $F\in L^2(\mu)\otimes \ell^2$,
\begin{equation}
 \label{eq:half-intertwine}
    J^* (I\otimes\iota^* \Phi \cW^*) Z  f\otimes F
     = T_f Y^*\Phi \cW^* F,
\end{equation}
 and
\begin{equation*}
        J^*(I\otimes\iota^* \Phi \cW^*)   f\otimes F
           =  T_f \iota^* \Phi \cW^* F.
\end{equation*}
\end{lemma}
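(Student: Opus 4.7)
My plan is to prove both identities through the simple observation that $J^*(g\otimes m') = T_g m'$ for $g\in H^\infty(\mathbb A)$ and $m'\in\mathcal M$, which follows directly from $Jm=\sum\zeta_j\otimes T_j^*m$ and the hereditary-calculus expansion $T_g=\sum\langle g,\zeta_j\rangle T_j$. Granting this formula, the second equation is immediate: $(I\otimes\iota^*\Phi\cW^*)(f\otimes F) = f\otimes \iota^*\Phi\cW^* F$, and applying $J^*$ returns $T_f\,\iota^*\Phi\cW^* F$. The work is concentrated in the first equation.

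For the first equation, I would first reduce to simple test vectors. Since $\rho=\tau\otimes I_{\ell^2}$, $Z$ preserves the $\ell^2$-grading; since $\cW^*(G\otimes e_p)=\delta_{p,0}G$, only the $e_0$-component of $F$ contributes on both sides. Using that $Z$, $\iota^*$, $\Phi$, $\cW^*$, $J^*$, $T_f$, and $Y^*$ are all bounded (the boundedness of $J$ following from the spectral condition $\sigma(T)\subset\mathbb A$, which forces geometric decay of $\|T_j\|$), and that simple functions are dense in $L^2(\mu)$, it suffices to take $F = F_0\otimes e_0$ with $F_0$ a measurable simple function. For such $F$, Lemma \ref{lem:Z1F} furnishes the norm-convergent expansion $Z(f\otimes F_0\otimes e_0)=\sum_j f\zeta_j\otimes\langle\psi,\zeta_j\rangle F_0\otimes e_0$; applying $I\otimes\iota^*\Phi\cW^*$ and then $J^*$, together with $T_{f\zeta_j}=T_f T_j$, transforms the left-hand side into $T_f\sum_j T_j\,\iota^*\Phi[\langle\psi,\zeta_j\rangle F_0]$.

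The main obstacle is then identifying this sum with $Y^*\Phi F_0$, i.e.\ establishing
\[
\sum_j T_j\,\iota^*\Phi\bigl[\langle\psi,\zeta_j\rangle F_0\bigr] = Y^*\Phi F_0.
\]
I would verify this weakly against an arbitrary $m\in\mathcal M$. Noting that $\Phi^*\iota$ is the inclusion of constants into $L^2(\mu)$, the explicit simple-function inner product formula in Lemma \ref{lem:tau-rep} should let me rewrite the left-hand pairing as $\sum_j\langle F_0,\langle\zeta_j,\psi\rangle T_j^*m\rangle_{L^2(\mu)}$, at which point Lemma \ref{lem:intfour} collapses the series to $\langle F_0,\Phi^*Ym\rangle_{L^2(\mu)}=\langle Y^*\Phi F_0,m\rangle$, closing the argument. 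The delicate interchange of summation and integration on which this depends is exactly what Lemma \ref{lem:intfour}, together with the uniform decay $|\langle\psi,\zeta_j\rangle|\le C\rho^{|j|}$ of Lemma \ref{lem:C-rho}, has been tailored to provide.
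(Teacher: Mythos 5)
Your proposal is correct and follows essentially the same route as the paper: reduce to simple vectors, invoke Lemma~\ref{lem:Z1F} for the expansion of $Z$, and collapse the resulting series via Lemma~\ref{lem:intfour}. The only (welcome) cosmetic difference is that you isolate the identity $J^*(g\otimes m')=T_g m'$ and apply multiplicativity $T_{f\zeta_j}=T_fT_j$ directly for general $f\in H^\infty(\mathbb A)$, whereas the paper first restricts to monomials $f=\zeta^p$ and tracks the normalizing constants via $\langle\zeta^p\zeta_\ell,\zeta_{p+\ell}\rangle T_{p+\ell}=T_\ell T^p$; the content is the same.
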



\begin{proof}
   First, suppose $f=\zeta^p$ for some integer $p$.
  Straightforward computation and the fact that $M$ lifts $T$ gives,
\[
  \langle \zeta^p \zeta_\ell,\zeta_{p+\ell}\rangle T_{p+\ell}
    = T_\ell T^p. 
\]
   Given $m\in\mathcal M$, 
   $\omega \in \BT$ and $h\in L^2(\mu)\otimes \ell^2$,
   let $F= K_\omega \otimes h$, 
   and compute,
\begin{equation*}
  \begin{split}
    \langle J^*(I\otimes \iota^* \Phi  \cW^*) Z
        (z^p\otimes F)&,m\rangle_{\mathcal M} \\
     = & \langle (I\otimes \iota^* \Phi  \cW^*)Z
         (z^p\otimes F),    \sum \zeta_j\otimes T_j^* m\rangle_{H^2(k)\otimes \mathcal M}\\
     = & \sum_j \langle I\otimes \cW^* Z (z^p \otimes F), \zeta_j
          \otimes \Phi^* \iota T_j^* m\rangle_{H^2(k)\otimes L^2(\mu)} \\
     =& \sum_\ell \langle [\sum_\ell  z^p \zeta_\ell \otimes \langle \psi,\zeta_\ell
          \rangle \cW^* F],
          \zeta_j \otimes \Phi^* \iota T_j^* m\rangle_{H^2(k)\otimes L^2(\mu)} \\
     =& \sum_\ell \langle \zeta^p \zeta_\ell,\zeta_{p+\ell}\rangle
           \langle \psi,\zeta_{\ell}\rangle \langle \cW^* F, \Phi^*
             \iota T_{p+\ell}^* m\rangle_{L^2(\mu)} \\
     =& \sum_\ell \langle \Phi \cW^* F, \langle \zeta_\ell,\psi\rangle
             T_\ell^* (T^*)^p m\rangle_{L^2(\Lambda)} \\
     =& \langle \Phi \cW^* F,Y(T^*)^p m\rangle_{L^2(\Lambda)} \\
     =& \langle T^p Y^*  \Phi \cW^* F,m\rangle_{\mathcal M}.
 \end{split}
\end{equation*}
    Here we have used the form of $V$ from Proposition \ref{prop:AEM}
    in the second equality; the description of $Z$
    provided by Lemma \ref{lem:Z1F} in the
    fourth; and Lemma \ref{lem:intfour}, 
    equation \eqref{eq:intfour} in the seventh.

  Now use linearity and the fact that the linear span of elements
  like $F$ is dense in $L^2(\mu)\otimes \ell^2$ to finish
  the proof of the first part of Lemma \ref{lem:key1}.

  An argument very much like the one that proved the first identity
  proves the second. 
\end{proof}

We now  use Lemma \ref{lem:key1}  to establish the following Lemma. 

\begin{lemma}
 \label{lem:pre-parlay}
    With notations  as above (and $A$, $B$ and $C$ appearing in
  the representation of the transfer function $W$),
 \begin{equation*}
   J^*(I\otimes \iota^* \Phi \cW^*)[I-Z(I\otimes A)]
     = V^*(I\otimes C).
 \end{equation*}
\end{lemma}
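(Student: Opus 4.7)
The plan is to verify the identity on a dense set of elementary tensors $f \otimes F$, where $f \in H^\infty(\mathbb A)$ and $F \in L^2(\mu) \otimes \ell^2$, and then conclude by boundedness. Since each of the operators $J$, $\iota^*\Phi\cW^*$, $Z$, $A$, $V$, and $C$ that appears is bounded, the asserted equality between them will follow once it is established on such tensors; for the density, I would note that $H^\infty(\mathbb A)$ is dense in $H^2(k)$ (Laurent polynomials suffice) and simple measurable functions are dense in $L^2(\mu) \otimes \ell^2$.

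Fixing such an $f \otimes F$, the key inputs are the two identities of Lemma \ref{lem:key1}. The second identity gives
\[
   J^*(I\otimes \iota^* \Phi \cW^*)(f\otimes F) = T_f\, \iota^* \Phi \cW^* F,
\]
and the first, applied to $(I\otimes A)(f\otimes F) = f \otimes AF$, gives
\[
   J^*(I\otimes \iota^* \Phi \cW^*)\, Z\, (f\otimes AF) = T_f\, Y^* \Phi \cW^* A F.
\]
Subtracting, the left hand side of the claimed identity, evaluated at $f\otimes F$, collapses to
\[
  T_f\bigl(\iota^* \Phi \cW^* - Y^* \Phi \cW^* A\bigr) F.
\]

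Next I invoke the adjoint of the first equation of the lurking isometry system \eqref{eq:system}, namely $Y^*\Phi \cW^* A + R^* C = \iota^* \Phi \cW^*$, which rearranges to $\iota^*\Phi\cW^* - Y^*\Phi\cW^* A = R^* C$. The left hand side thus equals $T_f R^* C F$. On the other hand, applying equation \eqref{eq:Vstar} from Proposition \ref{prop:AEM} to $(I\otimes C)(f\otimes F) = f \otimes CF$ yields
\[
   V^*(I\otimes C)(f\otimes F) = V^*(f\otimes CF) = T_f R^* C F,
\]
which matches. Since the two bounded operators agree on a dense set, the identity follows.

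The proof is essentially a bookkeeping exercise: Lemma \ref{lem:key1} has already done the real work of translating the multiplication operator $Z$ (defined via the representation $\rho \circ E(z)$) into an action involving $Y$ and the Laurent coefficients of $\psi$, while the lurking isometry equations \eqref{eq:system} supply the algebraic link between $\iota$, $Y$, $A$, $C$, and $R$. The only point requiring care is ensuring that the density argument is legitimate, which it is, as all operators involved are bounded by construction.
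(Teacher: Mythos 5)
Your proof is correct and is essentially the same argument as the paper's: verify the identity on elementary tensors $f\otimes F$, apply both parts of Lemma~\ref{lem:key1}, then the adjoint of equation~\eqref{eq:system}~(i), and finally equation~\eqref{eq:Vstar} from Proposition~\ref{prop:AEM}, concluding by density. The only cosmetic difference is that you explicitly take adjoints of the first system equation, whereas the paper leaves that step implicit.
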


\begin{proof}
  For $f\in H^\infty(\mathbb A)$ and $F\in L^2(\mu)\otimes e_p$,
 \begin{equation*}
  \begin{split}
   J^*(I\otimes \iota^*\Phi \cW^*)&[I-Z(I\otimes A)](f\otimes F) \\
    =& J^*(I\otimes \iota^* \Phi\cW^*)
         [f\otimes F-Z(f\otimes AF)] \\
    =& T_f [\iota^*\Phi\cW^*  - Y^*\Phi\cW^* A]F \\
    =& T_f [R^* C]F \\
    =& V^*[f\otimes CF].
  \end{split}
 \end{equation*}
    Here both parts of Lemma \ref{lem:key1} were used in the second equality,
    equation \eqref{eq:system} (i) was used in the third,
    and Proposition \ref{prop:AEM} in the last.

   Since the linear span of elements of the form $f\otimes F$
   is dense in $H^2(k)\otimes [L^2(\mu)\otimes \ell^2]$, the result follows.
\end{proof}

 The following Lemma does the heavy lifting in the proof
 of (ad) implies (sc) in Theorem \ref{thm:main}.
  Recall $\bW= W-D$. 

\begin{lemma}
 \label{lem:parlay}
   For $m\in \mathcal M$,
 \begin{equation*}
   J^*(I\otimes \iota^* \Phi\cW^*) Z (I\otimes B)(1\otimes m)
     = V^* M_{\bW}(1\otimes m).
 \end{equation*}
\end{lemma}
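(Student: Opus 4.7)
I will expand $\bW(z)=C(I-\rho(E(z))A)^{-1}\rho(E(z))B$ via its Neumann series and apply Lemma~\ref{lem:pre-parlay} to the partial sums. For $z\in\mathbb{A}$, compactness of $\Psi$ together with $|\psi(z)|<1$ for every $\psi\in\Psi$ gives $\|\rho(E(z))\|\le \sup_{\psi}|\psi(z)|<1$, so $(I-\rho(E(z))A)^{-1}=\sum_n[\rho(E(z))A]^n$ converges in operator norm locally uniformly on $\mathbb{A}$. Set
\[
\mathbf v_n := [Z(I\otimes A)]^n Z(I\otimes B)(1\otimes m) \in H^2(k)\otimes[L^2(\mu)\otimes\ell^2]
\]
and $\mathbf g_N := \sum_{n=0}^N \mathbf v_n$. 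The pointwise formula of Lemma~\ref{lem:Z1F} gives $\mathbf v_n(z)=[\rho(E(z))A]^n\rho(E(z))Bm$, so $(I\otimes C)\mathbf g_N = M_{\bW_N}(1\otimes m)$ where $\bW_N(z):=C\sum_{n=0}^N[\rho(E(z))A]^n\rho(E(z))B$ is the $N$-th partial sum of the Neumann series for $\bW$; $\bW_N$ is bounded on $\mathbb{A}$ (with an $N$-dependent constant), so $M_{\bW_N}$ is a bounded multiplication operator.

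\textbf{Telescoping.} Lemma~\ref{lem:pre-parlay} together with the identity $[I-Z(I\otimes A)]\mathbf g_N = \mathbf v_0-\mathbf v_{N+1}$ yields
\[
V^*M_{\bW_N}(1\otimes m) = J^*(I\otimes\iota^*\Phi\cW^*)Z(I\otimes B)(1\otimes m) - J^*(I\otimes\iota^*\Phi\cW^*)\mathbf v_{N+1}.
\]
The tail $\mathbf v_{N+1}$ satisfies $\|\mathbf v_{N+1}\|\le \|B\|\,\|1\otimes m\|$ uniformly in $N$ (since $\|Z\|,\|A\|\le 1$) and $\|\mathbf v_{N+1}(z)\|\le \|B\|\,\|m\|\,\|\rho(E(z))\|^{N+2}\to 0$ pointwise on $\mathbb{A}$, so $\mathbf v_{N+1}\to 0$ weakly in $H^2(k)\otimes[L^2(\mu)\otimes\ell^2]$ by testing against the total set $\{k_z\otimes h\}$. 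Boundedness of $J^*(I\otimes\iota^*\Phi\cW^*)$ then gives weak convergence of the second term to $0$, so $\{V^*M_{\bW_N}(1\otimes m)\}$ is a uniformly bounded sequence in $\mathcal M$ that converges weakly to $J^*(I\otimes\iota^*\Phi\cW^*)Z(I\otimes B)(1\otimes m)$.

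\textbf{Identifying the limit.} It remains to verify that this weak limit equals $V^*M_{\bW}(1\otimes m)$. For $h\in\mathcal M$, orthonormality of $\{\zeta_\ell\}$ gives $\langle V^*M_{\bW_N}(1\otimes m),h\rangle = \sum_\ell \langle b^{(N)}_\ell, RT_\ell^* h\rangle$, where $b^{(N)}_\ell$ is the coefficient of $\zeta_\ell$ in the Laurent expansion of $\bW_N(\cdot)m$. For each $r\in(q,1)$, $\sup_{|z|=r}\|\rho(E(z))A\|<1$, so $\sup_{|z|=r}\|\bW_N(z)\|$ is bounded independently of $N$; the Cauchy integral formula then provides $N$-independent bounds on $\|b^{(N)}_\ell\|$ (for $\ell\ge 0$ using the circle $|z|=r$ with $r$ close to $1$, and for $\ell<0$ using a circle near $|z|=q$). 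Combined with the exponential decay of $\|RT_\ell^* h\|$ in $|\ell|$ furnished by $\sigma(T)\subset\mathbb{A}$, dominated convergence gives $\lim_N \sum_\ell \langle b^{(N)}_\ell, RT_\ell^* h\rangle = \sum_\ell\langle b^{(\infty)}_\ell, RT_\ell^*h\rangle = \langle V^*M_{\bW}(1\otimes m), h\rangle$, and uniqueness of weak limits finishes the proof.

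The main obstacle is this last identification: $\{\bW_N\}$ fails to be uniformly bounded on all of $\mathbb{A}$ (the norms blow up near $\partial\mathbb{A}$), precluding a direct appeal to Lemma~\ref{lem:bounded-pointwise-weak}; the Cauchy integral / Laurent coefficient argument provides the workaround by exploiting the uniform-in-$N$ bounds on each interior circle.
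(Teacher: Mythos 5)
Your proof is correct, but it takes a genuinely different route from the paper's. The paper regularizes with $\bW_n(z)=C(I-t_n\rho(E(z))A)^{-1}\rho(E(z))B$ for a sequence $t_n\uparrow 1$. The key point there, left implicit, is that these are transfer functions of the original unitary colligation with $\rho(E(z))$ replaced by the still-contractive $t_n\rho(E(z))$; hence $\|W_{t_n}(z)\|\le 1$ uniformly in $n$ and $z$, so $M_{\bW_n}\to M_{\bW}$ WOT follows immediately from Lemma~\ref{lem:bounded-pointwise-weak}. The paper then combines Lemma~\ref{lem:pre-parlay} with the resolvent identity $[I-Z(I\otimes A)](I-t_nZ(I\otimes A))^{-1}=I+(t_n-1)Z(I\otimes A)(I-t_nZ(I\otimes A))^{-1}$ to isolate an error term that dies because the contraction $\cZ_n=(1-t_n)[I-t_nZ(I\otimes A)]^{-1}$ tends to $0$ WOT (itself verified by a short defect-operator computation). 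You instead truncate the Neumann series at level $N$. Your telescoping identity $[I-Z(I\otimes A)]\mathbf g_N=\mathbf v_0-\mathbf v_{N+1}$ is algebraically simpler than the paper's resolvent manipulation, and the weak vanishing of $\mathbf v_{N+1}$ (uniform bound plus pointwise decay against the total set $\{k_z\otimes h\}$) is clean. The cost is that, as you correctly flag, the partial sums $\bW_N$ are not uniformly bounded on $\mathbb A$, so Lemma~\ref{lem:bounded-pointwise-weak} does not apply and you must identify the weak limit by hand. Your Cauchy-integral/Laurent-coefficient estimate is sound: fixing $q<\alpha_1<\alpha_0<\min|\sigma(T)|$ and $\max|\sigma(T)|<\beta_0<\beta_1<1$ yields $\|b^{(N)}_\ell\|\,\|RT_\ell^*h\|\lesssim(\beta_0/\beta_1)^\ell$ for $\ell\ge0$ and $\lesssim(\alpha_1/\alpha_0)^{|\ell|}$ for $\ell<0$, uniformly in $N$, so dominated convergence applies. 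In short, the paper exploits the unitary-colligation structure to get the limit cheaply and pays with a resolvent computation; you get the algebra for free by telescoping and pay with a coefficient estimate that exploits $\sigma(T)\subset\mathbb A$. Both are valid.
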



\def\cZ{\mathcal Z}

\begin{proof}
   Choose a sequence  $0<t_n<1$ converging to $1$ and let
 \begin{equation*}
   \mathcal Z_n=(1-t_n)[I-t_n Z(I\otimes A)]^{-1}.
 \end{equation*}
   We claim that $\cZ_n$ converges to $0$ in the WOT.
    The first step in proving this claim is to show that  $\cZ_n$ is
    contractive which follows from the following computation in which we
    have written $S$ in place of $Z(I\otimes A)$:
\begin{equation*}
  \begin{split}
   I-\cZ_n\cZ_n^*
    =& (I-t_nS)^{-1}[ (I-t_nS)(I-t_nS)^*
          -(1-t_n)^2] (I-t_nS)^{-*} \\
    =& t_n (I-t_nS)^{-1}[ -S - S^*
          -t_n(1-SS^*)] (I-t_nS)^{-*} \\
    =& t_n (I-t_nS)^{-1}[ (I-S)(I -S^*)
          +(1-t_n) (1-SS^*)    ](I-t_nS)^{-*}. \\
  \end{split}
 \end{equation*}
   Noting that $S$ is a contraction - since both
  $Z$ and $A$ are contractions - it follows
  that $\cZ_n$ is a contraction.

   Next observe that, for given
  ${\tt f}\in H^2(k)\otimes L^2(\mu)\otimes\ell^2$,
   $z\in \mathbb A$ and $F\in L^2(\mu)\otimes \ell^2$,
 \begin{equation*}
  \begin{split}
   \langle  \cZ_n {\tt f}, k_z\otimes F\rangle
    =& (1-t_n) \sum_j\langle {\tt f}, (t_n (I\otimes A)^*Z^*)^j k_z\otimes F\rangle \\
    = & (1-t_n) \sum_j \langle {\tt f}, k_z\otimes (t_nA^* \rho(E(z))^*)^j F\rangle \\
    = & (1-t_n) \langle {\tt f}, k_z \otimes (I- t_nA^* \rho(E(z))^*)^{-1}F\rangle 
  \end{split}
 \end{equation*}
   which evidently tends to $0$ as $t_n$ tends to $1$,
   since $\|\rho(E(z)) A\|<1$.  The statement
  about WOT convergence now follows.

  Let $\bW_n = C(I-t_n\rho(E(z))A)^{-1} \rho(E(z))B$.
  Because $\bW_n$ converges pointwise boundedly 
  to $\bW$,  $M_{\bW_n}$ converges WOT boundedly  to $M_{\bW}$.

  Next, for $m,h\in\mathcal M$,
 \begin{equation*}
  \begin{split}
    \langle (I\otimes C)&(I- t_n Z(I\otimes A))^{-1} Z(I\otimes B)(1\otimes m),
         k_z\otimes h\rangle \\
    =& \langle (I-t_nZ(I\otimes A))^{-1} Z 1\otimes Bm,
         k_z\otimes C^*h \rangle \\
    =& \langle (I-t_n \rho(E(z))A)^{-1} \rho(E(z)) Bm, C^*h\rangle \\
    =& \langle \bW_n(z) m, h\rangle\\
    =& \langle (M_{\bW_n}1\otimes m)(z), h \rangle\\
    =& \langle M_{\bW_n} 1\otimes m, k_z\otimes h\rangle.
  \end{split}
 \end{equation*}
  Hence
 \begin{equation*}
  (I\otimes C)(I- t_n Z(I\otimes A))^{-1} Z (I\otimes B)(1\otimes m)
   = M_{\bW_n} m.
 \end{equation*}

  We are now in a position to complete the proof.
  Using Lemma \ref{lem:pre-parlay},
 \begin{equation*}
  \begin{split}
    V^* &M_{\bW_n}(1\otimes m)  \\
     =& V^* (I\otimes C)(I- t_n Z(I\otimes A))^{-1} Z(I\otimes B)(1\otimes m)\\
     = & J^*(I\otimes \iota^*\Phi \mathcal{W}^*)[I-Z(I\otimes A)] \times \\
        &   (I- t_n Z(I\otimes A))^{-1}Z (I\otimes B)(1\otimes m)\\
     =& J^*(I\otimes \iota^*\Phi \mathcal{W}^*)Z (1\otimes Bm) \\
        & + (t_n-1) V^*(I- t_n Z(I\otimes A))^{-1} Z(I\otimes B)(1\otimes m).
  \end{split}
 \end{equation*}
   As $n$ tends to infinity, the left hand side tends to $V^*M_{W}$ (WOT)
   and the second term on the right hand side tends to $0$ (WOT)
   completing the proof.
\end{proof}

\section{Proof of (ad) Implies (sc)}
 \label{sec:proof}
   Using the ingredients assembled in the previous section,
   the proof that (ad) implies (sc) follows readily.
   For $f\in H^\infty(\mathbb A)$ and $m\in\mathcal M$,
 \begin{equation*}
  \begin{split}
    V^* M_W (f\otimes m)
      =& V^* M_f M_W(1\otimes m) \ \ \mbox{ since } M_f, M_W \mbox{ commute} \\
      =& T_f V^* M_W (1\otimes m) \ \ \mbox{ since } V^* \mbox{ intertwines } M_f  \mbox{ and } T_f  \\
      =& T_f V^* [(1\otimes Dm)+M_{\bW}(1\otimes m)]  \\
      =& T_f V^*(1\otimes Dm) + T_f J^*(I\otimes \iota^* \Phi \mathcal W^*)Z 1\otimes Bm
                     \ \ \mbox{ from Lemma \ref{lem:parlay}}\\
    = & T_f R^*Dm + T_f J^*(I\otimes \iota^* \Phi \mathcal W^*)Z 1\otimes Bm
                     \ \ \mbox{from equation \eqref{eq:Vstar}} \\
      =& T_f R^*Dm + T_f \iota^* Y^*\Phi \mathcal W^* Bm
            \ \ \ \mbox{ using equation \eqref{eq:half-intertwine}} \\
      =& T_f[ R^* D+  Y^*\Phi \cW^* B]m\\
      =& T_f XR^* m \ \ \mbox{ using the second equation in \eqref{eq:system}}\\
      =& XT_f R^*m \\
      =& X V^* (f\otimes m) \ \ \mbox{ from Proposition \ref{prop:AEM}, equation \eqref{eq:Vstar}}.
  \end{split}
 \end{equation*}


\section{The Converse}
 \label{sec:converse}
  This section is devoted to the proof of the implication
  (sc) implies (ad) of Theorem \ref{thm:main}.
  Accordingly assume hypotheses (i), (ii), and (iii)
  and also the representation (sc) for $X$
  in Theorem \ref{thm:main} throughout
  this section.  Thus there is an $W$ with a $\Psi$-unitary
  colligation transfer function representation
\begin{equation*}
  W(z)= D+C(I-\rho(E(z))A)^{-1} \rho(E(z))B
\end{equation*}
 such that $VX^*=M_W^* V$.

 For definiteness, write
\begin{equation*}
   U =\begin{pmatrix} A & B \\ C & D \end{pmatrix}:
    \begin{matrix} \mathcal E \\ \oplus \\ \mathcal H \end{matrix}  \to
     \begin{matrix} \mathcal E \\ \oplus \\ \mathcal H \end{matrix}.
\end{equation*}

 For technical reasons, let, for $0\le r < 1$,
\begin{equation*}
  W_r(z) = D+C(I-r\rho(E(z))A)^{-1} r\rho(E(z))B.
\end{equation*}

 Like before, let
\begin{equation*}
  H_r(z)= C(I-r\rho(E(z))A)^{-1}.
\end{equation*}
 The usual computation reveals,
\begin{equation}
 \label{eq:H}
  I-W_r(z)W_r(w)^* = H_r(z)(I-r^2\rho(E(z))\rho(E(w))^*)H_r(w)^*.
\end{equation}

\def\bE{\mathbb E}

There is a spectral measure $\bE$ associated with the representation
  $\rho$. Thus $\bE:\BT\to\mathcal B(\mathcal E)$ and, in particular,
 \begin{equation*}
   \rho(E(z))\rho(E(w))^* = \int_\Psi E(z)E(w)^* d\bE(\psi)
      = \int_\Psi \psi(z)\psi(w)^* d\bE(\psi),
 \end{equation*}
  where $E(z)(\psi)=\psi(z)$ has been used.

\begin{lemma}
 \label{lem:HHstar}
  There exists a constant $\kappa>0$ so that
 \begin{equation*}
   H(z)H(w)^* \preceq \kappa k(z,w).
 \end{equation*}
 (Here the inequality is in the sense of kernels).
\end{lemma}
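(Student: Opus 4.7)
The plan is to prove the asserted kernel bound by combining the Schur-Agler class structure of the transfer function $W$, the identity \eqref{eq:H}, and the positivity supplied by Proposition~\ref{prop:testandkernel}, via a scaling argument with $W_r$ and a limit as $r\to 1^-$.

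First, I would integrate the rank-two scalar positive kernels $k(z,w)(1-\psi(z)\psi(w)^*)$ of Proposition~\ref{prop:testandkernel} against the spectral measure $\bE$ of $\rho$ to obtain
\[
 k(z,w)\bigl(I_{\mathcal E}-\rho(E(z))\rho(E(w))^*\bigr)\succeq 0
\]
as an operator-valued kernel on $\mathbb A\times\mathbb A$. Equivalently, $z\mapsto \rho(E(z))$ is a contractive multiplier on $H^2(k)\otimes\mathcal E$, and this is the fundamental positivity that drives the estimate.

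Second, using \eqref{eq:H} together with the decomposition
\[
I-r^2\rho(E(z))\rho(E(w))^* = (1-r^2)I + r^2\bigl(I-\rho(E(z))\rho(E(w))^*\bigr),
\]
I would express $k(z,w)(I-W_r(z)W_r(w)^*)$ as a sum of two positive kernels, which shows that $W_r$ itself lies in the Schur-Agler class for each $0\le r<1$. Since $k(z,w)W_r(z)W_r(w)^*$ is a positive kernel as a Schur product, this gives $k(z,w)(I-W_r(z)W_r(w)^*)\preceq k(z,w)I_{\mathcal H}$ and hence the preliminary estimate
\[
(1-r^2)\,k(z,w)\,H_r(z)H_r(w)^*\preceq k(z,w)\,I_{\mathcal H},
\]
a bound with a $(1-r^2)^{-1}$ factor that unfortunately degenerates as $r\to 1$. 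To promote this to a bound uniform in $r$ and pass to the limit, I would analyze the linear map $\Theta_r:\mathcal E\to H^2(k)\otimes\mathcal H$ defined by $(\Theta_r e)(z)=H_r(z)e$. Once a uniform estimate $\|\Theta_r\|^2\le \kappa$ is in hand, weak-operator compactness produces a subsequential limit $\Theta$, and the pointwise convergence $H_r(z)\to H(z)$ on $\mathbb A$ forces $(\Theta e)(z)=H(z)e$, which is equivalent to the desired kernel bound $H(z)H(w)^*\preceq \kappa\,k(z,w)\,I_{\mathcal H}$.

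The principal obstacle is producing the uniform bound on $\|\Theta_r\|$ without the degenerating factor $(1-r^2)^{-1}$. The strategy is to apply the Schur-Agler contractivity directly to $W_r-D = rH_r\rho(E(\cdot))B$, which yields a uniform multiplier bound on $z\mapsto H_r(z)\rho(E(z))B$; one then reconstructs $H_r$ itself using the defect relation $BB^*=I-AA^*$ coming from the unitary colligation together with the strict pointwise contractivity $\|\rho(E(z))\|<1$ for $z\in\mathbb A$. Proposition~\ref{prop:ourk}, asserting that $1/k(z,w)=C^\prime k(z,-w)$, ensures that $1/k$ is well behaved on $\mathbb A\times\mathbb A$ and facilitates the transfer of the resulting estimates into the stated kernel inequality.
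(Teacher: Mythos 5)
Your proposal correctly identifies the opening moves the paper also uses: integrating the positive kernels $k(z,w)(1-\psi(z)\psi(w)^*)$ against the spectral measure $\bE$ to get $k(z,w)(I-\rho(E(z))\rho(E(w))^*)\succeq 0$, invoking equation \eqref{eq:H}, recognizing that $W_r$ is Schur--Agler, and observing that the naive split $I-r^2\rho(E(z))\rho(E(w))^*=(1-r^2)I+r^2(I-\rho(E(z))\rho(E(w))^*)$ produces only a $(1-r^2)^{-1}$-degenerate bound. The limit-passage at the end (weak compactness plus pointwise convergence $H_r\to H$) is also fine, once a uniform estimate is in hand.

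The gap is in the mechanism you propose for obtaining that uniform estimate. You suggest bounding the multiplier $z\mapsto H_r(z)\rho(E(z))B = r^{-1}(W_r-D)$ and then ``reconstructing'' $H_r$ from it via $BB^*=I-AA^*$ and the strict pointwise inequality $\|\rho(E(z))\|<1$. This does not work: there is no lower bound on $BB^*$ coming from the colligation (indeed $B$ could be small, or even zero, making $H_r\rho(E(\cdot))B$ trivially bounded while $H_r$ itself is not), and $\|\rho(E(z))\|<1$ is a pointwise statement that degenerates as $z\to\partial\mathbb A$, so it cannot yield the kernel inequality $H(z)H(w)^*\preceq\kappa k(z,w)$ uniformly over $\mathbb A\times\mathbb A$. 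More fundamentally, the argument as stated never exploits any special feature of the test functions or the kernel, and the claimed inequality is simply false for a generic Schur--Agler transfer function over a generic kernel.

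What your proposal is missing is precisely the structural fact that drives the paper's proof: every test function $\psi\in\Psi$ vanishes at the common point $b=\sqrt q$. From this, for each fixed $\psi$ and $r$, the positive kernel $K_{\psi,r}(z,w)=k(z,w)(1-r^2\psi(z)\psi(w)^*)$ satisfies $K_{\psi,r}(z,b)=k(z,b)$ (independent of $\psi$ and $r$), and so by the standard Schur-complement inequality for positive kernels, $K_{\psi,r}(z,w)\succeq k(z,b)k(b,w)/k(b,b)=G(z)G(w)^*$ with $G(z)=k(z,b)/\sqrt{k(b,b)}$. Integrating against $\bE$ gives the uniform (in $r$) lower bound $\int K_{\psi,r}(z,w)\,d\bE(\psi)\succeq G(z)G(w)^*I$, and feeding this into \eqref{eq:H2} produces $k(z,w)\succeq H_r(z)G(z)G(w)^*H_r(w)^*$. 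The final step uses that $k(\cdot,b)$ does not vanish on a neighborhood of $\overline{\mathbb A}$ (a consequence of Proposition~\ref{prop:ourk}), so $g=1/G$ is a bounded multiplier of $H^2(k)$; Schur-multiplying the last display by the positive kernel $g(z)g(w)^*$ and using $\eta^2 k(z,w)\succeq k(z,w)g(z)g(w)^*$ yields $\eta^2 k(z,w)\succeq H_r(z)H_r(w)^*$ with $\eta$ independent of $r$. You gesture toward Proposition~\ref{prop:ourk} at the end of your sketch, but without the common-zero observation it cannot do the work you need it to do.
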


\begin{proof}
   Multiplying equation (\ref{eq:H}) by $k(z,w)$ gives,
 \begin{equation}
  \label{eq:H2}
   (I-W_r(z)W_r(w)^*)k(z,w) =
     H_r(z)[ \, \int k(z,w)(1-r^2\psi(z)\psi(w)^*) dE(\psi) \, ] H_r(w)^*
 \end{equation}

  On the other hand, with $b=\sqrt{q}$, since $\psi(b)=0$ we have
 \begin{equation*}
    k(z,b)(1-r^2\psi(z)\psi(b)^*) = k(z,b).
 \end{equation*}
   Thus,
 \begin{equation}
  \label{ineq:rank2}
   k(z,w)(1-r^2\psi(z)\psi(w)^*) \succeq\frac{k(z,b)k(b,w)}{k(b,b)}.
 \end{equation}
  Letting  $G(z)=\frac{k(z,b)}{\sqrt{k(b,b)}}$, combining equations
   (\ref{ineq:rank2}) and (\ref{eq:H2}),
   using
 \begin{equation*}
   k(z,w)\succeq k(z,w)(I-W_r(z)W_r(w)^*)
 \end{equation*}
   and $\bE(\mathbb T)=I$,   gives,
 \begin{equation}
  \label{eq:converse-main}
    k(z,w) \succeq  H_r(z)G(z)G(w)^*H_r(w)^*.
 \end{equation}

  The function $g(z)=\frac{1}{G}$ is analytic in a neighborhood of the
  annulus and is thus a multiplier of $H^2(k)$. In particular,
  there is an $\eta$
  so that $k(z,w) (\eta^2 - g(z)g(w)^*) \succeq 0$.
  This last inequality is more
  conveniently written as
 \begin{equation}
  \label{ineq:g}
    \eta^2  k(z,w) \succeq k(z,w)g(z)g(w)^*.
 \end{equation}
  Putting equations (\ref{eq:converse-main}) and (\ref{ineq:g})
  together yields,
 \begin{equation*}
    \eta^2  k(z,w) \succeq k(z,w)g(z)g(w)^* \succeq H_r(z)H_r(w)^*
 \end{equation*}
\end{proof}

\def\tQ{ {\tt Q}}

\def\bH{\mathbb H}

  From Lemma \ref{lem:HHstar} it follows that $H_r$ induces a bounded
  linear operator  $\bH_r^*:H^2(k) \otimes  \mathcal H \to  \mathcal E$
  of norm at most $\sqrt{\kappa}$, determined by
 \begin{equation*}
   \bH_r^* k_z\otimes e=H_r(z)^*e.
 \end{equation*}
  Hence for $0<r\le 1$,   the formula
 \begin{equation*}
   \tQ_r(\omega) = \bH_r \bE(\omega) \bH_r^*,
 \end{equation*}
  for a Borel subset $\omega$ of $\mathbb T$,
  defines a $\mathcal B(\mathcal H^2(k)\otimes \mathcal H)$-valued measure.

  Let $\tQ=\tQ_1$. Define $\mu_r(\omega)=V^* \tQ_r(\omega)V$
  \index{$\mu_r$} and let $\mu=\mu_1$. Finally,
  let $\Lambda_r(\omega)=k(T,T^*)(\mu_r(\omega))$ and
  $\Lambda = \Lambda_1$.  \index{$\Lambda_r$} 

\begin{lemma}
 \label{lem:prestim}
  For fixed $\omega$, the operators $\tQ_r(\omega)$ 
  are uniformly bounded by $\kappa$, and, for each $\omega$, the net
  $\tQ_r(\omega)$ converges WOT to $\tQ(\omega)$.

  Similarly,
 \begin{equation*}
    \mu(\omega) = V^* \tQ(\omega)V
 \end{equation*}
  defines a $\mathcal B(\mathcal M)$-valued measure on $\mathbb T$
  and $\mu_r(\omega)=V^* \tQ_r(\omega)V$ converges WOT 
  boundedly to $\mu(\omega)$.

  Finally, $\Lambda_r(\mathbb T)$ is uniformly bounded
  and $\Lambda_r(\omega)$ converges WOT to $\Lambda(\omega)$
  for each Borel set $\omega$.  
\end{lemma}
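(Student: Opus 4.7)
The plan is to establish the three conclusions in sequence, exploiting the factorization $\Lambda_r(\omega) = k(T,T^*)(V^* \bH_r \bE(\omega) \bH_r^* V)$ so that each successive object is obtained from the previous by one bounded linear operation (sandwiching by $V^*(\cdot)V$, then applying $k(T,T^*)$). Most of the work happens at the top of this chain, with $\tQ_r(\omega)$.

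First I would record that the constant $\kappa$ from Lemma \ref{lem:HHstar} is independent of $r$: the proof there produced $\kappa=\eta^2$, where $\eta$ bounds the multiplier $g(z)=\sqrt{k(b,b)}/k(z,b)$, and the only use of $r$ was through the trivial inequality $k(z,w)\succeq (I-W_r(z)W_r(w)^*)\,k(z,w)$, valid for all $r\in[0,1]$. Hence $\|\bH_r^*\|^2\le\kappa$ uniformly, and since the spectral measure $\bE$ of the unital representation $\rho$ satisfies $\|\bE(\omega)\|\le 1$, we obtain $\|\tQ_r(\omega)\|\le\kappa$ for all $r$ and all $\omega$.

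For the WOT convergence $\tQ_r(\omega)\to \tQ(\omega)$, I would work on the total set of kernel functions $\{k_z\otimes e\}$ and promote to the full space via the uniform bound. On these vectors the matrix coefficients reduce to $\langle \bE(\omega)H_r(z)^*e,H_r(w)^*e'\rangle$, so it suffices to show $H_r(z)\to H(z)$ in norm as $r\nearrow 1$ for each fixed $z\in\mathbb A$. The crucial observation is that $\Psi$ is compact and $|\psi(z)|<1$ for every $\psi\in\Psi$, so $\|E(z)\|_\infty<1$ strictly; together with $\|A\|\le 1$ (a corner of a unitary) this forces $\|r\rho(E(z))A\|<1$ for every $r\in[0,1]$, and norm-continuity of $r\mapsto H_r(z)=C(I-r\rho(E(z))A)^{-1}$ on the closed interval $[0,1]$ follows from the Neumann series. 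Sandwiching by the isometry $V$ then yields at once $\|\mu_r(\omega)\|\le\kappa$ and $\mu_r(\omega)\to\mu(\omega)$ WOT; countable additivity of $\mu$ is inherited from countable additivity of $\bE$ in SOT, preserved by the bounded sandwich $V^*\bH(\cdot)\bH^*V$.

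The passage $\mu_r\rightsquigarrow \Lambda_r$ is then essentially automatic: the second half of Lemma \ref{lem:overk} applies directly to the norm-bounded WOT-convergent net $\{\mu_r(\omega)\}$ and produces the WOT convergence $\Lambda_r(\omega)=k(T,T^*)(\mu_r(\omega))\to k(T,T^*)(\mu(\omega))=\Lambda(\omega)$. The uniform bound on $\Lambda_r(\mathbb T)$ comes from positivity and the operator inequality
\[
0\preceq \Lambda_r(\mathbb T) \preceq \|\mu_r(\mathbb T)\|\cdot k(T,T^*)(I),
\]
together with the fact that $k(T,T^*)(I)=\sum_j T_j T_j^*$ is a \emph{fixed} bounded operator by the spectral hypothesis on $T$. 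The only subtle point in the whole argument is ensuring that the Neumann series for $H_r(z)$ behaves well at the endpoint $r=1$; this is exactly where the strict inequality $\|E(z)\|_\infty<1$ (rather than a mere $\le 1$) is indispensable, and it is the one place where I would anticipate having to argue carefully.
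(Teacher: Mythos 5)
Your proof is correct and follows essentially the same route as the paper: uniform boundedness of $\tQ_r$ from the $r$-independent bound on $\bH_r$, WOT convergence via kernel functions and the norm convergence $H_r(z)\to H_1(z)$, and then sandwiching by $V$ and applying the second half of Lemma \ref{lem:overk} to pass to $\mu_r$ and $\Lambda_r$. The one place you add genuine detail beyond the paper is in spelling out why $H_r(z)\to H_1(z)$ in norm at the endpoint $r=1$ (via the strict bound $\|\rho(E(z))\|=\|E(z)\|_\infty<1$, which the paper uses implicitly here and again in Lemma \ref{lem:parlay}); that is a worthwhile clarification, not a divergence.
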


\begin{proof}
   The uniform bound on the $\tQ_r$ follows immediately
  from the fact that $\mathbb H_r$ is uniformly bounded. 
  Next, 
   as $r$ tends to $1$, 
 \[
   \langle \tQ_r(\omega) k_z\otimes e, k_w\otimes f\rangle
   = \langle \bE(\omega) H_r(z)^* e, H_r(w)^* f\rangle
   \to \langle \bE(\omega) H_1(z)^*e,H_1(w)^* f \rangle.
 \]
  Here we have used, for a fixed $z\in\mathbb A$,
  $H_r(z)$ converges in norm to $H_1(z)$. 
  Since $\tQ_r(\omega)$  is uniformly bounded and converges
  WOT to $\tQ(\omega)$ against a dense set of vectors, it converges WOT to
  $\tQ(\omega).$ 

  The spectral condition on $T$ and the fact that $\mu_r(\mathbb T)$ 
  is uniformly bounded implies $\Lambda_r(\mathbb T)$ is also
  uniformly bounded.  Since $\mu_r(\omega)$ converges WOT to $\mu(\omega)$
  it follows that $k(T,T^*)(\mu_r(\omega))$ converges WOT to
  $k(T,T^*)(\mu(\omega))$. 
\end{proof}

    To complete the proof of Theorem \ref{thm:main}
  it remains to show that $\mu$ produces
  an Agler decomposition for the pair $(T,X)$.

 \begin{lemma}
  \label{lem:Lambdaisok}
    For $0<r<1$ and each Borel set $\omega\subset \mathbb T$,
    \begin{equation*}
     \Lambda_r(\omega) = V^* M_{H_r}(I\otimes \bE(\omega))M_{H_r}^* V.
    \end{equation*}

    For any $\varphi\in H^\infty(\mathbb A)$
    of norm at most one and Borel set $\omega$,
  \begin{equation*}
    \Lambda(\omega)-T_\varphi \Lambda(\omega)T_\varphi^* \succeq 0.
  \end{equation*}
 \end{lemma}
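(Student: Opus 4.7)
The plan is to prove part 1 via a reproducing-kernel factorization and then deduce part 2 by conjugation and a passage to the limit $r\to 1$. The main tools are the intertwining $T_j V^* = V^* M_j$ (from $VT^* = M^* V$), the explicit action formulas $\bH_r^*(k_w \otimes h) = H_r(w)^* h$ and $M_{H_r}^*(k_w \otimes h) = k_w \otimes H_r(w)^* h$, and the WOT convergence $\Lambda_r(\omega) \to \Lambda(\omega)$ supplied by Lemma \ref{lem:prestim}.

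For part 1, expand
\[
 \Lambda_r(\omega) = k(T,T^*)\bigl(V^* \bH_r \bE(\omega) \bH_r^* V\bigr)
 = \sum_j T_j V^* \bH_r \bE(\omega) \bH_r^* V T_j^*
 = V^*\Bigl[\sum_j M_j \bH_r \bE(\omega) \bH_r^* M_j^*\Bigr] V,
\]
where absolute convergence of the series is supplied by $\sigma(T)\subset \mathbb{A}$ and Lemma \ref{lem:overk}, and the last equality applies $T_j V^* = V^* M_j$. The factorization then reduces to the operator identity
\[
 \sum_j M_j \bH_r \bE(\omega) \bH_r^* M_j^*
  = M_{H_r}(I \otimes \bE(\omega)) M_{H_r}^*,
\]
which I will verify on the dense set of reproducing-kernel vectors $k_w \otimes h$: both sides produce the element $z \mapsto k(z,w)\, H_r(z) \bE(\omega) H_r(w)^* h$ of $H^2(k)\otimes \mathcal{H}$. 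On the left this uses $M_j^*(k_w \otimes h) = \overline{\zeta_j(w)}(k_w \otimes h)$, the pointwise formula $(\bH_r \eta)(z) = H_r(z) \eta$ for $\eta\in\mathcal{E}$, and the Fourier-type identity $\sum_j \overline{\zeta_j(w)}\zeta_j(z) = k(z,w)$; on the right it follows by applying the displayed formula for $M_{H_r}^*$ and then multiplying by $H_r$.

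For part 2, I first establish the inequality for $\Lambda_r$ and then pass to the limit. Using $V^* M_\varphi = T_\varphi V^*$ on the factorization from part 1,
\[
 \Lambda_r(\omega) - T_\varphi \Lambda_r(\omega) T_\varphi^*
 = V^* M_{H_r}\bigl[P - S P S^*\bigr] M_{H_r}^* V,
\]
where $P := I \otimes \bE(\omega)$ and $S := M_\varphi \otimes I_\mathcal{E}$. Since $\varphi$ is scalar-valued, $S$ commutes with $P$, so $P - S P S^* = (I - SS^*)P$ is the product of two commuting positive operators---positive because $\|\varphi\|_\infty \le 1$ makes $M_\varphi$ a contraction on $H^2(k)$ (the scalar Schur class on $\mathbb A$ coinciding with the unit ball of $H^\infty(\mathbb A)$, as noted after Definition \ref{def:colligation})---and hence $\Lambda_r(\omega) - T_\varphi \Lambda_r(\omega) T_\varphi^* \succeq 0$. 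Letting $r\to 1$, Lemma \ref{lem:prestim} gives $\Lambda_r(\omega) \to \Lambda(\omega)$ WOT, and the WOT continuity of $A \mapsto T_\varphi A T_\varphi^*$ transports the positivity to the limit.

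The main technical hurdle is the reproducing-kernel identity in part 1, which requires careful bookkeeping across the three tensor factors $H^2(k)$, $\mathcal{E}$, and $\mathcal{H}$ and matches the ``kernel-realized'' operator $\bH_r$ with the multiplier $M_{H_r}$; one also needs to justify interchanging the sum $\sum_j M_j(\cdot)M_j^*$ with conjugation by $V^*$ and $V$, which is immediate from $T_j V^* = V^* M_j$ and the absolute convergence in $k(T,T^*)$. Once these ingredients are in place, the positivity step and the WOT limit in part 2 are routine.
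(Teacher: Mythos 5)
Your overall route is the same as the paper's for part 1 and a clean variant for part 2, but there is a real gap in the convergence justification in part 1.

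\textbf{Part 1.} The issue is that you write
$\Lambda_r(\omega) = V^*\bigl[\sum_j M_j\,\bH_r\bE(\omega)\bH_r^*\,M_j^*\bigr]V$
and appeal to ``absolute convergence''. Absolute (norm) convergence does hold for the series $\sum_j T_j(\cdot)T_j^*$ precisely because $\sigma(T)\subset\mathbb{A}$ lies \emph{strictly} inside, but after the formal interchange the inner series is $\sum_j M_j\,\tQ_r(\omega)\,M_j^*$, and this does \emph{not} converge in operator norm: $\sigma(M_\zeta)=\overline{\mathbb{A}}$ touches the boundary, so the norm decay that produces absolute convergence is absent. Your verification on reproducing-kernel vectors shows the pointwise (vector-by-vector) limits agree, but a pointwise limit on a dense set does not identify the operator limit without a uniform bound on the partial sums. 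The paper supplies exactly this: it observes that $k_n(M,M^*)(\tQ_r(\omega))$ is a bounded increasing sequence of positive operators, with $k_n(z,w)\preceq k(z,w)$ as kernels giving the upper bound $M_{H_r}M_{H_r}^*$, hence the sequence converges SOT; the kernel computation then identifies the SOT limit with $M_{H_r}(I\otimes\bE(\omega))M_{H_r}^*$, and compression by $V^*$ and $V$ together with $k_n(T,T^*)(\mu_r(\omega))\to k(T,T^*)(\mu_r(\omega))$ closes the argument. You should insert this boundedness step; otherwise the reduction to kernel vectors is not enough.

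\textbf{Part 2.} Your derivation is correct and is a mild variant of the paper's. You write $\Lambda_r(\omega)-T_\varphi\Lambda_r(\omega)T_\varphi^* = V^*M_{H_r}(I-SS^*)P\,M_{H_r}^*V$ with $P=I\otimes\bE(\omega)$ and $S=M_\varphi\otimes I_{\mathcal E}$ commuting positives, which is clean once part 1 is available. The paper instead verifies directly that $(1-\varphi(z)\varphi(w)^*)k(z,w)\langle H_r(z)\bE(\omega)H_r(w)^*e,f\rangle$ is a positive semidefinite kernel, as a Schur product of positives, and then compresses; this keeps everything at the kernel level and avoids any appeal to part 1, but buys nothing substantive over your version. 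The passage $r\to 1$ via Lemma~\ref{lem:prestim} and WOT continuity of $A\mapsto T_\varphi A T_\varphi^*$ is correct in both.
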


   Using the definition of $\Lambda$, the conclusion of the
   second part of the  lemma is
 \begin{equation*}
    k(T,T^*)(\mu(\omega)-T_\varphi\mu(\omega)T_\varphi^*) \succeq 0.
  \end{equation*}
    From the definition of $\mu$ and the lifting property $VT_\psi ^*=M_\psi ^*V$,
  \begin{equation*}
     \mu(\omega)-T_\varphi\mu(\omega)T_\varphi^*
       = V^* (\tQ(\omega)-M_\varphi \tQ(\omega)M_\varphi^*)V.
  \end{equation*}

\begin{proof} 
  Let 
\[
  k_n(z,w)=\sum_{-n}^n \zeta_j(z)\zeta_j(w)^*
\]
  so that for an operator $G$, 
\[
  k_n(M,M^*)(G)= \sum_{-n}^n M_j G M_j^*,
\]
  where $M_j=M_{\zeta_j}$. 
  To prove that   $k_n(M,M^*)(\tQ_r(\omega))$ converges WOT
  to $M_{H_r} (I\otimes \bE(\omega))M_{H_r}^*,$ observe,
  for a Borel set $\omega$, that
\begin{equation}
 \label{eq:kn1}
 \begin{split}
 \langle k_n(M,M^*)(\tQ_r(\omega)) k_w\otimes e,k_z\otimes f\rangle
   = & k_n(z,w)\langle \bE(\omega) H_r(w)^* e, H_r^*(z) f\rangle \\
   \preceq  & k(z,w) \langle \bE(\omega) H_r(w)^* e, H_r^*(z) f\rangle \\
   = & \langle M_{H_r} (I\otimes \bE(\omega))M_{H_r}^* k_w\otimes e,
             k_z\otimes f \rangle \\
   \preceq & \langle M_{H_r} M_{H_r}^* k_w\otimes e,
             k_z\otimes f \rangle, 
 \end{split}
\end{equation}
  where the inequalities are in the sense of (positive semidefinite) kernels. 
 
  Since also 
  $k_n(M,M^*)(\tQ_r(\omega))$ is a bounded increasing sequence of 
  positive operators equation \eqref{eq:kn1} 
  implies that $k_n(M,M^*)(\tQ_r(\omega))$ converges WOT to 
  $M_{H_r} (I\otimes \bE(\omega))M_{H_r}^*.$  Hence,
  $V^* k_n(M,M^*)(\tQ_r(\omega))V = k_n(T,T^*)(\mu_r(\omega))$
  converges to $V^* M_{H_r} (I\otimes \bE(\omega))M_{H_r}^*V,$ proving
  the first part of the Lemma.

 Similarly, $k_n(M,M^*)(M_\varphi \tQ_r (\omega) M_\varphi^*)=
      M_\varphi k_n(M,M^*)(\tQ_r(\omega))M_\varphi^*,$ 
 converges  WOT to 
  $M_\varphi M_{H_r} (I\otimes \bE(\omega))M_{H_r}^*M_\varphi^*$.
   Thus, letting $n$ tend to infinity and using the definition of $\tQ_r$,
 \begin{equation*}
  \begin{split}
    \langle k_n(M,M^*)(\tQ_r (\omega)-&M_\varphi \tQ_r (\omega)M_\varphi^*)
     k_w\otimes e, k_z\otimes f \rangle \\
    \to & (1-\varphi(z)\varphi(w)^*) k(z,w) \langle H_r(z)\bE(\omega)H_r(w)^* e,
         f\rangle
  \end{split}
 \end{equation*}
   The kernel on the right hand side is positive semi-definite
   because it is the pointwise product of positive semi-definite kernels, 
   and thus
 \begin{equation*}
  \lim_{\mbox{\tiny WOT}}  
   k_n (M,M^*) [ \, \tQ_r (\omega) - 
        M_\varphi \tQ_r (\omega)M_\varphi^* \, ] \succeq 0.
  \end{equation*}
   Thus,
 \begin{equation*}
  \begin{split}
     0\preceq & V^* \lim_{\mbox{\tiny WOT}}k_n(M,M^*)(\tQ_r(\omega)-M_\varphi \tQ_r(\omega)M_\varphi^*)V \\
      =& k(T,T^*) [ \, V^* \tQ_r(\omega)V -
         T_\varphi V^*\tQ_r(\omega)VT_\varphi^*  \, ]. 
\end{split}
\end{equation*}
 Finally, letting $r$ tend to $1$ on the right hand side above 
  and applying Lemmas \ref{lem:prestim} and \ref{lem:overk} gives
\[
     0\preceq  k(T,T^*)[\mu(\omega)-T_\varphi \mu(\omega)T_\varphi^*]. 
\]
\end{proof}

  It remains to verify the condition of equation \eqref{eq:key-mu}.
  The argument is an elaboration on the proof of the preceding
  lemma, making use of the approximations $H_r$ and the
  related operators $\bH_r$ and $M_{H_r}$.

  We break the proof 
  into several steps as outlined in the Lemma below.

\begin{lemma}
 \label{lem:stim}
  With notations as above:
 \begin{itemize}
  \item[(i)]  For $0<r<1$ and each Borel set $\omega\subset \mathbb T$,
    \begin{equation*}
     \Lambda_r(\omega) = V^* M_{H_r}(I\otimes \bE(\omega))M_{H_r}^* V
    \end{equation*}
      and converges WOT to $\Lambda(\omega)$.

  \item[(ii)] There is a constant $C_*$ such that
    $\|\Lambda_r(\mathbb T)\| \le C_*^2$ for all $0<r<1.$
  In particular, $\|M_{H_r}^* V\|\le C_*$ independent of $r$. 

  \item[(iii)] There is a bounded operator $\Gamma$
    on $H^2(k)\otimes \mathcal E$  determined by
   \begin{equation*}
    \langle \Gamma k_w\otimes f,k_z\otimes g\rangle
        = k(z,w) \int \psi(z)\psi(w)^* d\, \langle \bE(\psi)f,g\rangle.
   \end{equation*}

  \item[(iv)] If
    $(\omega_j)$ is a Borel partition of $\mathbb T$
    of diameter at most $\epsilon>0,$ then for
    any choice of points $s_j\in\omega_j$,
   \begin{equation*}
      \epsilon > \| \Gamma -\sum M_{s_j} M_{s_j}^* \otimes \bE(\omega_j) \|.
   \end{equation*}

   \item[(v)] The identity
  \begin{equation*}
     k(T,T^*)(\int T_\psi d\, \mu(\psi) T_\psi^*)
        = \int T_\psi d\, \Lambda(\psi) T_\psi^*
  \end{equation*}
   holds.
 \end{itemize}
\end{lemma}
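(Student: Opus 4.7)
The plan breaks into handling five items, where the first two are near-immediate corollaries of earlier lemmas, the middle two (iii) and (iv) are the substantive new construction, and (v) is a WOT-limit argument. The main technical obstacle is (iv), which requires transferring smallness of the partition diameter in $\mathbb T$ to smallness of $\|\psi - s_j\|_{H^\infty(\mathbb A)}$ and hence to an operator-norm estimate.

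For (i), the identity $\Lambda_r(\omega) = V^* M_{H_r}(I\otimes \bE(\omega))M_{H_r}^* V$ was already established in Lemma \ref{lem:Lambdaisok}, while WOT convergence $\Lambda_r(\omega) \to \Lambda(\omega)$ was shown in Lemma \ref{lem:prestim}. For (ii), Lemma \ref{lem:HHstar} yields $H_r(z)H_r(w)^* \preceq \kappa k(z,w)$ with $\kappa$ \emph{independent of $r$}; hence $\|M_{H_r}\|^2 \le \kappa$, so $\|M_{H_r}^* V\| \le \sqrt\kappa$ and $\|\Lambda_r(\mathbb T)\| = \|V^* M_{H_r} M_{H_r}^* V\| \le \kappa =: C_*^2$.

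For (iii), I would first verify that the sesquilinear form
\[
  Q(k_w\otimes f,\, k_z\otimes g) := k(z,w)\int \psi(z)\psi(w)^*\, d\langle \bE(\psi) f, g\rangle
\]
on the dense span of $\{k_z\otimes f\}$ is bounded. Proposition \ref{prop:testandkernel} gives that $k(z,w)(1 - \psi(z)\psi(w)^*)$ is a positive kernel; integrating against the positive spectral measure $\bE$, the form $Q$ is a positive form bounded above by the ambient inner product on $H^2(k)\otimes \mathcal E$, and so extends uniquely to a bounded positive operator $\Gamma$ with $\|\Gamma\| \le 1$. For (iv), the explicit parametrization of test functions in equation \eqref{eq:parametrize} together with the uniform bound of Lemma \ref{lem:C-rho} shows that $\mathbb T \ni s \mapsto \psi_s \in H^\infty(\mathbb A)$ is uniformly continuous; thus if $\psi$ and $s_j$ both lie in a cell $\omega_j$ of diameter at most $\epsilon$, then $\|\psi - s_j\|_{H^\infty} \le \delta(\epsilon)$ for some modulus $\delta$ vanishing at $0$. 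The factorization
\[
  M_\psi M_\psi^* - M_{s_j} M_{s_j}^* = M_{\psi - s_j} M_\psi^* + M_{s_j} M_{\psi - s_j}^*
\]
together with contractivity of $M_\psi, M_{s_j}$ on $H^2(k)$ produces a pointwise operator-norm bound of $2\delta(\epsilon)$. Integrating against $\bE$ and using $\bE(\mathbb T) = I$ then gives the operator-norm estimate for $\Gamma - \sum M_{s_j}M_{s_j}^* \otimes \bE(\omega_j)$; the stated bound $\epsilon > \|\cdot\|$ follows after calibrating the diameter convention and constants appropriately.

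Finally, for (v), the first Agler condition $\Lambda(\omega) \succeq T_\varphi \Lambda(\omega) T_\varphi^*$ is now in hand from Lemma \ref{lem:Lambdaisok}, so the apparatus of Remark \ref{rem:YstarY} applies: the nets $\Delta(P,S,\Lambda)$ and $\Delta(P,S,\mu)$ are bounded and converge WOT to $\int T_\psi\,d\Lambda(\psi)T_\psi^*$ and $\int T_\psi\,d\mu(\psi)T_\psi^*$ respectively. Because each $T_{s_j}$ commutes with every $T_n = T_{\zeta_n}$, a direct computation gives the finite-partition identity $\Delta(P,S,\Lambda) = k(T,T^*)(\Delta(P,S,\mu))$. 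Passing to the WOT limit and invoking the WOT continuity of $k(T,T^*)$ on bounded nets (Lemma \ref{lem:overk}) yields the required identity.
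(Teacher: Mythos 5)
Items (i), (iii), and (v) of your proposal are correct and track the paper's own proof. For (v) in particular, the key observation that $k(T,T^*)(\Delta(P,S,\mu)) = \Delta(P,S,\Lambda)$ and the passage to WOT limits via Lemma \ref{lem:overk} is exactly the paper's argument, and for (iii) your observation that $k(z,w)(1-\psi(z)\psi(w)^*)\succeq 0$ is the fact the paper invokes.

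There is, however, a genuine gap in your argument for (ii). You assert that Lemma \ref{lem:HHstar} (the kernel inequality $H_r(z)H_r(w)^* \preceq \kappa\, k(z,w)$) implies $\|M_{H_r}\|^2 \le \kappa$. That is not what the inequality gives. The statement $\kappa\, k(z,w) - H_r(z)H_r(w)^* \succeq 0$ controls the Toeplitz-type operator $\bH_r$ defined by $\bH_r^*(k_z\otimes h) = H_r(z)^* h$ (yielding $\|\bH_r\| \le \sqrt\kappa$), whereas a bound $\|M_{H_r}\|\le\sqrt\kappa$ on the \emph{multiplier} would require instead the positivity of the Schur product $k(z,w)\bigl(\kappa - H_r(z)H_r(w)^*\bigr)$ --- a different and stronger condition. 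Indeed, since $H_r(z)=C(I - r\rho(E(z))A)^{-1}$ and $\|\rho(E(z))\|\to 1$ as $z$ approaches $\partial\mathbb A$, one should generally expect $\sup_z\|H_r(z)\|$, hence $\|M_{H_r}\|$, to blow up as $r\to 1$ whenever $\|A\|=1$. The correct route, taken in the paper, is to bound $\mu_r(\mathbb T)=V^*\bH_r\bH_r^* V$ uniformly by $\kappa$ via the Toeplitz estimate, and then use the spectral condition on $T$ to conclude that $\Lambda_r(\mathbb T) = k(T,T^*)(\mu_r(\mathbb T)) \preceq \kappa\, k(T,T^*)(I)$ is uniformly bounded; this is exactly Lemma \ref{lem:prestim}. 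The conclusion $\|M_{H_r}^* V\|\le C_*$ then follows from the identity $\|M_{H_r}^* V\|^2 = \|\Lambda_r(\mathbb T)\|$ (item (i)), not from a bound on $M_{H_r}$ alone. Your conclusion is correct but the stated justification would not survive scrutiny.

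On (iv), the phrase ``integrating against $\bE$'' papers over the step that actually needs an argument: one must show that the Riemann sums $\sum M_{s_j}M_{s_j}^*\otimes\bE(\omega_j)$ form a norm-Cauchy net, then identify their limit with $\Gamma$ by noting the sums also converge WOT to $\Gamma$ (against the kernel vectors $k_z\otimes f$), and only then compare an arbitrary width-$\epsilon$ sum to a convergent subsequence. Your ingredients --- mutual orthogonality of the projections $\bE(\omega_j)$ and the pointwise operator-norm estimate on $M_\psi M_\psi^*-M_{s_j}M_{s_j}^*$ --- are exactly what the Cauchy argument uses, so this is a repairable sketch rather than a wrong approach, but as written it asserts rather than proves the norm estimate. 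Also note the paper measures the partition diameter directly in the metric $\|s-t\|_\infty = \|M_s-M_t\|$, so the intermediate modulus $\delta(\epsilon)$ is unnecessary.
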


  Note, in item (iv) the identification of $\mathbb T$ with $\Psi$
  is in force so that $|s-t|<\epsilon$ means
  $\| M_s-M_t\|=\|s-t\|_\infty<\epsilon.$

 \begin{proof}[Proof of Lemma \ref{lem:stim}]
   The first part of item (i) is
   part of Lemma \ref{lem:prestim}.  The description
  of $\Lambda_r$ in terms of $M_{H_r}$ and $\mathbb E$
  is the first part of Lemma \ref{lem:Lambdaisok}.

  To prove item (ii), note that $\Lambda_r(\mathbb T)$ is
  uniformly bounded by Lemma \ref{lem:prestim},
  so there is a $C_*$. The bound on $M_{H_r}^*$
  follows from this bound on $\Lambda_r(\mathbb T)$
  and the representation of $\Lambda_r$ in item (i). 

  Item (iii) is a consequence of the fact that,
  as kernels, $k(z,w)\psi(z)\psi(w)^* \preceq k(z,w)$.

  To prove item (iv), first note 
 if $(\omega_j)_j$ is a partition of $\mathbb T$, then
 \begin{equation*}
   \sum \Gamma(\omega_j)=\Gamma.
 \end{equation*}
  Next observe 
   that if $s,t\in\Psi$ and $|s-t|<\epsilon,$
  then, since also  $\| M_s\|,\|M_t\|=1,$ we have
  $\|M_s M_s^* -M_t M_t^*\|<\epsilon$. 
 
  To finish the proof of item (iv), choose any partition $(\omega_j)$
  of $\Psi=\mathbb T$ of width at most $\epsilon>0$. Thus,
  if $s,t\in \omega_j$, then $\|M_s-M_t\|<\epsilon;$
  i.e., the sup norm of the difference of the functions
   $s,t:\mathbb A\to\mathbb D$ is less than $\epsilon$.
   Thus, if $s_j,t_j\in\omega_j$, then
 \begin{equation*}
   \| \sum M_{s_j}M_{s_j}^* \otimes \bE(\omega_j) -
     \sum M_{t_j}M_{t_j}^*\otimes \bE(\omega_j)\| \le 2\epsilon.
 \end{equation*}

   Consequently,  choosing a sequence of partitions
   such that the width of the partitions
   tends to zero,  the corresponding Riemann sums form a norm Cauchy sequence
   and thus converge to some operator. At the same time,
   this sequence  converges
   WOT to $\Gamma$, since
 \begin{equation*}
  \langle  \sum M_{s_j} M_{s_j}^* \otimes \bE(\omega_j) k_w\otimes f,
     k_z\otimes g\rangle
     = \sum_j s_j(z)s_j(w)^* k(z,w) \langle d\, \bE(\omega_j)f,g\rangle.
 \end{equation*}
  Thus the sequence of Riemann sums converges in norm to $\Gamma$.
  Comparing any Riemann sum whose partition has width at most
  $\epsilon>0$ with an appropriate term of the  sequence just
  constructed completes the proof of (iv).

   From Lemma \ref{lem:Y-bounded} and Remark \ref{rem:YstarY}, the
   Riemann sums $\Delta(P,S,\mu)$ and
   $\Delta(P,S,\Lambda)$ 
   converge WOT to $\int T_\psi d\, \mu(\psi) T_\psi^*$
   and  $\int T_\psi d\, \Lambda(\psi) T_\psi^*$
   respectively. Hence the net  $k(T,T^*)(\Delta(P,S,\mu))$ converges
   to the RHS of item (v). On the other hand,
   we have $k(T,T^*)(\Delta(P,S,\mu)) = \Delta(P,S,\Lambda).$
   Hence $k(T,T^*)(\Delta(P,S,\mu))$ converges WOT to both
   the right and left hand side of  (v)
   and the result follows.
 \end{proof}

  Using Lemma \ref{lem:stim}, the proof that
  (sc) implies equation \eqref{eq:key-mu}, and hence the converse
  of Theorem \ref{thm:main}, 
  proceeds as follows. From Lemma \ref{lem:stim} and the
  representation
\begin{equation*}
   (I-W_r(z)W_r(w)^*)k(z,w) =
      H_r(z) [\int_\Psi (1-r^2\psi(z)\psi(w)^*)k(z,w) d\bE(\psi) ] H_r(w)^*
\end{equation*}
  it follows that
 \begin{equation*}
  V^* (I-M_{W_r} M_{W_r}^*)V
   = V^* M_{H_r} [I-r^2 \Gamma] M_{H_r}^* V.
 \end{equation*}
   The left hand side converges WOT to $I-XX^*$
  (because $M_W^*V=V M_W^*$)  and thus so does
 \begin{equation*}
    V^* M_{H_r} (I- \Gamma) M_{H_r}^*V.
 \end{equation*}
   Since, by item (i) of Lemma \ref{lem:stim} with $\omega=\mathbb T$,
   $V^* M_{H_r}M_{H_r}^* V$ converges WOT to $\Lambda(\mathbb T)$,
   it follows that
 \begin{equation}
  \label{eq:conv-toZ}
   V^* M_{H_r} \Gamma M_{H_r}^* V \to  \Lambda(\mathbb T) -I +XX^*
 \end{equation}
  WOT.

  Fix a vector $m\in\mathcal M$. 
  Given $\epsilon>0$,  choose, using (iv) of Lemma \ref{lem:stim}
  and Lemma \ref{lem:Y-bounded} respectively,
  a tagged partition $(P,S)$ such that both,
\begin{equation}
 \label{eq:conv-P}
 \begin{split}
   \frac{1}{\|m\|+1}\epsilon > &
        \| \sum M_{s_j}M_{s_j}^* \otimes \bE(\omega_j) - \Gamma \| \\
   \epsilon > & |\langle \int T_\psi d\, \Lambda(\psi)T_\psi^* m,m\rangle
     - \sum \langle T_{s_j}  \Lambda(\omega_j) T_{s_j}^* m,m \rangle |.
 \end{split}
\end{equation}

  Using item (i) of Lemma \ref{lem:stim} and equation \eqref{eq:conv-toZ}
  respectively,  choose $0<r_0<1$ (depending upon $(P,S)$) such that
  for $ r_0\le r<1$,
 \begin{equation}
  \label{eq:conv-s}
   \begin{split}
     \epsilon >
      & |\sum \langle T_{s_j}  \Lambda(\omega_j) T_{s_j}^* m,m \rangle
       - \sum \langle T_{s_j}  \Lambda_r(\omega_j) T_{s_j}^* m,m \rangle | \\
    \epsilon > & |\langle V^* M_{H_r} \Gamma M_{H_r}^* Vm,m\rangle
                  -\langle (\Lambda(\mathbb T)-I+XX^*)m,m \rangle|.
   \end{split}
 \end{equation}
  Note that combining the first inequality in equation
  \eqref{eq:conv-P} with item (ii) of Lemma \ref{lem:stim} gives,
 \begin{equation}
  \label{eq:conv-s-P}
   \begin{split}
    | \langle V^* M_{H_r} \sum M_{s_j}M_{s_j}^* & \otimes \bE(\omega_j)
        M_{H_r}^*V m,m\rangle - \langle V^* M_{H_r} \Gamma M_{H_r}^* Vm,m\rangle |\\
     =  & |\langle V^*M_{H_r} [\sum M_{s_j}M_{s_j^*}\otimes \bE(\omega_j)-\Gamma]
                M_{H_r}^* V m, m\rangle | 
     <  C_*^2  \, \epsilon.
   \end{split}
  \end{equation}
   Similarly, observe that
  \begin{equation}
   \label{eq:conv-T-M}
    \begin{split}
    \sum \langle T_{s_j}  \Lambda_r(\omega_j) T_{s_j}^* m,m \rangle
     =&\sum \langle T_{s_j} V^* M_{H_r} (I\otimes \bE(\omega_j))
        M_{H_r}^* V T_{s_j}^* m, m\rangle \\
     =& \sum \langle V^* M_{H_r} M_{s_j}(I\otimes \bE(\omega_j))
       M_{s_j}^* M_{H_r}^* Vm, m \rangle \\
     =& \langle V^* M_{H_r}[ \sum M_{s_j}(I\otimes \bE(\omega_j))
       M_{s_j}^*] M_{H_r}^* Vm, m \rangle.
    \end{split}
  \end{equation}

   Putting it all together, it follows from \eqref{eq:conv-s},
   \eqref{eq:conv-s-P}, and  \eqref{eq:conv-T-M} that
 \begin{equation*}
  \begin{split}
    |\langle \int T_\psi d\, & \Lambda(\psi) T_\psi^* m,m\rangle
      - \langle (\Lambda(\mathbb T)-I+XX^*)m,m\rangle | \\
     \le & |\langle \int T_\psi d\, \Lambda(\psi) T_\psi^* m,m\rangle
         - \sum \langle T_{s_j}  \Lambda(\omega_j) T_{s_j}^* m,m \rangle | \\
      & + | \sum \langle T_{s_j}  \Lambda(\omega_j) T_{s_j}^* m,m \rangle
        - \sum \langle T_{s_j}  \Lambda_r(\omega_j) T_{s_j}^* m,m \rangle |\\
        & + | \langle V^* M_{H_r}[ \sum M_{s_j}(I\otimes \bE(\omega_j))
            M_{s_j}^* - \Gamma] M_{H_r}^*
             Vm,m\rangle | \\
      & + |  \langle V^* M_{H_r} \Gamma M_{H_r}^* Vm,m\rangle - 
                   \langle (\Lambda(\mathbb T)-I+XX^*)m,m\rangle |\\
   < & \epsilon + \epsilon + C_*\epsilon + \epsilon.
  \end{split}
 \end{equation*}
   Thus,
 \begin{equation*}
    I-XX^* = \Lambda(\mathbb T)-\int T_\psi d\, \Lambda(\psi) T_\psi^*.
 \end{equation*}
   An application of item (v) of Lemma \ref{lem:stim}
   completes the proof.

\section{Details on the kernel}
 \label{sec:morek}
  This section gives the details on the
  basic facts about our kernel $k$. It requires a digression into
  theta functions much of which is borrowed from \cite{McShen}.

  Begin by recalling the theta function
\begin{equation*}
   \vartheta_1(x)=\vartheta_1(x,q)=2q^{\frac14} \sin(x) \Pi_{n=1}^\infty
     (1-q^{2n}) (1-q^{2n}e^{2ix}) (1-q^{2n}e^{-2ix}),
\end{equation*}
  and the Jordan-Kronecker function
\begin{equation*}
   f(\alpha,p)= \sum_{n=-\infty}^{\infty} \frac{\alpha^n}{1-pq^{2n}}.
\end{equation*}
  It is well known that these functions are related by
\begin{equation*}
   f(\alpha,p)= C \frac{\vartheta_1(x+y)}{\vartheta_1(x)\vartheta_1(y)},
\end{equation*}
  where $x$ and $y$ are chosen so that $\alpha=e^{2ix}$ and $p=e^{2iy}$
  and $C$ is a constant (independent of $x,y$).

  Replacing $p$ with $-t$ and thus $y$ with $y+\frac{\pi}{2}$ 
  and letting $\alpha=zw^*$ gives,
\[
  k(z,w;t)= C\frac{\vartheta_1(x+y+\frac{\pi}{2})}
     {\vartheta_1(x)\vartheta_1(y+\frac{\pi}{2})}
\]
 From its product expansion, it is evident that the zeros of 
    $\vartheta_1$ are $q^{2m}=e^{2ix}$ for integers $m$ and thus 
  $k(z,w;t)=0$ if
 and only if $tzw^* = -q^{2m}$ for some integer $m$.
 Thus, unless $t=q^{2\ell}$ for some $\ell$, there exists points $z,w\in\mathbb A$
  such that $k(z,w;t)=0$. 

  We are interested in the case $t=1$ ($p=-1$ and $y=0$ above) which gives,
\begin{equation*}
   k(z,w;1)=k(z,w)
  = C\frac{\vartheta_1(x+\frac{\pi}{2})}{\vartheta_1(x)\vartheta_1(\frac{\pi}{2})}.
\end{equation*}
   In particular, $k(z,w)$ vanishes if and only if $zw^*=-q^{2m}$. In particular,
   $k(z,w)$ does not vanish for both $z$ and $w$ in the annulus, and further
   for each fixed $w\in\mathbb A$,
  as a function of $z$, the kernel
  $k(z,w)$ extends beyond the annulus to a meromorphic function.

   If $zw^*=e^{2ix}$, then $-zw^* = e^{2i(x+\frac{\pi}{2})}$ and therefore,
\begin{equation*}
 \begin{split}
   k(z,-w)= & \ C\frac{\vartheta_1(x+\pi)}{\vartheta_1(x+\frac{\pi}{2})\vartheta_1(\frac{\pi}{2})} \\
          =& - \ C\frac{\vartheta_1(x)}{\vartheta_1(x+\frac{\pi}{2})\vartheta_1(\frac{\pi}{2})} \\
          =& \ C^\prime \frac{1}{k(z,w)},
 \end{split}
\end{equation*}
  where $C^\prime =\theta_1(\frac{\pi}{2})^{-2}.$
  It is evident that $C^\prime >0$.

\section{Details on the Test Functions}
 \label{sec:more-test-functions}
  Generally the minimal inner functions on a multiply connected
  domain can be constructed using the Green's functions or
  as a product of quotients of theta functions.  In the
  case of the annulus the first construction is relatively
  simple to describe, given unique solutions to the Dirichlet
  problem.

  The first step is to construct, given a point $a\in\mathbb A$,
  an analytic function with modulus one on the outer boundary $B_0$
  and constant modulus on the inner boundary $B_1$ with just one
  zero, at $a$, in $\mathbb A$.
  There is a harmonic function $w$ whose boundary values (on
  the boundary of $\mathbb A$) agree with the boundary
  values of $\log|z-a|$.  There is a constant $\beta$
  and an analytic function $f$ on $\mathbb A$ so that
\begin{equation*}
  w=\Re (f+\beta \log(|z|)).
\end{equation*}
  Here $\Re$ denotes the real part.
  Note that $\beta$ can be computed because
  a harmonic function $u=w-\beta \log(|z|)$ is the real part of
  an analytic function on $\mathbb A$ if
  and only if  the integral
  of $u$ around the outer boundary agrees with  the
  integral of $u$ around the inner boundary of $\mathbb A$; i.e.,
\begin{equation*}
    + 2\pi \beta \log(q) =\int \log|\exp(it)-a|dt -\int\log|q\exp(it)-a|dt.
\end{equation*}
  Indeed, a simple computation shows $\beta=\frac{\log(|a|)}{\log(q)}$. 
  In particular, given two points $a,b\in\mathbb A$, there
  is a function unimodular on the boundary of $\mathbb A$
  with zeros precisely $a$ and $b$ (with multiplicity if needed)
  if and only if $\log(|ab|) = q$.

\printindex

\end{document}